\DeclarePairedDelimiter{\set}{\{}{\}}
\renewcommand{\phi}{\varphi}
\newcommand{\mc}{\mathcal}
\newcommand{\nat}{\ensuremath{\mathbb{N}}}
\newcommand{\zah}{\ensuremath{\mathbb{Z}}}
\newcommand{\rea}{\ensuremath{\mathbb{R}}}
\DeclareMathOperator{\Homeo}{Homeo}
\DeclareMathOperator{\MCG}{MCG}
\DeclareMathOperator*{\bigntrl}{\scalerel*{\natural}{\sum}}
\newtheorem{theorem}{Theorem}[section]
\newtheorem{defn}[theorem]{Definition}
\newtheorem{lemma}[theorem]{Lemma}
\newtheorem{cor}[theorem]{Corollary}
\newtheorem{fact}[theorem]{Fact}
\newtheorem{remark}[theorem]{Remark}
\newtheorem{prop}[theorem]{Proposition}
\newtheorem*{namedtheorem}{\theoremname}
\newcommand{\theoremname}{testing}
\newenvironment{named}[1]{\renewcommand{\theoremname}{#1}\begin{namedtheorem}}{\end{namedtheorem}}
\newcommand{\loopg}{\mc{L}}
\newcommand{\curveg}{\mc{C}}
\newcommand{\scg}{\mc{TC}}
\title{Graphs of curves and arcs quasi-isometric to big mapping class groups}
\author{Anschel Schaffer-Cohen}
\begin{document}

\maketitle

\begin{abstract}
	Following the work of Rosendal and Mann and Rafi, we try to answer the following question: when is the mapping class group of an infinite-type surface quasi-isometric to a graph whose vertices are curves on that surface? With the assumption of tameness as defined by Mann and Rafi, we describe a necessary and sufficient condition, called translatability, for a geometrically nontrivial big mapping class group to admit such a quasi-isometry. In addition, we show that the mapping class group of the plane minus a Cantor set is quasi-isometric to the loop graph defined by Bavard, which we believe represents the first known example of a hyperbolic mapping class group that is not virtually free.
\end{abstract}

\section{Introduction}

For our purposes, a \emph{surface} is a connected, oriented $2$-manifold without boundary. A surface is said to have \emph{finite type} if it is homeomorphic to a compact surface minus a finite set of points, or equivalently if its fundamental group is finitely generated. All other surfaces are said to have \emph{infinite type}. The \emph{mapping class group} of a surface $\Sigma$, which we write as $\MCG(\Sigma)$, is the group of orientation-preserving homeomorphisms of $\Sigma$ considered up to isotopy, sometimes written $\Homeo^+(\Sigma)/\Homeo_0(\Sigma)$ or $\pi_0(\Homeo^+(\Sigma))$. When $\Sigma$ has infinite type, we call $\MCG(\Sigma)$ \emph{big} to distinguish it from the better-studied mapping class groups of finite-type surfaces.

One of the primary goals of geometric group theory is to study a group via its actions on metric spaces. In the case of a finitely generated group, the word metric on the group gives a coarse upper bound on the displacement of a point under any action of the group on a metric space. If in addition the action is geometric, the word metric on the group also gives a coarse \emph{lower} bound on the displacement, making the orbit map a quasi-isometric embedding; the addition of coarse transitivity makes the orbit map a true quasi-isometry.

Even without a quasi-isometry, it is often possible to extract data about the geometry of the group from the geometry of the space on which it acts. Most famously in the case of mapping class groups, Masur and Minsky \cite{mm2} gave estimates for the word length of a mapping class based on its action on a sequence of curve graphs.

In the case of big mapping class groups this classical approach cannot be directly applied, because they are not finitely generated---indeed, they are uncountable. Big mapping class groups do, however, have a non-trivial topology inherited from $\Homeo^+(\Sigma)$, so if we restrict our attention to \emph{continuous} actions on metric spaces some of the old tools become available in a new light. Rosendal \cite{rosendal} provides the framework for this viewpoint in the context of general topological groups, using the notion of \emph{coarse boundedness}, which generalizes the concept of finiteness from the discrete case---see Definition \ref{cb_def}. For instance, instead of studying discrete groups that admit a finite generating set, we can study groups that admit a coarsely bounded neighborhood of the identity and a coarsely bounded generating set. Crucially, Fact \ref{qi_gen} below gives us a well-defined quasi-isometry class for such a group.

A recent paper of Mann and Rafi \cite{mr} provides a thorough application of this idea to the area of big mapping class groups. In particular, the paper provides (under the technical condition of \emph{tameness}---see Definition \ref{tame_defn}) a classification of which infinite-type surfaces admit coarsely bounded identity neighborhoods and generating sets; these generating sets are given explicitly. A natural question then follows: given such a big mapping class group, which thus has a well-defined quasi-isometry type, is there a ``nice'' metric space to which it is quasi-isometric?

There is one obvious candidate: the Cayley graph of the group, with respect to the generating set found by Mann and Rafi. This graph is simplicial, and is by construction quasi-isometric to the group, but it has two significant drawbacks. First, it has uncountably many vertices, which might limit the application of some combinatorial methods. Second, its a posteriori definition makes it unlikely to produce insights not available by direct examination of the group itself.

Instead, we turn to the wealth of already-defined simplicial graphs that admit a continuous action of a big mapping class group. Examples include Bavard's \emph{loop graph} \cite{bavard}, Rasmussen's \emph{nonseparating curve graph} \cite{rasmussen}, the graphs of separating curves defined by Durham, Fanoni, and Vlamis \cite{dfv}, and the general curve and arc graphs defined by Aramayona, Fossas, and Parlier \cite{afp}.\footnote{Fanoni, Ghaswala, and McLeay \cite{fgm} give an interesting action of a big mapping class group on the graph of \emph{omnipresent arcs}, but this action is not continuous when the graph is given the topology of a simplicial complex.}It should be noted that each of these graphs has as its vertices isotopy classes of some of the arcs or curves of the surface, and that actions on such a graph are always continuous. This lets us narrow our focus still further: given a big mapping class group with a well-defined quasi-isometry type, is there a simplicial graph whose vertices are arcs or curves on the underlying surface, such that the action of the group on the graph induces a quasi-isometry?

For graphs of curves we come to a very satisfying conclusion: we define a class of \emph{translatable} surfaces---essentially, surfaces admitting a map that acts with north-south dynamics with respect to two distinct ends (see Definition \ref{shifty_def})---and an associated \emph{translatable curve graph}, and show that the mapping class group of a translatable surface is quasi-isometric to its translatable curve graph. What's more, we show that non-translatable surfaces do not admit such a graph of curves, except when that graph has finite diameter. More precisely, we prove the following.

\begin{defn}\label{goc}
	A \emph{graph whose vertices are curves} on a surface $\Sigma$ is a simplicial graph whose vertex set $V$ is a subset of the set of isotopy classes of simple closed curves on $\Sigma$. If $\MCG(\Sigma)$ preserves both the set $V$ and the edge relation on the graph, we call the resulting group action \emph{the action of $\MCG(\Sigma)$ on the graph}. Note that such an action is always continuous.
\end{defn}

\begin{named}{Theorem \ref{only_shifty}}
	Let $\Sigma$ be an infinite-type surface with tame end space such that $\MCG(\Sigma)$ admits a coarsely bounded neighborhood of the identity and a coarsely bounded generating set---and thus has a well-defined quasi-isometry type---but is not itself coarsely bounded. Then the following are equivalent:
	\begin{enumerate}
		\item There exists a graph $\Gamma$ whose vertices are curves, with the action of $\MCG(\Sigma)$ on $\Gamma$ defined and inducing a quasi-isometry.
		\item $\Sigma$ is translatable.
		\item $\Sigma$ has no nondisplaceable finite-type surfaces, making it an \emph{avenue surface} in the sense of Horbez, Qing, and Rafi \cite{hqr}.
	\end{enumerate}
\end{named}

We do not attempt in this paper to study the geometry of the translatable curve graph, although we hope this will be a fruitful avenue for further research. One property is however immediate: by the results of Horbez, Qing, and Rafi \cite{hqr} the translatable curve graph---and thus the mapping class group of a translatable surface---cannot be non-elementary $\delta$-hyperbolic.

In the case of graphs of arcs we have not found such a general classification, but we exhibit one particularly striking quasi-isometry. Note that this surface is not translatable.

\begin{named}{Theorem \ref{cantor_qi}}
	Let $\Sigma = \rea^2 \setminus C$ be the plane minus a Cantor set. Then $\MCG(\Sigma)$ is quasi-isometric to the loop graph $\loopg(\Sigma)$.
\end{named}

Though less general than the previous result, this quasi-isometry is of interest because the loop graph is already well-studied; for instance, the hyperbolicity of the loop graph was demonstrated by Bavard \cite{bavard}, and thus $\MCG(\Sigma)$ is also hyperbolic; see Corollary \ref{hyp}. The Gromov boundary of this graph was also described by Bavard and Walker \cite{bw}.

This is, to our knowledge, the first case of a big mapping class group being shown to be non-elementary $\delta$-hyperbolic. By extension, it is also the first case in which two big mapping class groups have been shown to have distinct, non-trivial quasi-isometry types; see Corollary \ref{distinct}. Finally, it follows from Corollary \ref{hyp} and work of Cornulier and de la Harpe that $\MCG(\Sigma)$ has a coarsely bounded presentation; see Corollary \ref{cbp}.

Before getting to the meat of the paper, we introduce an important motivating example and preview some of the techniques that will be used.

\label{motivating}Durham, Fanoni, and Vlamis \cite{dfv}, studying the Jacob's ladder surface (which has two ends, both accumulated by genus---see Figure \ref{ladder}) present the following subgraph of the curve graph of that surface: its vertices are curves separating the two ends, with an edge between two such curves if they cobound a genus-one subsurface. The main immediate application of this graph results from the fact that, unlike the full curve graph of an infinite-type surface, it has infinite diameter. In particular, a translation acts on this graph with unbounded orbits, which provides an easy proof that the mapping class group of this surface is not coarsely bounded.

\begin{figure}
	\includegraphics[width=\textwidth]{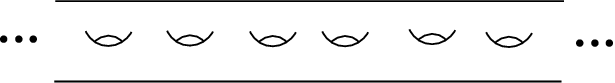}
	\caption{The Jacob's ladder surface.\label{ladder}}
\end{figure}

An early version of Vlamis's notes on the topology of big mapping class groups \cite{aim} claimed that this graph is quasi-isometric to the mapping class group of the Jacob's ladder surface. Vlamis's proof was incomplete---it showed only that the vertex stabilizers are coarsely bounded, which is not sufficient to conclude quasi-isometry---but it provided significant inspiration for the results which eventually became Theorem \ref{shifty_qi}.

First, this graph could in fact be shown to be quasi-isometric to the mapping class group, although it would take some additional effort. Second, the class of surfaces for which such a graph might be built could be expanded significantly beyond the Jacob's ladder surface. The properties of Durham, Fanoni, and Vlamis's graph depended largely on the translatable nature of the Jacob's ladder surface, rather than the details of the translation itself. Other surfaces admitting a similar kind of translation include the bi-infinite flute (see Figure \ref{flute}) and more complicated surfaces that might be built by joining many copies of a single surface as in Figure \ref{general_shiftiness} (on page \pageref{general_shiftiness}). By modifying the construction of Durham, Fanoni, and Vlamis \cite{dfv} we are able to produce a general \emph{translatable curve graph} $\scg(\Sigma)$ which is quasi-isometric to the mapping class group $\MCG(\Sigma)$; this is Theorem \ref{shifty_qi}.

\begin{figure}
	\includegraphics[width=\textwidth]{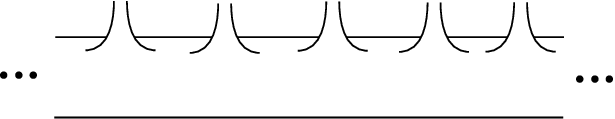}
	\caption{The bi-infinite flute.\label{flute}}
\end{figure}

One obvious follow-up question, given this quasi-isometry, is whether other such graphs can be produced. For instance, are there other cases where a big mapping class group is quasi-isometric to a graph whose vertices are curves? What if the vertices are arcs? Theorem \ref{only_shifty} answers the first question; Theorem \ref{cantor_qi} is a partial answer to the second.

The structure of the rest of this paper is as follows. In Section \ref{prelim}, we recall relevant results from previous papers (\cite{richards}, \cite{rosendal}, and \cite{mr}) that are used in this work, with the goal of making our work accessible to anyone with some knowledge of geometric group theory and low-dimensional topology, but who may not have worked previously with infinite-type surfaces or with the concept of coarse boundedness. Section \ref{prelim} also presents and proves Lemma \ref{ms}, which is a limited version of the Schwarz-Milnor lemma for the case of groups with coarsely bounded neighborhoods of the identity acting transitively on graphs.

In Section \ref{shifty_sec}, we define translatable surfaces and prove some of their properties, most notably Proposition \ref{segments}, which shows that every translatable surface can be written as an infinite connected sum of copies of some subsurface $S$ as in Figure \ref{general_shiftiness}.

In Section \ref{graph_sec} we define the translatable curve graph itself and prove the quasi-isometry to the mapping class group in Theorem \ref{shifty_qi}. The main tools are a study of the maximal ends of the subsurface $S$ found in Proposition \ref{segments}, and Lemma \ref{shift_enough}, which allows us to embed the set of all mapping classes that fix half of our translatable surface in a conjugate of any neighborhood of the identity.

In Section \ref{other_graphs} we show that translatable surfaces are in fact the only surfaces with non-coarsely-bounded mapping class groups quasi-isometric to a graph of curves, proving Theorem \ref{only_shifty}. The main tools here are, on one hand, a demonstration that under some reasonable conditions any surface with two equivalent maximal ends and zero or infinite genus is translatable; and on the other hand, that all other surfaces have mapping class groups that are either themselves coarsely bounded or have no coarsely bounded curve stabilizers, making such a quasi-isometry impossible.

Finally, Section \ref{cantor_sec} uses methods parallel to those in Sections \ref{shifty_sec} and \ref{graph_sec} to prove that the mapping class group of the plane minus a Cantor set is quasi-isometric to the loop graph of that surface.

\subsection*{Acknowledgements}

I am indebted to Nicholas G.\ Vlamis for giving the seminar talk that initially turned me on to the study of big mapping class groups, and for subsequent advice and direction, including introducing me to coarse boundedness. Rylee Lyman suggested the $\natural$ notation used in Proposition \ref{segments}. Kathryn Mann and Kasra Rafi, in addition to producing the seminal results that make this work possible, have also been patient and helpful in responding to my follow-up questions about their paper. I have also received helpful feedback from Federica Fanoni and of course from my advisor, Dave Futer, who has also slogged through all of my drafts. Some of this work was done at the Young Geometric Group Theory IX conference, which was partially funded by NSF grant 1941077.

\section{Preliminaries}\label{prelim}

Before we begin, we recall several results from the work of Rosendal \cite{rosendal} and Mann and Rafi \cite{mr}, as well as the classification of infinite-type surfaces due to Kerékjártó \cite{kerekjarto} and Richards \cite{richards}.

\subsection{Coarse boundedness}

\begin{defn}\label{cb_def}
	A subset $A$ of a topological group $G$ is \emph{coarsely bounded} if it has finite diameter in every left-invariant compatible pseudo-metric on $G$.
\end{defn}

\begin{defn}
	We call a group \emph{locally coarsely bounded} if it has a coarsely bounded neighborhood of the identity.
\end{defn}

For our purposes, the value of this definition lies precisely in the following result:

\begin{fact}[From \cite{rosendal}]\label{qi_gen}
	If $A$ and $B$ are two coarsely bounded generating sets for a locally coarsely bounded group $G$, then the word metrics with respect to the generating sets $A$ and $B$ are quasi-isometric.
\end{fact}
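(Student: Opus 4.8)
First I would reduce the statement to the following boundedness claim: if $T$ is a coarsely bounded generating set of $G$ and $S$ is a coarsely bounded subset, then $S$ has finite diameter in the word metric $\rho_T$ (here $\rho_S(g,h)=\min\{n:g^{-1}h\in S^n\}$, and throughout I assume — as is implicit in the word metric — that generating sets are symmetric and contain $e$, which changes nothing since coarse boundedness is preserved under inverses and finite unions). Writing $|g|_S=\rho_S(e,g)$, so that $\rho_S(g,h)=|g^{-1}h|_S$ by left-invariance: granting the claim and applying it with $(S,T)=(A,B)$ and with $(S,T)=(B,A)$, we obtain $M:=\sup_{a\in A}|a|_B<\infty$ and $M':=\sup_{b\in B}|b|_A<\infty$; then any $g=a_1\cdots a_n$ with $a_i\in A$ and $n=|g|_A$ satisfies $|g|_B\le\sum_i|a_i|_B\le Mn$, so $\rho_B\le M\rho_A$, and symmetrically $\rho_A\le M'\rho_B$. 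Hence $\rho_A$ and $\rho_B$ are bi-Lipschitz equivalent, so the identity map $(G,\rho_A)\to(G,\rho_B)$ is a quasi-isometry, and it remains only to prove the boundedness claim.

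The substance is that claim, and the one real difficulty is that $\rho_T$ is integer-valued: it therefore induces the \emph{discrete} topology on $G$, is not a compatible pseudo-metric, and is invisible to Definition \ref{cb_def}. I would get around this with Rosendal's combinatorial reformulation of coarse boundedness — $S$ is coarsely bounded if and only if for every open neighborhood $U$ of $e$ there are a finite set $F$ and an $n$ with $S\subseteq(FU)^n$. Fix an open symmetric coarsely bounded neighborhood $V\ni e$ (available since $G$ is locally coarsely bounded) and set $\hat T=T\cup V$, again a coarsely bounded symmetric generating set with $V\subseteq\hat T$. Applying the reformulation to $S$ with $U=V$ gives $S\subseteq(FV)^n$ with $F$ finite; since $F\subseteq G=\gen{\hat T}$, each element of $F$ is an $\hat T$-word of length $\le\ell$ for some $\ell$, and $V\subseteq\hat T$, so $S\subseteq\hat T^{\,n(\ell+1)}$, i.e.\ $S$ has finite $\rho_{\hat T}$-diameter.

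Finally I would trade $\rho_{\hat T}$ back for $\rho_T$. Trivially $\rho_{\hat T}\le\rho_T$, and $\rho_T\le c\,\rho_{\hat T}$ as soon as $V\subseteq T^{c}$ for some $c$ (each letter of an $\hat T$-word is then a product of $\le c$ letters of $T$); combined with the previous step this gives the finite $\rho_T$-diameter we wanted. To get $V\subseteq T^{c}$: $G=\bigcup_m T^m$ is an increasing union, so a Baire-category argument — legitimate since the groups at issue are Polish — shows that some power $T^{m_0}$ contains an open neighborhood $W$ of $e$ (up to translating), and then feeding $W$ into the reformulation for the coarsely bounded set $V$ yields $V\subseteq(F'W)^{k}\subseteq T^{c}$ for suitable $c$. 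Applying the boundedness claim to $(A,B)$ and to $(B,A)$ now closes the argument.

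The main obstacle is concentrated entirely in Rosendal's reformulation used above, specifically its nontrivial direction: given a set $S$ for which $S\subseteq(FU)^n$ fails for some fixed open $U$, one must \emph{construct} a compatible left-invariant pseudo-metric in which $S$ has infinite diameter, and this is done by a Birkhoff--Kakutani-type argument built from a countable neighborhood basis at $e$ contained in $U$. That construction is where the genuine tension between the group topology and the topologically-trivial word metric is resolved; everything else above is routine manipulation of word lengths.
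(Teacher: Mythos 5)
Your overall route---reducing to the claim that a coarsely bounded set $S$ has finite diameter in the word metric $\rho_T$ of a coarsely bounded generating set $T$, and proving that claim via the combinatorial characterization of Fact \ref{cb_nbhd}---is exactly Rosendal's; the paper only cites the result, so there is no internal proof to compare against. The one genuine gap is the Baire-category step you use to get $V\subseteq T^c$. Pettis' theorem says that a non-meager set \emph{with the Baire property} times its inverse is an identity neighborhood, and some $T^{m_0}$ is indeed non-meager since $G=\bigcup_m T^m$ and $G$ is Baire; but nothing forces the powers of an arbitrary coarsely bounded generating set to be Baire measurable, and without that the step fails. In fact the statement is false at this level of generality. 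In $G=\ell^2$, take a Hamel basis $H$ inside the unit sphere containing an orthonormal sequence $(e_i)$, and let $T=\set{ch \mid h\in H,\ \abs{c}\le 1}$. Then $T$ is a symmetric coarsely bounded generating set (it lies in the unit ball $B_1$, which is coarsely bounded), yet every element of the $m$-fold sumset $T^m$ is a linear combination of at most $m$ Hamel basis vectors, so no $T^m$ contains a translate of an open set; moreover the unit vectors $v_N=N^{-1/2}(e_1+\dotsb+e_N)$ satisfy $\rho_T(0,v_N)\ge N$ while $\rho_{B_1}(0,v_N)=1$. So $\rho_T$ and $\rho_{B_1}$ are not quasi-isometric even though both are coarsely bounded generating sets of a locally coarsely bounded Polish group.

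The conclusion is that Fact \ref{qi_gen} needs---and in Rosendal's own formulation carries---a mild regularity hypothesis: the generating sets should be analytic or Baire measurable, or, in the version relevant here, should contain an open identity neighborhood (Rosendal arranges this by working with $A\cup V$ for $V$ a coarsely bounded open identity neighborhood). Nothing downstream in the paper is affected, since every generating set to which the Fact is applied (the sets $A=\set{g \mid d(v_0,gv_0)\le 1}$ of Lemma \ref{ms}, products $(FV_K)^k$, and so on) contains an open set $V_S$. Once $T$ is assumed to contain an open identity neighborhood $W$, your troublesome step evaporates: applying Fact \ref{cb_nbhd} to the coarsely bounded set $V$ with the neighborhood $W$ gives $V\subseteq(F'W)^k\subseteq T^c$ directly, with no Baire category needed. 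I would simply add that hypothesis to your claim; note that silently replacing $T$ by $T\cup V$ is not harmless in general, as the example above shows that this replacement can change the quasi-isometry type.
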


We make heavy use of the following alternate characterization of coarse boundedness:

\begin{fact}[From \cite{rosendal}]\label{cb_nbhd}
	Given $G$ a Polish group, and a subset $A \subseteq G$. Then $A$ is coarsely bounded if and only if for every identity neighborhood $V \subseteq G$ there is some $k \in \nat$ and a finite set $F \subseteq G$ such that $A \subseteq (FV)^k$.
\end{fact}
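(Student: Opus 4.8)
The plan is to prove the two implications separately. The implication ``covering condition $\Rightarrow$ coarsely bounded'' is a short computation; the converse is where the Polish hypothesis enters, via the Birkhoff--Kakutani construction of left-invariant metrics from chains of neighborhoods.

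For the first implication, let $d$ be an arbitrary left-invariant compatible pseudo-metric on $G$; I want to bound its diameter on $A$. Applying the hypothesis to the open unit ball $V = \set{g : d(1,g) < 1}$ --- an identity neighborhood because $d$ is compatible --- and enlarging the finite set $F$ so that $1 \in F$, I get a set $W := FV \ni 1$ with $\sup_{w \in W} d(1,w) \le \max_{f \in F} d(1,f) + 1 =: C < \infty$ by left-invariance. Telescoping the triangle inequality together with left-invariance gives $d(1, w_1 \cdots w_k) \le \sum_i d(1, w_i) \le kC$ for any $w_1, \dots, w_k \in W$, and symmetry of $d$ yields the same bound for $d(w_1 \cdots w_k, 1)$; hence $(FV)^k \supseteq A$ has $d$-diameter at most $2kC < \infty$. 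Since $d$ was arbitrary, $A$ is coarsely bounded.

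For the converse I would argue by contraposition. Suppose there is an identity neighborhood $V$, which I may assume symmetric, such that --- after replacing $(FV)^k$ by the larger set $(F \cup V)^k$ --- the failure $A \not\subseteq (F \cup V)^k$ persists for every finite symmetric $F \ni 1$ and every $k \in \nat$. I will build a continuous left-invariant pseudo-metric in which $A$ is unbounded. The engine is a bi-infinite chain $(U_n)_{n \in \zah}$ of symmetric sets, each containing $1$, with $U_n U_n \subseteq U_{n+1}$, subject to: (i) $(U_{-m})_{m \ge 1}$ is a neighborhood basis at $1$ of open sets; (ii) $\bigcup_n U_n = G$; (iii) $A \not\subseteq U_n$ for every $n$. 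For $n \le 0$, let the $U_{-m}$ run through a decreasing symmetric open neighborhood basis at $1$ with $U_0 = V$ and $U_{-m-1}^2 \subseteq U_{-m}$, available because a Polish group is first countable. For $n \ge 1$, set $U_n := (F_n \cup V)^{2^n}$, where $h_1, h_2, \dots$ enumerate a countable dense subset of $G$ (Polish groups are separable) and $F_n := \set{1} \cup \set{h_i^{\pm 1} : i \le n}$. Then $U_n U_n \subseteq (F_{n+1} \cup V)^{2^{n+1}} = U_{n+1}$, condition (iii) is exactly the assumed failure of covering, and (ii) holds because $\bigcup_n U_n$ is a subgroup that is open (it contains the open set $V$) and dense (it contains every $h_i$), hence clopen, hence all of $G$.

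Feeding $(U_n)$ into the standard Birkhoff--Kakutani construction produces a symmetric weight $w(g) := \inf \set{2^n : g \in U_n}$, finite and positive for $g \ne 1$ by (i) and the fact that $G$ is Hausdorff, together with a length function $\ell(g) := \inf \set*{\textstyle\sum_{i=1}^m w(g_i) : g = g_1 \cdots g_m}$ satisfying $\tfrac12 w(g) \le \ell(g) \le w(g)$, the lower estimate being the one place where $U_n U_n \subseteq U_{n+1}$ is used. Then $d(g,h) := \ell(g^{-1}h)$ is a left-invariant pseudo-metric, and it is compatible because the open sets $\set{g : \ell(g) < 2^{-m}}$ are sandwiched between consecutive $U_{-m}$'s and hence form a neighborhood basis at $1$, while $\ell$ is continuous by subadditivity. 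Finally, fixing $a \in A$ and choosing $a_n \in A \setminus U_n$ for each $n$ (possible by (iii)) gives $w(a_n) \ge 2^{n+1}$, so $\ell(a_n) \ge 2^n$, whence $d(a, a_n) \ge \ell(a_n) - \ell(a) \to \infty$ since $\ell(a) < \infty$ by (ii); thus $A$ is unbounded in $d$ and so not coarsely bounded. I expect the main obstacle to be precisely this converse: one must see that finiteness of the pseudo-metric forces a dense set into the ``outer'' end of the chain while (i) and (iii) constrain the ``inner'' end and every level, and one must invoke the Birkhoff--Kakutani estimates with the correct constants.
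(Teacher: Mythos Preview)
The paper does not give its own proof of this statement; it is quoted without argument as a fact from Rosendal's monograph. Your proof is correct and is essentially Rosendal's own: the forward direction is a direct diameter estimate, and for the converse you build a compatible left-invariant metric from a bi-infinite Birkhoff--Kakutani chain, using first countability of the Polish group for the inner neighborhood basis and separability to make the outer levels $(F_n \cup V)^{2^n}$ exhaust $G$. So there is nothing in the paper to compare against, but what you have written is the standard route.

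One small verbal slip worth fixing: you call $(F \cup V)^k$ the ``larger'' set, but with $1 \in F \cap V$ one has $F \cup V \subseteq FV$, hence $(F \cup V)^k \subseteq (FV)^k$ is actually the \emph{smaller} set. This does no damage to your logic---failure of $A \subseteq (FV)^k$ a fortiori gives failure of $A \subseteq (F \cup V)^k$, which is exactly what you need---so your ``persists'' is right even though ``larger'' is not.
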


In light of this definition, we will want to talk about specific identity neighborhoods in the mapping class group of a surface $\Sigma$: if $S$ is a subsurface of $\Sigma$, let $V_S$ be the set of mapping classes with representatives that restrict to the identity on $S$. Note that the set $\set{V_S \mid \text{$S \subseteq \Sigma$ of finite type}}$ forms a neighborhood basis of the identity in $\MCG(\Sigma)$.

We're looking to prove quasi-isometries between groups and graphs, so we want something reminiscent of the Schwarz-Milnor lemma. Fact \ref{qi_gen} makes our work much easier.

\begin{lemma}\label{ms}
	Let $G$ be a locally coarsely bounded group acting transitively by isometries on a connected graph $\Gamma$ equipped with the edge metric. Suppose that for some vertex $v_0 \in \Gamma$, the set $A = \set{g \in G \mid d(v_0, gv_0) \le 1}$ is coarsely bounded. Then the orbit map $g \mapsto g v_0$ is a quasi-isometry.
\end{lemma}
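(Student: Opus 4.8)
The plan is to show that the orbit map $\phi\colon g\mapsto g v_0$ is a quasi-isometry by exhibiting $A$ as a coarsely bounded generating set for $G$ and then checking that $\phi$ is a quasi-isometry from the word metric $d_A$ on $G$ to the edge metric $d$ on $\Gamma$; by Fact~\ref{qi_gen} this suffices, since the quasi-isometry type of $G$ does not depend on which coarsely bounded generating set we chose. First I would verify that $A$ generates $G$: given any $g\in G$, the vertices $v_0$ and $gv_0$ are joined by an edge-path $v_0 = w_0, w_1,\dots, w_n = gv_0$ in the connected graph $\Gamma$, and by transitivity of the action we may pick $g_i\in G$ with $g_i v_0 = w_i$, so that $g_i^{-1}g_{i+1}$ moves $v_0$ a distance at most $1$ and hence lies in $A$; telescoping gives $g = (g_0^{-1}g_1)(g_1^{-1}g_2)\cdots(g_{n-1}^{-1}g_n)$ as a product of $n$ elements of $A$ (using $g_0^{-1}\in A$ after adjusting by the stabilizer, or simply noting $g_0$ may be taken to be the identity since $w_0 = v_0$). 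Note $A$ is symmetric since $d$ is a metric, and it contains the identity, so it is a legitimate generating set; it is coarsely bounded by hypothesis.

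Next I would establish the two quasi-isometry inequalities relating $d_A$ and $d$. The inequality $d(v_0, gv_0) \le d_A(e,g)$ is immediate: if $g = a_1\cdots a_n$ with each $a_i\in A$, then by the triangle inequality and $G$ acting by isometries, $d(v_0, gv_0) \le \sum_{i=1}^n d(a_1\cdots a_{i-1}v_0,\, a_1\cdots a_i v_0) = \sum_i d(v_0, a_i v_0) \le n$. Conversely, the telescoping argument of the previous paragraph shows $d_A(e,g) \le d(v_0, gv_0)$, so in fact $d_A(e,g) = d(v_0, gv_0)$ when $w_0 = v_0$ can be arranged, and in general these differ by at most a bounded amount coming from the stabilizer of $v_0$. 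Translating by $G$-equivariance (both metrics being left-invariant: $d_A$ by construction, and $d(v_0, gv_0)$ as a function on $G$ because $d(v_0, (hg)v_0) = d(h^{-1}v_0, g v_0)$ compared with $d(v_0, gv_0)$ differs by at most $2\,d(v_0, h^{-1}v_0)$, i.e.\ it is coarsely left-invariant), we get that $\phi$ is a quasi-isometric embedding. Surjectivity up to bounded error is exactly transitivity of the action — every vertex is $gv_0$ for some $g$ — so $\phi$ is coarsely surjective, hence a quasi-isometry.

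The one genuine subtlety — and the step I expect to be the main obstacle — is the role of the stabilizer $\Stab_G(v_0)$, which is precisely why the naive Schwarz--Milnor argument needs the hypothesis that $A$ (rather than just a generating set) is coarsely bounded. The point is that $d_A(e,g)$ a priori only controls $d(v_0, gv_0)$, not $g$ itself; two group elements with the same image under $\phi$ can be far apart in $d_A$ unless we know the fibre $\Stab_G(v_0)$ has bounded $d_A$-diameter. But $\Stab_G(v_0)\subseteq A$ (anything fixing $v_0$ moves it distance $0\le 1$), so $\Stab_G(v_0)$ has $d_A$-diameter at most $1$; this is where the coarse boundedness of $A$ is truly used, via the definition of $A$ as the full distance-$\le 1$ set rather than an arbitrary symmetric generating set. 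I would phrase the final argument as: $\phi$ descends to a bijection $G/\Stab_G(v_0)\to V(\Gamma)$, this bijection is an isometry up to additive error $2$ between the quotient metric and $d$, and the quotient map $G\to G/\Stab_G(v_0)$ is a quasi-isometry because the fibres are uniformly $d_A$-bounded. Assembling these gives the claimed quasi-isometry.
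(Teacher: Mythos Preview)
Your proposal is correct and follows essentially the same route as the paper: show $A$ is a coarsely bounded generating set by lifting an edge-path from $v_0$ to $gv_0$, compare word length in $A$ with graph distance via the triangle inequality, and invoke Fact~\ref{qi_gen}. The paper's version is slightly cleaner on the stabilizer point you flag as the ``genuine subtlety'': rather than passing to the quotient $G/\Stab_G(v_0)$, it simply absorbs the stabilizer discrepancy as one extra letter $g_n\in\Stab_G(v_0)\subseteq A$, obtaining $d_A(e,g)\le d(v_0,gv_0)+1$ directly---your quotient argument is valid but unnecessary.
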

\begin{proof}
	Coarse surjectivity follows directly from the transitivity of the action.

	Fix some $g \in G$. Since $\Gamma$ is connected, there is a minimal-length path $v_0, v_1, \dotsc, v_n = gv_0$ from $v_0$ to $gv_0$ with $d(v_i, v_{i+1}) = 1$. Since $d(v_0, v_1) = 1$ and the action of $G$ is transitive, there is some $g_0 \in A$ such that $g_0 v_0 = v_1$. Likewise, there is some $g_1' \in g_0 A g_0^{-1}$ such that $g_1' v_1 = v_2$. Writing $g_1' = g_0 g_1 g_0^{-1}$ with $g_1 \in A$ and $v_1 = g_0 v_0$, we see that $g_0 g_1 v_0 = v_2$. Continuing in this way, we can find $g_0, g_1, \dotsc, g_{n-1} \in A$ such that $g_0 g_1 \dotsm g_{n-1} v_0 = v_n$. Let $g_n = g^{-1} g_0 g_1 \dotsm g_{n-1}$. Then $g_n v_0 = v_0$, so $g_n \in A$, and $g_0 g_1 \dotsm g_{n-1} g_n = g$. Thus $A$ is a generating set for $G$ and the word-metric length of $g$ in $A$ is at most $n = d(v_0, g v_0) + 1$.

	On the other hand, suppose $g_0 g_1 \dotsm g_k = g$ with each $g_i \in A$ and $k$ minimal. By the definition of $A$, $d(g_0 g_1 \dotsm g_i v_0, g_0 g_1 \dotsm g_{i+1} v_0) = d(v_0, g_{i+1} v_0) \le 1$, so the distance $d(v_0, g v_0) \le k+1$. Thus the map $g \mapsto g v_0$ coarsely preserves the word metric with the generating set $A$, and thus by Fact \ref{qi_gen} the orbit map is a quasi-isometry for any choice of coarsely bounded generating set for $G$.
\end{proof}

Before we can apply this lemma, we need to know a bit more about infinite-type surfaces.

\subsection{Infinite-type surfaces}

The classification of infinite-type surfaces was first given by Kerékjártó \cite{kerekjarto}, whose proof was corrected by Richards \cite{richards}. It is based on the following notion of \emph{ends}:

\begin{defn}
	Given a surface $\Sigma$, an \emph{end} of $\Sigma$ is a nested sequence of subsurfaces $S_1 \supseteq S_2 \supseteq \dotsb$ of $\Sigma$, each with compact boundary and with the property that for any compact subsurface $K \subseteq \Sigma$, $K \cap S_n = \emptyset$ for high enough $n$. Two such sequences $S_1 \supseteq S_2 \supseteq \dotsb$ and $T_1 \supseteq T_2 \supseteq \dotsb$ are considered to be the same end if for every $n \in \nat$ there exists $m \in \nat$ such that $T_m \subseteq S_n$, and for every $n \in \nat$ there exists an $m \in \nat$ such that $S_m \subseteq T_n$.

	An end $x$ given by a sequence $S_1 \supseteq S_2 \supseteq \dotsb$ is said to be \emph{accumulated by genus} if every $S_n$ has positive genus. Otherwise $x$ is said to be \emph{planar}.

	The \emph{space of ends} of $\Sigma$, written $E(\Sigma)$, is a topological space whose points are the ends of $\Sigma$ and whose basic open sets correspond to subsurfaces $S \subseteq \Sigma$ with compact boundary. An end $S_1 \supseteq S_2 \supseteq \dotsb$ of $\Sigma$ is contained in the basic open set corresponding to $S$ if $S_n \subseteq S$ for high enough $n$, and this basic open set is a neighborhood of this end.
\end{defn}

The topological space $E(\Sigma)$ is compact, second-countable, and totally separated; this last condition means that for every pair of ends $x, y \in E$, there is a clopen subset $U \subseteq E$ containing $x$ but not $y$. In fact, every separating curve on $\Sigma$ divides $E$ into two clopen subsets---one of which may be empty---and the sets so defined form a countable basis of $E$. The set of ends accumulated by genus is written $E_G$ and is a closed subset of $E$.

\begin{fact}[The principal result of \cite{richards}]
	An orientable surface without boundary is classified by its genus, which may be infinite, its space of ends $E$, and the subset $E_G \subseteq E$ of ends accumulated by genus. What's more, any homeomorphism of the pair $(E, E_G)$ extends to a homeomorphism of the underlying surface.
\end{fact}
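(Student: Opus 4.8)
The plan is to follow the classical route of Kerékjártó and Richards: reduce an arbitrary surface to a combinatorial normal form built from a well-chosen exhaustion, and then compare two such normal forms by a back-and-forth argument. Since $\Sigma$ is connected and second countable it is $\sigma$-compact, hence admits an exhaustion by compact connected bordered subsurfaces $P_1 \subseteq P_2 \subseteq \cdots$ with $P_n \subseteq \mathrm{int}(P_{n+1})$ and $\bigcup_n P_n = \Sigma$. I would first upgrade this to a \emph{canonical} exhaustion --- one in which no component of $\Sigma \setminus P_n$ has compact closure and each component of $\overline{\Sigma \setminus P_n}$ meets $P_n$ along a single circle --- by absorbing compact complementary pieces and, wherever a complementary component abuts $P_n$ in several circles, enlarging $P_n$ by a connected subsurface of that component that merges those circles into one; this is routine cut-and-paste. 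From a canonical exhaustion one reads off a rooted tree whose level-$n$ vertices are the components $U$ of $\overline{\Sigma \setminus P_n}$, with an edge when $U' \subseteq U$, each edge decorated by $\mathrm{genus}(U \cap P_{n+1})$. One then checks that the space of ends of $\Sigma$ is exactly the space of ends (infinite rays) of this tree, that $E_G$ is the set of rays meeting infinitely many positive decorations, and that the total genus is $\mathrm{genus}(P_1)$ plus the (possibly infinite) sum of all decorations.

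For uniqueness, i.e.\ completeness of the invariant, suppose $\Sigma$ and $\Sigma'$ have equal genus and a homeomorphism $\phi\colon (E, E_G) \to (E', E'_G)$. Fix canonical exhaustions $\{P_n\}$ and $\{Q_n\}$; using continuity of $\phi$ and $\phi^{-1}$ on these compact totally separated spaces, I would interleave passages to subsequences so that, for each $n$, the clopen partition of $E'$ induced by $Q_n$ refines the $\phi$-image of the one induced by $P_n$, and conversely. This yields a bijection between the collar pieces $\overline{P_{n+1} \setminus \mathrm{int}(P_n)}$ and the corresponding pieces on the $\Sigma'$ side that respects the tree structure. Invoking the classification of compact bordered surfaces (equal genus and equal number of boundary circles imply homeomorphic), it remains only to arrange that matched pieces have equal genus --- possible because the total genera agree and because handles can be slid between pieces whose common complementary end lies in $E_G$ without changing any invariant. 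Choosing the piecewise homeomorphisms to agree on the matched boundary circles and taking the union over $n$ produces a homeomorphism $\Sigma \to \Sigma'$ inducing $\phi$. (The companion realization statement --- that every triple $(g, E, E_G)$ with $E$ compact metrizable totally disconnected, $E_G$ closed, and $E_G = \emptyset$ when $g < \infty$ actually occurs --- I would prove by building a canonical exhaustion from a defining sequence of finite clopen partitions of $E$ shrinking to points and inserting infinitely many handles along each branch approaching $E_G$, $g$ in all, and none elsewhere.)

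The final clause is then a relative version of the same argument: given a homeomorphism $h\colon (E, E_G) \to (E, E_G)$, apply it to the clopen partitions coming from a fixed canonical exhaustion of $\Sigma$; the images form a descending system of clopen partitions realized, after interleaving, by a second canonical exhaustion of the \emph{same} surface, and the induced isomorphism of decorated trees covers $h$ on the end spaces, so the back-and-forth construction above yields a self-homeomorphism of $\Sigma$ restricting to $h$. I expect the genuine work to lie in the middle paragraph: making the handle-redistribution bookkeeping precise enough that the piecewise homeomorphisms glue --- the matching of collar pieces must be simultaneously compatible with the tree structure and with the prescribed boundary identifications --- and, on the realization side, verifying that the constructed surface has $E_G$ equal to, not merely containing, the prescribed closed set (no stray ends may become accumulated by genus). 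Everything else --- existence of canonical exhaustions, the point-set facts about $E$, and the classification of compact surfaces with boundary --- is either routine or a citable black box.
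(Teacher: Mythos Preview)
The paper does not prove this statement at all: it is recorded as a \texttt{fact} with the bracketed attribution ``The principal result of \cite{richards}'' and is used as a black box throughout. There is therefore no ``paper's own proof'' to compare against.

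That said, your sketch is the standard Ker\'ekj\'art\'o--Richards argument and is broadly correct in outline: canonical exhaustion, associated decorated tree, back-and-forth matching of collar pieces using the classification of compact bordered surfaces, and handle redistribution to equalize genera piece by piece. Your self-identified difficulty is the right one: the bookkeeping that lets you simultaneously (i) match the tree combinatorics dictated by $\phi$, (ii) equalize genera of corresponding collars, and (iii) make the piecewise homeomorphisms agree on shared boundary circles is where Richards's paper does real work, and a full proof must spell out why handles can always be pushed toward an end of $E_G$ without disturbing already-built stages. One small point worth tightening: in your canonical exhaustion you want each complementary component to have \emph{noncompact} closure \emph{and} genus either zero or infinite in the limit direction, or at least to control where finite genus sits; otherwise the handle-sliding step may have nowhere to send excess handles when $E_G$ is empty but the total genus is finite. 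Richards handles this by first absorbing all of the (finite) genus into $P_1$ in that case.
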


Following the example of Mann and Rafi, we will mostly avoid referencing $E_G$ explicitly, and implicitly assume it is preserved. For instance, when we say that two subsets $U, V \subseteq E$ are homeomorphic, we mean that there is a homeomorphism $f \colon U \to V$ such that $f(U \cap E_G) = V \cap E_G$.

Mann and Rafi introduce the following partial pre-order on $E$, which provides a valuable for tool for studying its topology:
\begin{defn}
	Given $x, y \in E$, we say $x \succcurlyeq y$ if for every clopen neighborhood $U$ of $x$, there exists a clopen neighborhood $V$ of $y$ homeomorphic to a clopen subset of $U$.
\end{defn}

As might be expected, we write $y \prec x$ when $y \preccurlyeq x$ but $x \not\preccurlyeq y$, and $y \sim x$ when $y \preccurlyeq x$ and $x \preccurlyeq y$. We use the notation $E(x) = \set{y \in E \mid x \sim y}$ for the equivalence class of $x \in E$ under the relation $\sim$.

Crucially, this order plays well with the topology of the space of ends and has maximal elements, which have a fairly rigid structure.

\begin{fact}[Lemma 4.6 of \cite{mr}]\label{bigger_closed}
	For every $y \in E$, the set $\set{x \in E \mid x \succcurlyeq y}$ is closed.
\end{fact}

\begin{fact}[Proposition 4.7 of \cite{mr}]\label{max_exist}
	The partial pre-order $\preccurlyeq$ has maximal elements. Furthermore, for every maximal element $x \in E$, the equivalence class $E(x)$ is either finite or a Cantor set.
\end{fact}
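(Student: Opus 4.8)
The plan is to establish the two assertions in turn: existence of maximal elements via Zorn's lemma fed by a compactness argument, and the structure of the classes $E(x)$ by showing $E(x)$ is closed in $E$ and then invoking Brouwer's topological characterization of the Cantor set.

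For existence I would apply Zorn's lemma to the preorder $(E,\preccurlyeq)$ (equivalently, to the quotient poset $E/{\sim}$); the only point to check is that every chain $C \subseteq E$ has an upper bound. I would regard $C$ as a net indexed by itself---this makes sense since a chain is a directed preorder---and use compactness of $E$ to produce a cluster point $x$ of this net, so that for every clopen neighborhood $U$ of $x$ and every $c_0 \in C$ there is some $c \in C$ with $c_0 \preccurlyeq c$ and $c \in U$. Then $x$ is an upper bound: given $c_0 \in C$ and a clopen neighborhood $U$ of $x$, choose such a $c$, shrink (using that $E$ is zero-dimensional) to a clopen $W$ with $c \in W \subseteq U$, and note that since $c_0 \preccurlyeq c$ some clopen neighborhood of $c_0$ is homeomorphic, respecting $E_G$, to a clopen subset of $W$, hence of $U$; as $U$ was arbitrary, $c_0 \preccurlyeq x$. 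Zorn's lemma then yields a maximal element.

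For the second assertion I would first record a locality lemma: if $f \colon A \to B$ is a homeomorphism of clopen subsets of $E$ with $f(A \cap E_G) = B \cap E_G$, then $z \sim f(z)$ for every $z \in A$ (intersect an arbitrary clopen neighborhood of $f(z)$ with $B$ and pull it back through $f$, and symmetrically through $f^{-1}$). The crucial step is then that $E(x)$ is closed whenever $x$ is maximal. Let $y$ lie in $\ol{E(x)}$ and pick $y_n \in E(x)$ with $y_n \to y$ ($E$ is compact metrizable). Given any clopen neighborhood $U$ of $y$, some $y_N \in U$; shrinking to a clopen $W$ with $y_N \in W \subseteq U$ and using $x \preccurlyeq y_N$ gives a clopen neighborhood of $x$ homeomorphic, respecting $E_G$, to a clopen subset of $W \subseteq U$, so $x \preccurlyeq y$. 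If $y \preccurlyeq x$ failed we would have $x \prec y$, contradicting maximality of $x$; hence $y \preccurlyeq x$, so $y \sim x$ and $y \in E(x)$. Thus $E(x)$ is closed, hence compact.

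To finish, I would use the locality lemma to show that either every point of $E(x)$ is isolated in $E(x)$ or none is: if $y \in E(x)$ is isolated, fix clopen $W$ with $W \cap E(x) = \{y\}$, and for arbitrary $z \in E(x)$ use $z \preccurlyeq y$ to get a clopen neighborhood $V_z$ of $z$ and a homeomorphism $f \colon V_z \to W' \subseteq W$ respecting $E_G$; by the locality lemma $f$ maps $V_z \cap E(x)$ injectively into $W \cap E(x) = \{y\}$, forcing $V_z \cap E(x) = \{z\}$. In the first case $E(x)$ is compact and discrete, hence finite; in the second case $E(x)$ is nonempty, compact, metrizable, perfect, and totally disconnected---the last properties being inherited from $E$---so it is a Cantor set by Brouwer's theorem. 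The main obstacle is the closedness of $E(x)$: the passage from $y \in \ol{E(x)}$ to $x \preccurlyeq y$ must be arranged carefully with zero-dimensionality, and it is precisely the interaction of this with maximality (which rules out $x \prec y$) that drives the argument; indeed, for non-maximal ends $E(x)$ need not be closed, as one already sees on a convergent-sequence end space.
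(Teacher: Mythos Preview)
The paper does not prove this statement at all: it is stated as a \emph{Fact} and attributed to Proposition~4.7 of Mann--Rafi, so there is no proof in the paper to compare against. Your argument is self-contained and correct. The Zorn-plus-compactness step for existence is sound: viewing a chain as a net indexed by itself and passing to a cluster point gives exactly the inequality $c_0 \preccurlyeq x$ you need, since any clopen neighborhood of the cluster point contains some $c \succcurlyeq c_0$ and hence (via $c_0 \preccurlyeq c$) a clopen copy of a neighborhood of $c_0$. The locality lemma is immediate from the definition, and your use of it to propagate isolation across $E(x)$ is clean. The only place one might pause is the closedness of $E(x)$, but you handle it correctly: from $y \in \ol{E(x)}$ you get $x \preccurlyeq y$ by the neighborhood argument, and then maximality of $x$ forces $y \preccurlyeq x$, since otherwise $x \prec y$. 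With $E(x)$ closed, the finite-or-Cantor dichotomy follows from Brouwer's characterization exactly as you say.
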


The following lemma is a slight generalization of the first part of Fact \ref{max_exist}, following precisely the same proof.

\begin{lemma}\label{closed_max_exist}
	Every nonempty closed subset $F \subseteq E$ has maximal elements.
\end{lemma}
\begin{proof}
	Suppose $\mc{C}$ is a totally ordered subset (a chain) in $F$. For each $y \in \mc{C}$, the set $\set{x \in F \mid x \succcurlyeq y}$ is closed by Fact \ref{bigger_closed} and thus compact because $E$ itself is compact, and is non-empty by construction. Since $\mc{C}$ is totally ordered, these sets are nested, and so their intersection $\bigcap_{y \in \mc{C}} \set{x \in F \mid x \succcurlyeq y}$ is nonempty and contains a maximal element of the chain $\mc{C}$. Thus by Zorn's lemma the set $F$ has a maximal element.
\end{proof}

Mann and Rafi also define the following self-similarity condition, and prove some useful properties of it:

\begin{defn}
	A clopen neighborhood $U$ of an end $x \in E$ is \emph{stable} if for every clopen neighborhood $U'$ of $x$ contained in $U$, there is a clopen subset of $U'$ homeomorphic to $U$.
\end{defn}

\begin{fact}[Lemma 4.17 of \cite{mr}]\label{stable_homeo}
	If $x \sim y \in E$ and $x$ has a stable neighborhood $U$, then all sufficiently small neighborhoods of $y$ are homeomorphic to $U$ via a homeomorphism taking $x$ to $y$.
\end{fact}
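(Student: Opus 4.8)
The plan is to reduce the statement to a pointed Cantor--Schr\"oder--Bernstein argument for the zero-dimensional compact metrizable space $E$, with the stability hypothesis supplying the extra ingredient needed to make such an argument go through. Throughout, every homeomorphism is understood to respect $E_G$, and ``pointed'' means basepoint-preserving. Since the clopen neighborhoods of $y$ form a neighborhood basis, it suffices to produce a single clopen neighborhood $V_0$ of $y$ such that every clopen neighborhood $W \subseteq V_0$ of $y$ admits a pointed homeomorphism onto $U$; this is exactly the assertion that all sufficiently small neighborhoods of $y$ are homeomorphic to $U$ via a homeomorphism taking $y$ to $x$.

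First I would extract clopen embeddings in both directions. Applying $x \succcurlyeq y$ to the neighborhood $U$ gives a clopen neighborhood $V_0$ of $y$ together with a homeomorphism of $V_0$ onto a clopen subset of $U$; a short refinement near the basepoint (or the basepoint-aware formulation of $\preccurlyeq$ used in \cite{mr}) lets us take this to be pointed. Fix $W \subseteq V_0$ a clopen neighborhood of $y$; restricting the above map gives a pointed clopen embedding $W \hookrightarrow U$. In the other direction, apply $y \succcurlyeq x$ to the neighborhood $W$ of $y$: this produces a clopen neighborhood $U_0$ of $x$, which we may shrink to lie inside $U$, with a pointed clopen embedding $U_0 \hookrightarrow W$. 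Here stability enters: since $U$ is stable and $U_0$ is a clopen neighborhood of $x$ inside $U$, there is a clopen copy of $U$ contained in $U_0$ --- and, with a small bootstrapping step, a copy that is a neighborhood of $x$ and is pointed-homeomorphic to $U$ --- so composing yields a pointed clopen embedding $U \hookrightarrow W$.

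Now I have pointed clopen embeddings $f\colon W \hookrightarrow U$ and $g\colon U \hookrightarrow W$, and I would run the usual Cantor--Schr\"oder--Bernstein construction: put $h = g\circ f \colon W \to W$, a clopen embedding fixing $y$; let $C = W \setminus g(U)$, a clopen set not containing $y$; let $C_n = h^n(C)$; and let $A = W \setminus \bigcup_{n\ge 0} C_n$, a closed set containing $y$. The map sending $w$ to $h(w)$ on $\bigcup_n C_n$ and to $w$ on $A$ is a pointed bijection $W \to g(U)$, hence gives a pointed bijection $W \to U$ after composing with $g^{-1}$; it is visibly continuous on the open set $\bigcup_n C_n$, where it equals $h$, and on $A$, and it is injective, so the only thing left to check is continuity along the frontier of $A$. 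This is the main obstacle: if $w_k \to a \in A$ with $w_k$ in ever-deeper pieces $C_{n_k}$, one needs $h(w_k) \to a$, and for an arbitrary clopen self-embedding of a compact metric space this can fail. The fix is to use stability a second time, in choosing the embeddings $f$ and $g$ (equivalently, in choosing the scales at which stability is invoked), so that the nested images $h^n(W)$ shrink toward the basepoint in a controlled way --- morally, so that $h$ is close to the identity near $A$ --- which forces the piecewise map to be continuous across the frontier of $A$, so that $W$ is pointed-homeomorphic to $U$.

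Since $W$ was an arbitrary clopen neighborhood of $y$ inside $V_0$, this proves the lemma. The remaining content is bookkeeping: propagating the $E_G$-labelling through every homeomorphism (each map produced by $\preccurlyeq$ and by stability already respects it), and the two ``pointed refinement'' steps flagged above, neither of which is conceptually hard. I expect essentially all of the real difficulty to be concentrated in the continuity-near-$A$ step of the Cantor--Schr\"oder--Bernstein argument, which is precisely where it matters that $U$ is \emph{stable}, and not merely a neighborhood into which small neighborhoods of $y$ happen to embed.
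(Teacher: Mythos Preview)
The paper does not prove this statement at all: it is stated as a \emph{Fact} and attributed to Lemma~4.17 of Mann--Rafi \cite{mr}, with no argument given. So there is no ``paper's own proof'' to compare your proposal against.

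As for your proposal on its own merits: you correctly identify the key obstruction in a naive Cantor--Schr\"oder--Bernstein argument---continuity of the piecewise-defined map across the frontier of $A$---but you do not actually resolve it. Saying that one should ``use stability a second time'' so that the images $h^n(W)$ ``shrink toward the basepoint in a controlled way'' is a plan, not a proof; you have not specified which embeddings to choose, nor argued that such a choice exists, nor verified that it makes the resulting map continuous. The bootstrapping step you flag (upgrading the clopen copy of $U$ inside $U_0$ furnished by stability to one that is a \emph{pointed neighborhood} of $x$) is also not automatic from the definition of stability as stated. If you want to turn this into a genuine proof, you will need to either carry out these steps in detail or consult the original argument in \cite{mr}.
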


\begin{fact}[Lemma 4.18 of \cite{mr}]\label{add_in}
	Let $x, y \in E$ and assume $x$ has a stable neighborhood $V_x$, and that $x$ is an accumulation point of $E(y)$. Then for any sufficiently small clopen neighborhood $U$ of $y$, $U \cup V_x$ is homeomorphic to $V_x$.
\end{fact}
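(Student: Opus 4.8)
\emph{Plan.} Since the statement concerns only the end space, I work throughout in $E$ together with its distinguished closed subset $E_G$, with ``homeomorphic'' always meaning homeomorphic as a pair. If $y\in V_x$ the conclusion is vacuous, since every sufficiently small clopen neighbourhood of $y$ lies in $V_x$; so I assume $y\notin V_x$, and since $V_x$ is clopen, every sufficiently small clopen neighbourhood $U$ of $y$ is then disjoint from $V_x$. The plan is to produce a homeomorphism $U\sqcup V_x\cong V_x$.

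First I would shrink $V_x$ into a convenient, maximally self-similar form. Applying Fact \ref{stable_homeo} with the trivial relation $x\sim x$, there is a clopen neighbourhood $N$ of $x$ such that every clopen neighbourhood of $x$ contained in $N$ is homeomorphic to $V_x$ via a homeomorphism fixing $x$; in particular $N\cong V_x$ rel $x$. A clopen sub-neighbourhood of a stable neighbourhood is again stable — given a smaller clopen neighbourhood of $x$, stability of $V_x$ puts a clopen copy of $V_x$, hence of $N$, inside it — so $N$ is stable at $x$, and composing homeomorphisms shows that \emph{every} clopen neighbourhood of $x$ contained in $N$ is homeomorphic to $N$ rel $x$. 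As $N\cong V_x$, it suffices to prove the lemma with $N$ in place of $V_x$; renaming $N$ as $V_x$, I may therefore assume that every clopen neighbourhood of $x$ contained in $V_x$ is homeomorphic to $V_x$ rel $x$.

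Next I would find a copy of $U$ sitting inside $V_x$ away from $x$. Because $x$ is an accumulation point of $E(y)$, the clopen neighbourhood $V_x$ of $x$ contains some point $z\in E(y)$ with $z\neq x$; fix a clopen neighbourhood $W$ of $z$ with $W\subseteq V_x$ and $x\notin W$. From $z\sim y$ we have $y\preccurlyeq z$, so applying the definition of $\preccurlyeq$ to $W$ yields a clopen neighbourhood $M$ of $y$ homeomorphic to a clopen subset of $W$. Consequently, for every clopen neighbourhood $U$ of $y$ small enough to lie in $M$ and in $E\setminus V_x$, the set $U$ is homeomorphic as a pair to a clopen subset $R$ of $V_x$ with $x\notin R$. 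Then the self-similarity finishes the job: since $R$ is clopen and avoids $x$, the complement $V_x\setminus R$ is a clopen neighbourhood of $x$ contained in $V_x$, hence homeomorphic to $V_x$, so that
\[
V_x \;=\; R\sqcup(V_x\setminus R)\;\cong\;U\sqcup V_x .
\]

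The argument is short; the one delicate point is the reduction in the second paragraph. It is the stability hypothesis, routed through Fact \ref{stable_homeo}, that licenses the assumption that \emph{all} clopen neighbourhoods of $x$ — not merely the small ones — are homeomorphic to $V_x$, and without this Step's final move would be unavailable, so one must take care there to track which homeomorphisms fix $x$. The only other thing to check is that every homeomorphism used respects $E_G$, and this is automatic: $R$, $V_x\setminus R$ and $U$ all carry the structure restricted from $(E,E_G)$, and the identifications $R\cong U$ and $V_x\setminus R\cong V_x$ are homeomorphisms of pairs by construction.
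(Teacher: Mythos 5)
This statement is imported verbatim from Mann--Rafi (their Lemma 4.18) and the paper gives no proof of it, so there is no in-paper argument to compare yours against; what you have written is a self-contained proof, and it is correct. The structure is the natural one: upgrade the stable neighborhood to one in which \emph{every} clopen sub-neighborhood of $x$ is homeomorphic to the whole (via Fact \ref{stable_homeo} applied with $y=x$), use the accumulation hypothesis together with $y\preccurlyeq z$ for some $z\in E(y)\cap V_x\setminus\{x\}$ to plant a clopen copy $R$ of $U$ inside $V_x$ away from $x$, and then absorb it: $V_x=R\sqcup(V_x\setminus R)\cong U\sqcup V_x$. The one step you assert without justification is that proving the claim for $N$ suffices for the original $V_x$; this does hold, but only because $V_x\setminus N$ is clopen and $U$ is disjoint from all of $V_x$, so a homeomorphism $U\sqcup N\to N$ extends by the identity on $V_x\setminus N$ to a homeomorphism $U\sqcup V_x\to V_x$ --- worth one sentence. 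Two cosmetic points: the case $y\in V_x$ is trivial rather than vacuous, and the observation that $N$ is itself stable is not actually needed (the composition $U'\cong V_x\cong N$ rel $x$ already gives the property you use). Your check that all identifications are homeomorphisms of pairs $(E,E_G)$ is the right thing to verify and is handled correctly.
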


Many of the results in later sections will assume the existence of certain stable neighborhoods, in the form of what Mann and Rafi call \emph{tameness}:

\begin{defn}\label{tame_defn}
	An end space $E$ is said to be \emph{tame} if any $x \in E$ that is either maximal or an immediate predecessor to a maximal end has a stable neighborhood.
\end{defn}

It is an open question (Problem 6.15 of \cite{mr}) whether there exists any surface with non-tame end space whose mapping class group is not coarsely bounded but has a well-defined quasi-isometry type. For this reason, we consider tameness to be an acceptable condition to impose in some of our results.

To achieve the negative results of subsection \ref{neg_results}, we will need to consider the properties of mapping class groups that are locally coarsely bounded and admit a coarsely bounded generating set. Here $\mc{M}(X)$ denotes the set of maximal ends of some subspace $X \subseteq E$.

\begin{fact}[Theorem 1.4 of \cite{mr}]\label{cb_class}
	$\MCG(\Sigma)$ is locally coarsely bounded if and only if either $\MCG(\Sigma)$ is itself coarsely bounded or there is a finite-type surface $K \subseteq \Sigma$ such that the complementary regions of $K$ each have infinite type and zero or infinite genus, and partition $E$ into finitely many clopen sets
	\[ E = \left( \bigsqcup_{A \in \mc{A}} A \right) \sqcup \left( \bigsqcup_{P \in \mc{P}} P \right) \]
	such that:
	\begin{enumerate}
		\item Each $A \in \mc{A}$ is self-similar, with $\mc{M}(A) \subseteq \mc{M}(E)$ and $\mc{M}(E) \subseteq \bigsqcup_{A \in \mc{A}} \mc{M}(A)$,
		\item each $P \in \mc{P}$ is homeomorphic to a clopen subset of some $A \in \mc{A}$, and
		\item for any $x_A \in \mc{M}(A)$, and any neighborhood $V$ of the end $x_A$ in $\Sigma$, there is $f_V \in \MCG(\Sigma)$ so that $f_V(V)$ contains the complementary component to $K$ with end space $A$.
	\end{enumerate}
	Moreover, in this case $V_K$---the set of mapping classes restricting to the identity on $K$---is a coarsely bounded neighborhood of the identity.
\end{fact}

We will also make use of the following necessary condition for $\MCG(\Sigma)$ to have a coarsely bounded generating set:

\begin{defn}[Definition 6.2 of \cite{mr}]
	We say that an end space $E$ has \emph{limit type} if there is a finite-index subgroup $G$ of $\MCG(\Sigma)$, a $G$-invariant set $X \subseteq E$, points $z_n \in E$ indexed by $n \in \nat$ which are pairwise inequivalent, and a nested family of clopen sets $U_n$ with $\bigcap_{n \in \nat} U_n = X$ such that
	\[ E(z_n) \cap U_n \ne \emptyset, \quad E(z_n) \cap U_0^c \ne \emptyset, \quad \text{and} \quad E(z_n) \subseteq (U_n \cup U_0^c) \]
	where $U_0^c = E \setminus U_0$.
\end{defn}

This definition is somewhat daunting, so we present the following example of a surface whose end space has limit type. Let $z_1$ be a puncture, and for each $n > 1$ let $z_n$ be an end accumulated by countably many points locally homeomorphic to $z_{n-1}$. Then let $z_\omega$ be an end accumulated by $\set{z_n}_{n \in \nat}$. Let $F$ be the set of ends just defined, and let $\Sigma$ be a surface with zero genus and end space homeomorphic to the disjoint union of $n$ copies of $F$ for any $n > 1$. It can be verified that the end space of $\Sigma$ has limit type.

\begin{fact}[Lemma 6.4 of \cite{mr}]\label{limit_type}
	If $E$ has limit type, then $\MCG(\Sigma)$ does not admit a coarsely bounded generating set.
\end{fact}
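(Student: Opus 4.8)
The plan is to argue by contradiction: suppose $\MCG(\Sigma)$ has a coarsely bounded generating set. The finite-index subgroup $G$ of the limit-type definition then also has one --- by a Reidemeister--Schreier argument, since finite unions and products of translates of a coarsely bounded set are coarsely bounded --- so by Fact~\ref{cb_nbhd}, applied in $G$ to the identity neighbourhood $V_K \cap G$ for a finite-type $K$ to be fixed below, there are a coarsely bounded generating set $A$ of $G$, a finite symmetric $F \subseteq G$, and $k \in \nat$ with $A \subseteq (F (V_K \cap G))^k$; as $\gen{A} = G$ this gives $G = \gen{F \cup (V_K \cap G)}$. Choose $K$ large enough that $U_0$ is a union of end spaces of complementary components of $\Sigma \setminus K$; then, since every $v \in V_K$ fixes $\partial K$ pointwise and hence preserves each complementary component, $v(U_0) = U_0$ for all $v \in V_K$. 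Finally, since $X$ is $G$-invariant and $E$ is compact, for each $f \in F$ the decreasing closed sets $U_n \setminus f^{-1}(U_0)$ eventually vanish, so there is $M \in \nat$ with $f(U_n) \subseteq U_0$ for all $f \in F$ and all $n \geq M$.

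For $g \in G$ define $\rho(g) = \set{n \in \nat \mid g(e) \notin U_0 \text{ for some } e \in U_n \cap E(z_n)}$. The crucial point is that $\rho$ is subadditive: $\rho(g_1 g_2) \subseteq \rho(g_1) \cup \rho(g_2)$. Indeed, if $n \in \rho(g_1 g_2)$, pick $e \in U_n \cap E(z_n)$ with $g_1 g_2(e) \notin U_0$ and set $e' = g_2(e)$; since $g_2$ preserves $\preccurlyeq$-classes, $e' \in E(z_n)$. If $e' \notin U_0$ then $n \in \rho(g_2)$; and if $e' \in U_0$ then the limit-type inclusion $E(z_n) \subseteq U_n \cup U_0^c$ forces $e' \in U_n$, and $g_1(e') = g_1 g_2(e) \notin U_0$ gives $n \in \rho(g_1)$. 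Combining with the first paragraph: $\rho(v) = \emptyset$ for every $v \in V_K$, while $\rho(f) \subseteq \set{n \in \nat \mid n < M}$ for every $f \in F$; hence for every word $w$ in $F \cup (V_K \cap G)$ one has $\rho(w) \subseteq \bigcup_{f \in F} \rho(f) =: P$, a fixed finite set. As $G = \gen{F \cup (V_K \cap G)}$ this forces $\rho(g) \subseteq P$, so in particular $\abs{\rho(g)} \leq \abs{P}$, for \emph{every} $g \in G$.

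It remains to produce $g \in G$ with $\abs{\rho(g)} > \abs{P}$. Pick distinct indices $n_1, \dotsc, n_{\abs{P}+1}$. For each $j$, the hypotheses $E(z_{n_j}) \cap U_{n_j} \neq \emptyset$ and $E(z_{n_j}) \cap U_0^c \neq \emptyset$, together with a change-of-coordinates argument using $a_{n_j} \sim b_{n_j}$, provide a mapping class $\phi_j \in G$ supported in a subsurface $S_j$ with $\phi_j(a_{n_j}) = b_{n_j}$ for some chosen $a_{n_j} \in E(z_{n_j}) \cap U_{n_j}$ and $b_{n_j} \in E(z_{n_j}) \cap U_0^c$; since there are only finitely many $\phi_j$, we may further arrange that $S_j$ avoids $a_{n_i}$ and $b_{n_i}$ for all $i \neq j$. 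Then $g := \phi_1 \dotsm \phi_{\abs{P}+1} \in G$ satisfies $g(a_{n_j}) = b_{n_j} \notin U_0$ for every $j$, so $\set{n_1, \dotsc, n_{\abs{P}+1}} \subseteq \rho(g)$ and $\abs{\rho(g)} \geq \abs{P} + 1$, contradicting the previous paragraph. Hence $\MCG(\Sigma)$ admits no coarsely bounded generating set. The step needing real care is the construction of the $\phi_j$ inside $G$ and without assuming tameness, which relies on the finer structure of the pre-order $\preccurlyeq$ near the ends $z_n$ (cf.\ Fact~\ref{max_exist}); the conceptual engine of the proof, by contrast, is the recognition that the right invariant to track is ``a copy of $E(z_n)$ driven out of $U_0$'', since it is precisely the third clause of the definition of limit type that makes this quantity subadditive.
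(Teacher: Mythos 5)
The paper does not prove this statement; it is imported verbatim as Lemma 6.4 of \cite{mr}, so there is no internal proof to compare yours against. Taken on its own terms, your argument is sound up to its last paragraph: the subadditive invariant $\rho$ is a clean way to package the obstruction, the verification that $\rho$ vanishes on $V_K \cap G$ and is contained in a fixed finite set for each element of $F$ is correct, and the conclusion that coarsely bounded generation forces $\rho(g) \subseteq P$ for all $g \in G$ follows. (Two smaller points still need justification: that the finite-index subgroup $G$ may be taken open, so that ``coarsely bounded generating set for $G$'' makes sense and is inherited from $\MCG(\Sigma)$; and that $g(e) \sim e$ for every homeomorphism $g$ and every end $e$, which is what makes $E(z_n)$ setwise invariant---this is true directly from the definition of $\preccurlyeq$, but you use it silently.)

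The genuine gap is the final step, which is the only place where the limit-type hypothesis gets converted into group elements. You assert that $a_{n_j} \sim b_{n_j}$ yields, ``by a change-of-coordinates argument,'' a mapping class $\phi_j$ with $\phi_j(a_{n_j}) = b_{n_j}$. It does not: the relation $\sim$ only says that every clopen neighborhood of one end contains a clopen copy of some clopen neighborhood of the other, and without stability or tameness (which Lemma 6.4 does not assume) two equivalent ends need not lie in the same $\MCG(\Sigma)$-orbit, let alone the same $G$-orbit. What you actually need is weaker and is extractable from the definition: applying $a_{n_j} \preccurlyeq b_{n_j}$ to the clopen neighborhood $U_0^c$ of $b_{n_j}$ gives a clopen neighborhood $A$ of $a_{n_j}$---which may be shrunk to lie inside $U_{n_j}$---together with a disjoint homeomorphic clopen copy $A' \subseteq U_0^c$; the homeomorphism of $E$ swapping $A$ with $A'$ and fixing everything else extends to $\Sigma$ by Richards' classification (after matching genus) and drives $a_{n_j}$ out of $U_0$, which is all that $\rho$ requires. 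Even then you must arrange that the resulting element lies in the finite-index subgroup $G$, which your sketch does not address; note that an involution is useless for this purpose, since the power of it that lands in $G$ may act trivially on ends. Since this is precisely the step you defer, the proof as written does not close.
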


\section{Translations on surfaces}\label{shifty_sec}

We first define a useful notion of convergence:

\begin{defn}
	Given a surface $\Sigma$, an end $e$ of $\Sigma$, and a sequence $\alpha_1, \alpha_2, \dotsc$ of curves on $\Sigma$, we say that $\lim_{n \to \infty} \alpha_n = e$ if, for every neighborhood $V$ of the end $e$ in the surface $\Sigma$, all but finitely many of the $\alpha_i$ are (after some isotopy) contained in $V$.
\end{defn}

A translation, then, will be a map that moves all curves toward one end and away from another. That is:

\begin{defn}\label{shifty_def}
	Given a surface $\Sigma$, a map $h \in \MCG(\Sigma)$ is called a \emph{translation} if there are two distinct ends $e_+$ and $e_-$ of $\Sigma$ such that for any curve $\alpha$ on $\Sigma$, $\lim_{n \to \infty} h^n(\alpha) = e_+$ and $\lim_{n \to \infty} h^{-n}(\alpha) = e_-$. If such a translation exists, we call the surface $\Sigma$ \emph{translatable}.
\end{defn}

\begin{remark}
	This definition brings to mind several other classes of infinite-type surfaces with two special ends that have recently been defined for various reasons.

	\begin{itemize}
		\item The \emph{telescoping} surfaces of Mann and Rafi \cite{mr} form a strict subset of the translatable surfaces: though every telescoping surface can be shown to be translatable, the Jacob's ladder surface is translatable but not telescoping.
		\item The \emph{doubly pointed} surfaces of Aougab, Patel, and Vlamis \cite{apv} are a strict superset of the translatable surfaces whose mapping class groups are not coarsely bounded: every such translatable surface is doubly pointed, but the surface with zero genus and end space homeomorphic to $\omega^\omega 2 + 1$ is doubly pointed but not translatable.\footnote{There are easier counterexamples, e.g.\ a surface with two inequivalent maximal ends (see Lemma \ref{two_diff_ends}). This example demonstrates however that a doubly pointed surface may not be translatable even if it has exactly two equivalent maximal ends.}
		\item The \emph{avenue} surfaces of Horbez, Qing, and Rafi \cite{hqr} turn out to be precisely those translatable surfaces that have tame end space and whose mapping class groups are not coarsely bounded. This result is part of Theorem \ref{only_shifty}.
	\end{itemize}
\end{remark}

It follows directly from the definition that a translatable surface $\Sigma$ cannot contain any finite-type nondisplaceable surfaces, and so in particular $\Sigma$ cannot have finite type, finite positive genus, or any ends with a finite $MCG(\Sigma)$-orbit of size more than $2$. This gives us lots of non-examples, but there are also plenty of translatable surfaces if we go looking for them.

The Jacob's ladder surface and the bi-infinite flute, in Figures \ref{ladder} and \ref{flute}, are clearly translatable; we can think of the flute as being derived from the ladder by replacing each handle with a puncture. More generally, we might replace each handle in the ladder with some other surface, as follows. Let $S$ be any surface, not necessarily of finite type, with two compact boundary components, and let $\Sigma = S^{\natural \zah}$ be the gluing along their boundaries\footnote{The use of $\natural$ here is intended to invoke the standard use of $\#$ for connected sum, and was suggested to me on Facebook by Rylee Lyman.} of countably many copies of $S$, arranged like \zah\ as in Figure \ref{general_shiftiness}. Then the map that takes each copy of $S$ to the next one over is a translation, and so $\Sigma$ is translatable.

\begin{figure}
	\includegraphics[width=\textwidth]{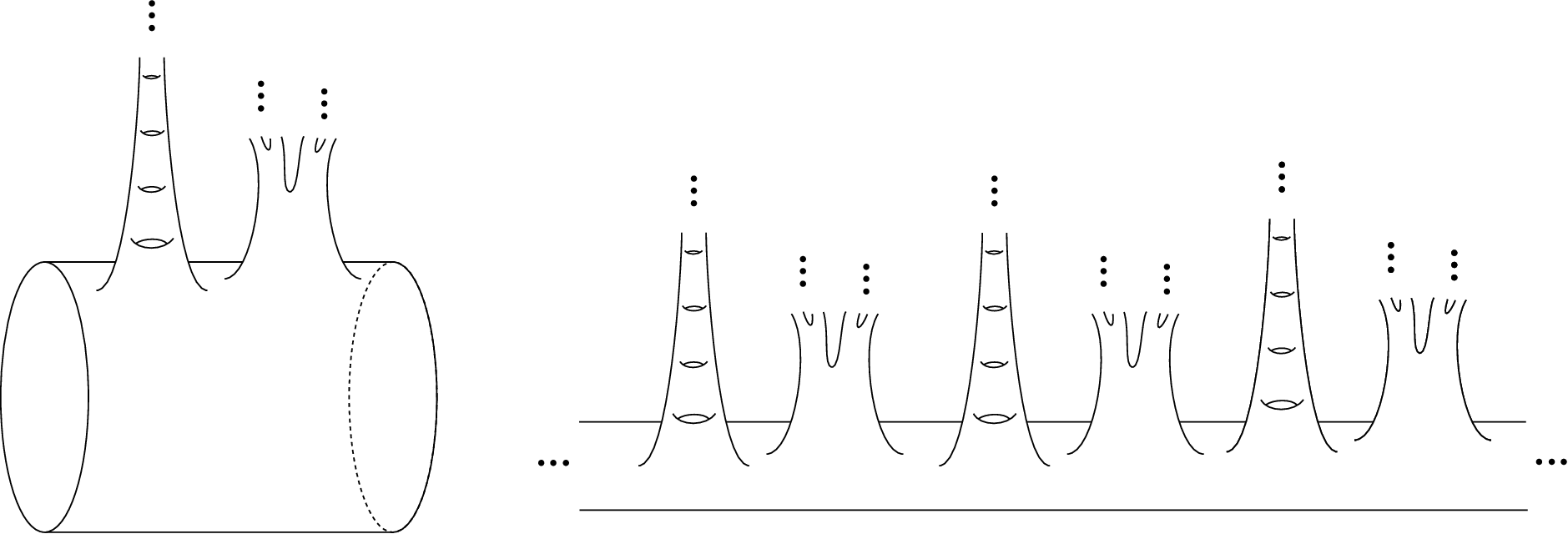}
	\caption{On the left, a surface $S$ with two boundary components (in this case, the connected sum of an annulus, a Loch Ness Monster, and a Cantor tree). On the right, the translatable surface $S^{\natural \zah}$. \label{general_shiftiness}}
\end{figure}

A natural question to ask is whether this last example includes all translatable surfaces, and in fact it does. However, we may have to choose a different translation. For this and future results, the following notation will be useful:

\begin{defn}
	Suppose $\Sigma$ is a translatable surface and $\alpha$ a curve in $\Sigma$ separating $e_+$ and $e_-$. Then we denote by $\alpha_+$ (resp.\ $\alpha_-$) the component of $\Sigma \setminus \alpha$ containing the end $e_+$ (resp.\ $e_-$). If $\beta$ is a curve separating $\alpha$ and $e_+$, then we denote by $(\alpha, \beta)$ the subsurface $\alpha_+ \cap \beta_-$ of $\Sigma$ bounded by $\alpha$ and $\beta$.
\end{defn}

\begin{prop}\label{segments}
	Let $\Sigma$ be a translatable surface with translation $h$ and $\alpha$ a curve separating the ends $e_+$ and $e_-$. Then there is a surface $S = (\alpha, h^N(\alpha))$ for some $N$ such that $\Sigma$ is homeomorphic to $S^{\natural \zah}$.
\end{prop}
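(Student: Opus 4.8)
The plan is to use the translation $h$ to pick out an exhausting bi-infinite sequence of pairwise-disjoint curves and then show that the pieces they cut out are all homeomorphic, by an argument that leverages the north-south dynamics of $h$ together with the classification of surfaces. First I would fix the curve $\alpha$ separating $e_+$ and $e_-$, and consider the curves $h^n(\alpha)$ for $n \in \zah$. The key preliminary observation is that, after isotopy, we may assume the $h^n(\alpha)$ are pairwise disjoint and that they are \emph{nested} in the sense that $h^{n+1}(\alpha)$ separates $h^n(\alpha)$ from $e_+$: indeed, since $\lim_{n\to\infty} h^n(\alpha) = e_+$, the curve $h(\alpha)$ is eventually pushed off any neighborhood of $e_-$, and disjointness of a curve from all its forward and backward translates is a standard consequence of north-south dynamics (one isotopes $\alpha$ to be disjoint from $h(\alpha)$, then applies powers of $h$). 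Having done this, I would argue that the curves $\{h^n(\alpha)\}_{n\in\zah}$ \emph{exhaust} $\Sigma$: any compact set $K$ meets only finitely many of the complementary pieces, because $\lim h^n(\alpha) = e_+$ and $\lim h^{-n}(\alpha) = e_-$ forces $K$ to be eventually disjoint from each tail, hence contained in some $(h^{-M}(\alpha), h^M(\alpha))$.

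Next I would set $S = (\alpha, h^N(\alpha))$ for a value of $N$ to be chosen (possibly $N=1$ after replacing $h$ by a power, which is itself a translation). The heart of the argument is to show $(h^{kN}(\alpha), h^{(k+1)N}(\alpha))$ is homeomorphic to $S$ for every $k$, with the homeomorphisms compatible along the boundary circles. The map $h^{kN}$ carries $(\alpha, h^N(\alpha))$ to $(h^{kN}(\alpha), h^{(k+1)N}(\alpha))$ by a homeomorphism matching up boundary components and respecting the end-space decomposition, so all the pieces are homeomorphic to $S$ via these restrictions of powers of $h$. Gluing these homeomorphisms along the shared boundary curves $h^{kN}(\alpha)$ — where they automatically agree up to isotopy, since a homeomorphism of a circle is determined up to isotopy by orientation — produces a homeomorphism $S^{\natural \zah} \to \Sigma$. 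One must check the genus and end-space data match: the genus of $\Sigma$ is the sum of the genera of the pieces (all equal to that of $S$), so it is $0$ or $\infty$, matching $S^{\natural\zah}$; and $E(\Sigma)$ is the inverse limit / union of the piecewise end spaces arranged in a $\zah$-pattern, which is exactly $E(S^{\natural\zah})$.

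Why must $N$ exist at all, i.e.\ why is $S$ of the required form (two compact boundary components, the right genus)? Each $(\alpha, h^N(\alpha))$ has exactly the two compact boundary curves $\alpha$ and $h^N(\alpha)$ by construction. The only real content is that it has finite type or at least the homeomorphism type stabilizes — but actually no finiteness is needed: $S$ is allowed to be infinite type in the statement, so \emph{any} $N \geq 1$ works and one can simply take $N = 1$. Thus the choice of $N$ in the statement is cosmetic flexibility; I would take $N=1$.

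The main obstacle I anticipate is the disjointness-and-nesting step: making precise that a translation's defining convergence property lets us isotope $\alpha$ so that $\alpha, h(\alpha), h^2(\alpha), \dots$ and the backward translates are all simultaneously disjoint and linearly ordered by separation. The cleanest route is: first isotope so $\alpha$ and $h(\alpha)$ are disjoint (possible because $h(\alpha)$ can be pushed into a small neighborhood of $e_+$ disjoint from $\alpha$, using $\lim h^n(\alpha)=e_+$ and that $\alpha$ separates $e_+$ from $e_-$ so lies in the complement of some neighborhood of $e_+$); then apply $h^n$ to get $h^n(\alpha)$ disjoint from $h^{n+1}(\alpha)$ for all $n$; finally observe that disjointness of consecutive translates plus the convergence to distinct ends forces the global nesting and that non-consecutive translates are disjoint too (two disjoint separating curves with the $e_+$-sides nested cannot have their further translates cross, by iterating). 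A secondary subtlety is verifying the glued map is a genuine homeomorphism and not just a bijection — this follows from the boundary homeomorphisms agreeing up to isotopy and patching, but it should be stated carefully.
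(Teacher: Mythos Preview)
Your overall architecture matches the paper's: pass to a power of $h$ to get a bi-infinite nested family $\{h^{n}(\alpha)\}$ of disjoint separating curves, check that these exhaust $\Sigma$, and observe that $h$ itself carries each piece $(h^{n}(\alpha),h^{n+1}(\alpha))$ homeomorphically to the next. The exhaustion argument and the gluing are fine.

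The gap is in the disjointness step, and it is tied to your misreading of the role of $N$. You write that one can ``isotope so $\alpha$ and $h(\alpha)$ are disjoint'' because ``$h(\alpha)$ can be pushed into a small neighborhood of $e_+$ disjoint from $\alpha$, using $\lim h^n(\alpha)=e_+$.'' This is not valid: the convergence $\lim_{n\to\infty} h^n(\alpha)=e_+$ is a statement about \emph{high powers} $h^n(\alpha)$, not about $h(\alpha)$ itself, and isotopy cannot change the geometric intersection number $i(\alpha,h(\alpha))$. There is no reason the isotopy classes $[\alpha]$ and $h_*[\alpha]$ should have disjoint representatives for a general translation $h$; concretely, on the Jacob's ladder one can take $\alpha$ to be a separating curve that has been dragged through several handles so that it essentially intersects its shift. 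What the convergence \emph{does} give is an $N$ with $h^N(\alpha)\subset\alpha_+$ (after isotopy), i.e.\ $h^N(\alpha)$ disjoint from $\alpha$; then one replaces $h$ by $h^N$ and proceeds. This is precisely why the $N$ appears in the statement---it is not ``cosmetic flexibility,'' and your conclusion ``any $N\ge 1$ works, take $N=1$'' is wrong. You actually say the correct thing in your parenthetical (``after replacing $h$ by a power''); make that the argument and delete the isotopy claim.

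Once you have $h^N(\alpha)\subset\alpha_+$, your nesting argument goes through: applying $h^N$ gives $h^{2N}(\alpha)\subset h^N(\alpha)_+\subset\alpha_+$, and inductively all the $h^{kN}(\alpha)$ are pairwise disjoint and linearly ordered. The paper's write-up is then very short: with $S=(\alpha,h^N(\alpha))$ one checks each point of $\Sigma$ lies in a unique translate $h^{kN}(S)$ (your exhaustion paragraph), and the gluing is immediate because the homeomorphisms between pieces are restrictions of powers of a single ambient homeomorphism, so compatibility on the boundary circles is automatic---your worry about ``agreeing up to isotopy'' on the circles does not arise.
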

\begin{proof}
	By the definition of translation, there is some $N \in \nat$ such that for all $n \ge N$, $h^n(\alpha) \subseteq \alpha_+$. Then $h^N$ is also a translation, so without loss of generality we can replace $h$ with $h^N$ and assume that all $h^i(\alpha)$ are disjoint.

	Let $S = (\alpha, h(\alpha))$. If $x \in \Sigma$ but not in any $h^i(\alpha)$, then $x$ is either in $\alpha_+$ or $\alpha_-$. Supposing without loss of generality that $x \in \alpha_+$, there must be some least $i$ such that $x \not\in h^i(\alpha)_+$, otherwise $h^i(\alpha)$ could not converge to $e_+$. Then $x \in (h^{i-1}(\alpha), h^i(\alpha)) = h^{i-1}(S)$. On the other hand, $h^i(S) \cap S = \emptyset$ for all $i \ne 0$ by construction, and so every point of $\Sigma$ is in exactly one $h^i(S)$ or $h^i(\alpha)$.

	The resulting subsurface $S$ has  two boundary components, and the copies of $S$ are glued together exactly as desired.
\end{proof}

This decomposition depended on the choice of a curve separating $e_+$ and $e_-$. We might ask how important that choice was, and it turns out the answer is ``not much''.

\begin{lemma}\label{transitive}
	Let $\Sigma$ be a translatable surface, and $\alpha$ and $\beta$ two curves separating the ends $e_+$ and $e_-$. Then there is some $f \in \MCG(\Sigma)$ which fixes the ends $e_+$ and $e_-$ and such that $f(\alpha) = \beta$.
\end{lemma}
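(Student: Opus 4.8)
The plan is to reduce the statement to a controlled case by first moving both $\alpha$ and $\beta$ far toward $e_+$ using powers of the translation $h$, then handling the resulting bounded configuration directly. More precisely, since $\lim_{n\to\infty} h^n(\alpha) = e_+$ and $\lim_{n\to\infty} h^n(\beta) = e_+$, there is some $N$ large enough that $h^N(\alpha)$ and $h^N(\beta)$ are both contained in the component of $\Sigma \setminus \gamma$ containing $e_+$, for any fixed reference curve $\gamma$ separating the ends; more usefully, we can arrange that $h^N(\alpha)$ and $h^N(\beta)$ together bound, with $\alpha$, a finite-type (indeed compact-boundary, but we want finite-type) piece of surface, or at least that they are both ``past'' $\alpha$ in the sense of the $(\cdot,\cdot)$ notation. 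The point of this step is that once two separating curves cobound a subsurface, their complements on the $e_+$ side (resp. $e_-$ side) are homeomorphic by Richards' classification applied to the two sides, since both sides carry the ``same'' end space data.

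First I would make the following key observation: if $\alpha$ and $\beta$ are disjoint separating curves with $\beta \subseteq \alpha_+$, then the subsurface $(\alpha,\beta)$ has two boundary components, $\alpha_-$ is a one-holed neighborhood of the $e_-$-side of the surface, and $\beta_+$ is a one-holed neighborhood of the $e_+$-side. Now $\alpha_-$ and "$\beta_-$ restricted past $\alpha$'' — no; cleaner: the subsurface $\beta_- = \Sigma\setminus\beta_+$ is homeomorphic to $\alpha_-$ because both are obtained from $\Sigma$ by cutting along a single separating curve and keeping the $e_-$ side, and they have the same genus (zero or infinite, since $\Sigma$ is translatable and hence has no finite positive genus) and the same end space $E\setminus(\text{small neighborhood of }e_+)$ up to homeomorphism — here is where translatability is used, since iterating $h$ shows the end space of $\Sigma$ is "shift-invariant'' and so the two truncations agree. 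By the change-of-coordinates principle (Richards: any homeomorphism of the pair $(E,E_G)$, here a homeomorphism of the end spaces of the two subsurfaces matching boundary, extends to the surfaces), there is a homeomorphism $\Sigma \to \Sigma$ carrying $\alpha$ to $\beta$; composing with the rotation if needed we can demand it fix $e_+$ and $e_-$ setwise, which since these are individual ends means fix them.

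The main obstacle is the case where $\alpha$ and $\beta$ are \emph{not} disjoint, or not nested — i.e. reducing the general case to the nested disjoint case. The clean fix is to first replace $\beta$ by a curve $\beta'$ that is disjoint from $\alpha$, separates $e_+$ from $e_-$, and lies in $\alpha_+$: take $\beta' = h^n(\beta)$ for $n$ large, which by the convergence $\lim h^n(\beta) = e_+$ eventually lies in $\alpha_+$ and is disjoint from $\alpha$; since $h$ fixes $e_\pm$ and $h^n(\beta)$ separates them, $\beta'$ is as desired. Then apply the nested-case argument twice: once to get $f_1$ with $f_1(\alpha) = \beta'$ fixing $e_\pm$, once more — using that $\beta$ and $\beta' = h^n(\beta)$ cobound the finite-type piece $(\beta, h^n(\beta))$, so the same nested argument (or simply $h^{-n}$ composed with an appropriate correction, though $h^{-n}(\beta') = \beta$ already, so in fact $f = h^{-n}\circ f_1'$ where $f_1'(\alpha)=\beta'$ gives $f(\alpha) = h^{-n}(\beta') = \beta$ directly, and $h^{-n}$ fixes $e_\pm$). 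So in the end only one application of the homeomorphism-extension argument is needed, preceded by a push via $h^n$.

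Let me restate the skeleton in order: (1) replace $\beta$ by $h^n(\beta)=:\beta'$ for $n$ large so that $\beta'$ is disjoint from $\alpha$ and contained in $\alpha_+$; it still separates $e_+$ from $e_-$; (2) observe $\alpha_-$ and $\beta'_-$ are homeomorphic as surfaces-with-one-boundary-component preserving the end data, using translatability to identify their end spaces, and similarly $\alpha_+ \cong \beta'_+$; (3) glue these homeomorphisms along the cobounded finite-type region $(\alpha,\beta')$ — or more simply, directly build a homeomorphism of $E(\Sigma)$ swapping the relevant clopen pieces and invoke Richards to extend it to $f_1 \in \MCG(\Sigma)$ with $f_1(\alpha) = \beta'$ and $f_1$ fixing $e_+, e_-$; (4) set $f = h^{-n} \circ f_1$, so $f(\alpha) = h^{-n}(\beta') = \beta$ and $f$ fixes $e_\pm$ since both $h^{-n}$ and $f_1$ do. The one place demanding genuine care is step (2)–(3): verifying that the two one-holed complementary pieces really are abstractly homeomorphic, which comes down to checking that chopping off a clopen neighborhood of $e_+$ versus a slightly larger one yields homeomorphic end spaces — this is exactly the content of $\Sigma$ being built as $S^{\natural\zah}$ (Proposition \ref{segments}), so I would cite that decomposition to make the end-space bookkeeping transparent.
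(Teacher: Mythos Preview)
Your reduction to the nested case via powers of $h$, and the final composition with $h^{-n}$, match the paper's approach. The gap is in your step (2)--(3): you assert that $\alpha_\pm \cong \beta'_\pm$ as one-holed surfaces and plan to justify this by ``citing Proposition~\ref{segments}''. But that proposition only produces a decomposition $\Sigma = S^{\natural\zah}$ adapted to the \emph{single} curve $\alpha$; it does not tell you why the half-surface cut off by a \emph{different} separating curve $\beta'$ should be homeomorphic to the half cut off by $\alpha$. Concretely, $E(\beta'_-) = E(\alpha_-) \sqcup E((\alpha,\beta'))$, and nothing you have written explains why this extra clopen piece can be absorbed. Your alternative of building a homeomorphism of $E(\Sigma)$ and invoking Richards is also insufficient: Richards extends an end-space homeomorphism to \emph{some} self-homeomorphism of $\Sigma$, but does not let you prescribe the image of a particular curve. (Note too that Lemma~\ref{transitive} is proved without any tameness hypothesis, so you cannot fall back on stable neighborhoods.)

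The paper supplies exactly the missing step with a short trick you did not find. After arranging $\beta \subseteq (\alpha, h(\alpha))$ by replacing $h$ with a power, set $X=(\alpha,\beta)$, $Y=(\beta,h(\alpha))$, and $Z=(h(\alpha),h(\beta))$. Then the two segment surfaces from Proposition~\ref{segments} are $S=(\alpha,h(\alpha))=X\natural Y$ and $T=(\beta,h(\beta))=Y\natural Z$; since $Z=h(X)\cong X$, one gets $S\cong T$. Now the $\zah$-indexed decompositions adapted to $\alpha$ and to $\beta$ use homeomorphic building blocks, and the block-by-block map sends each copy of $S$ to the corresponding copy of $T$, taking $\alpha$ to $\beta$ and fixing $e_\pm$. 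That argument is the actual content behind ``making the end-space bookkeeping transparent'', and it is genuinely more than a citation of Proposition~\ref{segments}.
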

\begin{proof}
	First, replace $\beta$ with some $h^n(\beta)$ so that $\beta \subseteq \alpha_+$, and then replace $h$ with some power of $h$ so that $\beta \subseteq (\alpha, h(\alpha))$. By Proposition \ref{segments}, we can write $\Sigma = S^{\natural \zah} = T^{\natural \zah}$, where $S = (\alpha, h(\alpha))$ and $T = (\beta, h(\beta))$.

	If we let $X = (\alpha, \beta)$, $Y = (\beta, h(\alpha))$, and $Z = (h(\alpha), h(\beta))$, then $(\alpha, h(\alpha)) = X \natural Y$ and $(\beta, h(\beta)) = Y \natural Z$, where by $A \natural B$ we mean the surface obtained by gluing the surfaces $A$ and $B$ along a single boundary component. But $X$ and $Z$ are homeomorphic, and thus so are $S = (\alpha, h(\alpha))$ and $T = (\beta, h(\beta))$. It follows that we can map each copy of $S$ to the appropriate copy of $T$, giving us a homeomorphism of $\Sigma$ that takes $\alpha$ to $\beta$ and fixes the ends $e_+$ and $e_-$.
\end{proof}

There is one more symmetry of a translatable surface worth discussing here:

\begin{lemma}\label{rotation}
	Let $\Sigma$ be a translatable surface, and $\alpha$ a curve separating the ends $e_+$ and $e_-$. Then there is some $r \in \MCG(\Sigma)$ that transposes $e_+$ and $e_-$ and restricts to an orientation-reversing homeomorphism on $\alpha$.
\end{lemma}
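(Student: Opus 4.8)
The plan is to build the desired reflection $r$ directly from the structure provided by Proposition \ref{segments}, exploiting the fact that the bi-infinite arrangement $S^{\natural\zah}$ has an obvious order-reversing symmetry of the indexing set $\zah$. First I would invoke Proposition \ref{segments} to write $\Sigma \cong S^{\natural\zah}$ where $S = (\alpha, h^N(\alpha))$ for a suitable translation power, and I would relabel the copies of $S$ as $S_i$, $i \in \zah$, glued along boundary curves $\gamma_i$ with $\gamma_0 = \alpha$, so that $\gamma_i \to e_+$ as $i \to +\infty$ and $\gamma_i \to e_-$ as $i \to -\infty$. The naive idea is to send $S_i$ to $S_{-i}$, reversing the order of the pieces while flipping orientation; under such a map the two ``ends at infinity'' of the chain get swapped, which is exactly the transposition of $e_+$ and $e_-$ we want, and the central curve $\gamma_0 = \alpha$ is sent to itself by an orientation-reversing homeomorphism.

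The key steps, in order, would be: (1) produce an orientation-reversing homeomorphism $\rho\colon S \to S$ that swaps the two boundary components of $S$. This is the one genuinely non-formal point — see the obstacle discussion below. (2) Choose, for each $i$, an orientation-reversing homeomorphism $S_i \to S_{-i}$ compatible with $\rho$, taking the boundary curve $\gamma_i$ to $\gamma_{-i}$ and $\gamma_{i+1}$ to $\gamma_{-i-1}$; since all the $S_i$ are homeomorphic copies of $S$ and $\rho$ already swaps $S$'s two boundary circles, these piece-maps can be chosen to agree on the shared boundary curves $\gamma_i$, so they glue to a homeomorphism $r\colon \Sigma \to \Sigma$. (3) Verify that $r$ is orientation-reversing on each piece, hence globally reverses orientation — but note the lemma only asks that $r$ be an element of $\MCG(\Sigma)$ and that it reverse orientation \emph{on $\alpha$}, not globally; if the paper's $\MCG$ is orientation-preserving one instead takes $r$ to be orientation-reversing on a neighborhood of $\alpha$ and arranges it to be orientation-preserving overall by a further adjustment, or one simply observes that reflecting the chain and composing with a global orientation-reversal of each piece still swaps the ends. (4) Track the ends: a neighborhood basis of $e_+$ is given by the half-chains $\bigcup_{i \ge n} S_i$, which $r$ maps to $\bigcup_{i \le -n} S_{-i} = \bigcup_{j \le -n} S_j$, a neighborhood basis of $e_-$; so $r(e_+) = e_-$ and symmetrically $r(e_-) = e_+$. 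Combined with $r(\alpha) = \alpha$ and orientation-reversal there, this gives the claim.

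The main obstacle is step (1): why does $S$ admit an orientation-reversing self-homeomorphism swapping its two boundary circles? A clean way around it is to avoid demanding a symmetry of $S$ itself and instead use a ``barycentric'' subdivision: replace the decomposition by the finer one in which $\alpha$ is placed at the \emph{center} of a piece. Concretely, apply Lemma \ref{transitive} and Proposition \ref{segments} with the roles arranged so that $\alpha$ lies in the interior of $S_0$ and $S_0$ is cut by $\alpha$ into two homeomorphic halves $(\gamma_0, \alpha)$ and $(\alpha, \gamma_1)$ — this homeomorphy is exactly the content used in the proof of Lemma \ref{transitive}, where $X = (\alpha,\beta)$ and $Z = (h(\alpha), h(\beta))$ were shown homeomorphic. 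Then the order-reversing relabeling $i \mapsto -i$ together with the flip of the two halves of $S_0$ across $\alpha$ is manifestly available: one only needs a homeomorphism between the two halves, not a symmetry swapping boundary components of a single piece, and $\alpha$ is visibly sent to itself orientation-reversingly because it is the ``mirror line.'' The remaining verifications — that the piece-maps glue, that ends are swapped — are then routine and proceed as in steps (2) and (4) above.
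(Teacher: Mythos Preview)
Your overall strategy---use Proposition \ref{segments} to see that $\alpha_+$ and $\alpha_-$ are homeomorphic and then swap them---is exactly what the paper does. But you overcomplicate the construction and never cleanly resolve the orientation issue, which is the one genuine point of the lemma.

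In this paper $\MCG(\Sigma)$ consists of \emph{orientation-preserving} homeomorphisms, so $r$ must preserve orientation on $\Sigma$ while reversing orientation on the curve $\alpha$. Your piece-by-piece reflection $S_i \mapsto S_{-i}$ built from orientation-reversing maps yields a globally orientation-reversing homeomorphism, as you acknowledge in step (3). Your first proposed fix (``arrange it to be orientation-preserving overall by a further adjustment'') is too vague to count. Your second fix---compose with an orientation-reversal on each piece---does produce an orientation-preserving map of $\Sigma$, but it also composes two orientation-reversing maps on $\alpha = \gamma_0$, so $r|_\alpha$ becomes orientation-preserving and the conclusion is lost. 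The ``barycentric'' variant at the end still speaks of a ``flip'' across $\alpha$ as a ``mirror line,'' which is again the orientation-reversing picture; you have not explained how to get a map that is orientation-preserving on the surface yet orientation-reversing on $\alpha$.

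The paper's argument sidesteps all of this in one line. On a tubular neighborhood of $\alpha$---an annulus---take $r$ to be the rotation by $\pi$ about a diameter of $\alpha$ (rotate a cylinder in $3$-space about an axis through its center, perpendicular to its core circle). This rotation is orientation-\emph{preserving} on the annulus, swaps its two boundary circles, and restricts to an orientation-\emph{reversing} map on the core curve $\alpha$. Since $\alpha_+$ and $\alpha_-$ are homeomorphic by Proposition \ref{segments} (each is a half-infinite chain of copies of $S$), $r$ extends to all of $\Sigma$. The local model on the annulus handles the orientation bookkeeping automatically; there is no need to match pieces $S_i$ individually or to hunt for a self-symmetry of $S$.
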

\begin{proof}
	In a tubular neighborhood of $\alpha$, $r$ is just a rotation by $\pi$ about a diameter of the circle $\alpha$---see Figure \ref{rot_fig}. By Proposition \ref{segments}, $\alpha_+$ and $\alpha_-$ are homeomorphic, so this $r$ can be extended to a homeomorphism on all of $\Sigma$.
\end{proof}

\begin{figure}
	\center\includegraphics[width=\textwidth/2]{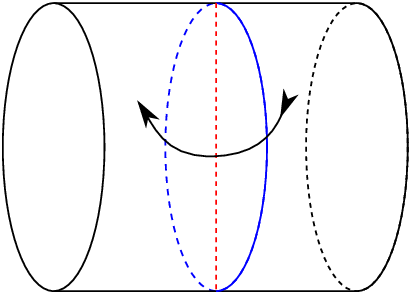}
	\caption{Within a tubular neighborhood of the blue curve $\alpha$, the curve's orientation can be reversed via a rotation by $\pi$ about its red diameter. \label{rot_fig}}
\end{figure}

The following lemma will be quite useful in light of Fact \ref{cb_nbhd}.

\begin{lemma}\label{shift_enough}
	Let $\Sigma$ be a translatable surface with translation $h$, and $\alpha$ a curve separating the ends $e_+$ and $e_-$. Then for any identity neighborhood $V$ in $\MCG(\Sigma)$, there is some $n \in \nat$ such that $V_{\alpha_-} \subseteq h^{-n} V h^n$ and $V_{\alpha_+} \subseteq h^n V h^{-n}$.
\end{lemma}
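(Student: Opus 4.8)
The plan is to use the fact that $V_{\alpha_-}$ (resp.\ $V_{\alpha_+}$) consists of mapping classes supported on $\alpha_-$ (resp.\ $\alpha_+$), and that conjugating by $h^{-n}$ translates the support of such a class toward $e_-$ (resp.\ $e_+$), so that eventually the support is swallowed by the finite-type subsurface $S$ whose complement defines a basic identity neighborhood contained in $V$. First I would observe that it suffices to prove the statement for identity neighborhoods of the form $V = V_S$ with $S \subseteq \Sigma$ of finite type, since these form a neighborhood basis of the identity in $\MCG(\Sigma)$; if $V_S \subseteq V$ and $V_{\alpha_-} \subseteq h^{-n} V_S h^n$ then also $V_{\alpha_-} \subseteq h^{-n} V h^n$. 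By symmetry (using Lemma \ref{rotation} to conjugate, or just running the argument with $h^{-1}$ in place of $h$), it is enough to handle the first inclusion $V_{\alpha_-} \subseteq h^{-n} V_S h^n$.

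So fix a finite-type subsurface $S \subseteq \Sigma$. Because $S$ is compact, it is contained in $\beta_-$ for some curve $\beta$ separating $e_+$ and $e_-$ (any curve separating the ends cuts off a neighborhood of $e_+$ disjoint from the compact set $S$, after pushing $\beta$ far enough toward $e_+$). By the definition of translation, $\lim_{n\to\infty} h^{-n}(\alpha) = e_-$, so there is some $n \in \nat$ with $h^{-n}(\alpha) \subseteq \beta_-$; equivalently $h^{-n}(\alpha_-) \subseteq \beta_-$, so $h^{-n}(\alpha_-)$ is disjoint from $S$. Now take any $g \in V_{\alpha_-}$, represented by a homeomorphism restricting to the identity outside $\alpha_-$. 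Then $h^{-n} g h^{n}$ is represented by a homeomorphism restricting to the identity outside $h^{-n}(\alpha_-)$, hence restricting to the identity on a neighborhood of $S$; that is, $h^{-n} g h^{n} \in V_S$. Therefore $h^n V_{\alpha_-} h^{-n} \subseteq V_S$, i.e.\ $V_{\alpha_-} \subseteq h^{-n} V_S h^n$, which is what we wanted.

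The only point requiring care is the claim that conjugating a mapping class supported on $\alpha_-$ by $h^{-n}$ yields a mapping class supported on $h^{-n}(\alpha_-)$: this is the standard fact that $\phi V_{U} \phi^{-1} = V_{\phi(U)}$ for any homeomorphism $\phi$ and any subsurface $U$, applied with $\phi = h^{-n}$ (and it is the reason the statement is phrased with the conjugate neighborhood rather than a single fixed neighborhood). I expect no genuine obstacle here; the main thing to get right is the bookkeeping of which end $h$ pushes toward, and hence which of $V_{\alpha_-}, V_{\alpha_+}$ pairs with $h^{-n} V h^n$ versus $h^n V h^{-n}$ — and that this $n$ can be chosen uniformly for both inclusions by taking the larger of the two (using that a single $n$ pushes $h^{-n}(\alpha)$ into $\beta_-$ and $h^{n}(\alpha)$ past the analogous curve on the $e_+$ side).
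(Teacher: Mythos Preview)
Your approach is exactly the paper's, but you have the meaning of $V_{\alpha_-}$ reversed, and this sign error propagates through the argument. By the paper's convention, $V_S$ is the set of mapping classes restricting to the identity \emph{on} $S$; hence $V_{\alpha_-}$ consists of classes that are the identity on $\alpha_-$, i.e.\ supported in $\alpha_+$, not ``supported on $\alpha_-$'' as you write in the first and third paragraphs. With the correct reading, for $g\in V_{\alpha_-}$ the relevant conjugate is $h^{n} g h^{-n}$, which is the identity on $h^{n}(\alpha_-)$; since $h^{n}(\alpha)\to e_+$, the region $h^{n}(\alpha_-)$ eventually contains any fixed finite-type $S$, giving $h^{n} V_{\alpha_-} h^{-n}\subseteq V_S$ and hence $V_{\alpha_-}\subseteq h^{-n}V_S h^{n}$. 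This is precisely the paper's computation.

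As you wrote it, the argument actually breaks: you place $S\subseteq\beta_-$ and then assert ``$h^{-n}(\alpha_-)\subseteq\beta_-$, so $h^{-n}(\alpha_-)$ is disjoint from $S$'', which is a non sequitur since both sets lie on the same side of $\beta$. There is also an internal inconsistency: you compute that $h^{-n} g h^{n}\in V_S$ and then conclude $h^{n} V_{\alpha_-} h^{-n}\subseteq V_S$, which is the opposite conjugation. All of this is fixed immediately once you correct the interpretation of $V_{\alpha_-}$; no new idea is needed, and the symmetry remark for $V_{\alpha_+}$ then goes through as you indicate.
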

\begin{proof}
	By the topology of $\MCG(\Sigma)$, there is some finite-type subsurface $T \subseteq \Sigma$ such that $V_T \subseteq V$. Let $n \in \nat$ so that $T \subseteq h^n(\alpha)_-$. Then we have
	\begin{align*}
		T &\subseteq h^n(\alpha)_- \\
		V_{h^n(\alpha)_-} &\subseteq V_T \\
		V_{h^n(\alpha)_-} &\subseteq V \\
		h^n V_{\alpha_-} h^{-n} &\subseteq V \\
		V_{\alpha_-} &\subseteq h^{-n} V h^n \\
	\end{align*}
	and likewise $V_{\alpha_+} \subseteq h^n V h^{-n}$.
\end{proof}

\begin{cor}\label{stabilizer}
	Let $\Sigma$ be a translatable surface with translation $h$, and $\alpha$ a curve separating the ends $e_+$ and $e_-$. Then the set $H = \set{f \in \MCG(\Sigma) \mid \text{$f(\alpha)$ is homotopic to $\alpha$}}$ of mapping classes stabilizing $\alpha$ is coarsely bounded.
\end{cor}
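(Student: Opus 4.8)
The plan is to exhibit $H$ as a finite union of uniformly bounded products of the identity neighborhoods $V_{\alpha_+}$ and $V_{\alpha_-}$, and then conclude with Fact \ref{cb_nbhd} via Lemma \ref{shift_enough}.

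First I would decompose $H$. Since $\alpha$ separates $e_+$ from $e_-$, any $f \in H$ either preserves the two complementary pieces $\alpha_+, \alpha_-$ or swaps them. Let $H_0 \subseteq H$ be the set of mapping classes admitting a representative that fixes $\alpha$ setwise and preserves each side. The rotation $r$ of Lemma \ref{rotation} lies in $H$, fixes $\alpha$ setwise, and interchanges $\alpha_+$ and $\alpha_-$ (it transposes the ends $e_+,e_-$ they contain); hence if $f$ swaps the sides then $r^{-1}f \in H_0$, so $H = H_0 \cup rH_0$. Next I claim that $H_0$ is exactly the set of mapping classes having a representative that fixes $\alpha$ \emph{pointwise}, and that this set equals $V_{\alpha_-}\cdot V_{\alpha_+}$. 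For the first claim, the inclusion $\supseteq$ is immediate (an orientation-preserving homeomorphism fixing $\alpha$ pointwise fixes its normal orientation, hence preserves the sides), and for $\subseteq$ one notes that a representative fixing $\alpha$ setwise and preserving sides restricts on $\alpha$ to an orientation-preserving homeomorphism of the circle, which is isotopic to the identity; spreading this isotopy over a collar of $\alpha$ modifies the representative within $\Sigma$, without changing its class, to one that fixes $\alpha$ pointwise. (In particular $T_\alpha$, which has a representative supported in a one-sided collar of $\alpha$ and fixing $\alpha$ pointwise, already lies in $V_{\alpha_-}$, so no $\zah$-family of Dehn-twist cosets appears here.) Finally, given a representative $\psi$ fixing $\alpha$ pointwise, cutting along $\alpha$ lets us write $\psi = a\,b$, where $a$ equals $\psi$ on $\ol{\alpha_+}$ and the identity on $\ol{\alpha_-}$ (so $a \in V_{\alpha_-}$) and $b$ equals $\psi$ on $\ol{\alpha_-}$ and the identity on $\ol{\alpha_+}$ (so $b \in V_{\alpha_+}$); conversely any such product fixes $\alpha$ pointwise. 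Thus $H = V_{\alpha_-}V_{\alpha_+} \cup r\,V_{\alpha_-}V_{\alpha_+}$.

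To conclude, let $V$ be any identity neighborhood of $\MCG(\Sigma)$. By Lemma \ref{shift_enough} there is $n \in \nat$ with $V_{\alpha_-} \subseteq h^{-n}Vh^{n}$ and $V_{\alpha_+}\subseteq h^{n}Vh^{-n}$, so $V_{\alpha_-}V_{\alpha_+} \subseteq h^{-n}V h^{2n} V h^{-n}$. Setting $F = \set{\id,\, h^{-n},\, h^{2n},\, r h^{-n}}$, a finite subset of $\MCG(\Sigma)$, we get $V_{\alpha_-}V_{\alpha_+} \subseteq (FV)^3$ and $r\,V_{\alpha_-}V_{\alpha_+} \subseteq (FV)^3$, hence $H \subseteq (FV)^3$. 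Since $V$ was arbitrary, Fact \ref{cb_nbhd} shows $H$ is coarsely bounded.

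I expect the only real subtlety to be the middle step of topological bookkeeping: verifying that the side-swapping classes are all handled by the single element $r$ and that twisting around $\alpha$ is absorbed into $V_{\alpha_-}\cdot V_{\alpha_+}$ rather than forcing infinitely many cosets. This is routine surface topology; once the decomposition $H = V_{\alpha_-}V_{\alpha_+} \cup r\,V_{\alpha_-}V_{\alpha_+}$ is in hand, the quantitative part is immediate from Lemma \ref{shift_enough}, which is where the translatability is actually used.
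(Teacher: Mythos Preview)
Your proposal is correct and follows essentially the same route as the paper: reduce to the side-preserving case via the rotation $r$ of Lemma \ref{rotation}, decompose a side-preserving stabilizer as a product in $V_{\alpha_-}\cdot V_{\alpha_+}$, and then invoke Lemma \ref{shift_enough} together with Fact \ref{cb_nbhd}. Your write-up is more explicit about the topological bookkeeping (why Dehn twists about $\alpha$ are already absorbed, why the side-preserving restriction to $\alpha$ can be isotoped to the identity) and squeezes out a slightly better exponent, but there is no substantive difference in strategy.
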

\begin{proof}
	Fix an identity neighborhood $V$, and using Lemma \ref{shift_enough} find $n \in \nat$ so that $V_{\alpha_-} \subseteq h^{-n} V h^n$ and $V_{\alpha_+} \subseteq h^n V h^{-n}$. Let $F = \set{r^{-1}, h^n, h^{-n}}$ where $r$ is the map defined in Lemma \ref{rotation}. We claim that $H \subseteq (FV)^5$, which gives the result by Fact $\ref{cb_nbhd}$.

	Pick $f \in H$. Up to homotopy, $f_{|\alpha}$ is either the identity or a reflection map; in the latter case replace $f$ with $rf$ so that $f_{|\alpha}$ is the identity. Then the action of $f$ on $\alpha_+$ and $\alpha_-$ do not interact, and so $f$ can be decomposed as $f = f_-f_+$, where $f_- \in V_{\alpha_-}$ and $f_+ \in V_{\alpha_+}$. It follows by our choice of $n$ that $f_- \in h^{-n} V h^n$ and $f_+ \in h^n V h^{-n}$, so $f \in h^{-n} V h^n h^n V h^{-n} \subseteq (FV)^4$. Since we may also have applied $r^{-1}$, this gives $f \in (FV)^5$.
\end{proof}
\begin{cor}\label{local_cb}
	Let $\Sigma$ be a translatable surface. Then $\MCG(\Sigma)$ is locally coarsely bounded.
\end{cor}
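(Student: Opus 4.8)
The plan is to read this off directly from Corollary \ref{stabilizer}: we have already shown that the stabilizer of a curve separating $e_+$ and $e_-$ is coarsely bounded, so it suffices to exhibit a neighborhood of the identity contained in such a stabilizer.

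Concretely, first fix a curve $\alpha$ separating the ends $e_+$ and $e_-$, exactly as in the preceding results, and let $K \subseteq \Sigma$ be a finite-type subsurface containing $\alpha$ — an annular neighborhood of $\alpha$ will do. By the description of the topology on $\MCG(\Sigma)$ recalled in Section \ref{prelim}, the set $V_K$ of mapping classes admitting a representative that restricts to the identity on $K$ is a neighborhood of the identity. Next, every $f \in V_K$ has a representative fixing $\alpha$ pointwise, so $f(\alpha)$ is homotopic to $\alpha$; that is, $V_K \subseteq H$, where $H = \set{f \in \MCG(\Sigma) \mid \text{$f(\alpha)$ is homotopic to $\alpha$}}$ is the curve stabilizer of Corollary \ref{stabilizer}. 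Since $H$ is coarsely bounded, and any subset of a set of finite diameter in a given left-invariant compatible pseudo-metric again has finite diameter, $V_K$ is coarsely bounded. Hence $\MCG(\Sigma)$ admits a coarsely bounded neighborhood of the identity, i.e.\ it is locally coarsely bounded.

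I do not expect any real obstacle here. The only points to check are that coarse boundedness is inherited by subsets — immediate from Definition \ref{cb_def} — and that $V_K$ is genuinely an identity neighborhood for finite-type $K$, which is recorded in Section \ref{prelim}. Indeed, all of the substance is already contained in Corollary \ref{stabilizer}; this corollary simply packages the observation that a small annular neighborhood of $\alpha$ forces a mapping class into that (coarsely bounded) stabilizer.
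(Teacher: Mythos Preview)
Your argument is correct and matches the paper's proof in spirit: the paper simply observes that the stabilizer $H$ of $\alpha$ is itself an identity neighborhood and is coarsely bounded by Corollary \ref{stabilizer}. Your version is a slightly more explicit unpacking of why $H$ is an identity neighborhood---you exhibit $V_K \subseteq H$ for an annular $K$---but the substance is identical.
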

\begin{proof}
	The stabilizer $H$ of a curve separating the ends $e_+$ and $e_-$ is an identity neighborhood, and by Corollary \ref{stabilizer} it is coarsely bounded.
\end{proof}

Note that unlike in the following sections, here we have not assumed tameness.

\section{The translatable curve graph}\label{graph_sec}

We are now ready to define a graph quasi-isometric to $\MCG(\Sigma)$ when $\Sigma$ is a translatable surface.

\begin{defn}
	Fix a collection $\mc{S}$ of subsurfaces of $\Sigma$. The \emph{translatable curve graph} $\scg(\Sigma, \mc{S})$ of $\Sigma$ with respect to the set of subsurfaces $\mc{S}$ is the graph whose vertices are curves separating $e_+$ and $e_-$, with an edge between two curves $\alpha$ and $\beta$ if they have disjoint representatives and $(\alpha, \beta)$ or $(\beta, \alpha)$ is homeomorphic to some $S \in \mc{S}$.
\end{defn}

We will eventually define a canonical and finite set $\mc{S}$ depending only on the surface $\Sigma$; once this has been defined, we will omit $\mc{S}$ and write simply $\scg(\Sigma)$.

Note that Corollary \ref{stabilizer} implies that vertex stabilizers of $\scg(\Sigma, \mc{S})$ are coarsely bounded. Also, with $\Sigma$ the Jacob's ladder surface and $S$ a surface with genus $1$ and two boundary components, the graph $\scg(\Sigma, \set{S})$ is precisely the motivating example from page \pageref{motivating}.

For an arbitrary translatable surface $\Sigma$, we might try taking a subsurface $S$ such that $\Sigma = S^{\natural \zah}$ as in Proposition \ref{segments}. But $\scg(\Sigma, \set{S})$ will not in general be connected. Instead, we will use the topology of $S$ to construct a collection of subsurfaces $\mc{S}$ such that $\scg(\Sigma, \mc{S})$ satisfies the conditions of Lemma \ref{ms}.

Consider the space of ends $E(S)$ of $S$; we may ask how these relate to the order structure on $E(\Sigma)$.

\begin{lemma}
	The maximal ends of the subsurface $S$ are either maximal ends of $\Sigma$ or immediate predecessors to maximal ends of $\Sigma$.
\end{lemma}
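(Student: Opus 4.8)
The plan is to relate the end space $E(S)$ of a single "segment" $S = (\alpha, h(\alpha))$ to the end space $E(\Sigma)$, and then transfer maximality through the decomposition $\Sigma = S^{\natural \zah}$ of Proposition~\ref{segments}. The key observation is that $E(\Sigma)$ is built from countably many copies of $E(S)$ together with the two extra ends $e_+$ and $e_-$: precisely, $E(\Sigma) \cong \bigsqcup_{i \in \zah} E(h^i(S)) \cup \{e_+, e_-\}$, where the copies $E(h^i(S))$ accumulate onto $e_+$ as $i \to +\infty$ and onto $e_-$ as $i \to -\infty$. In particular each copy of $E(S)$ sits inside $E(\Sigma)$ as a clopen subset (it is separated off by the two boundary curves of the corresponding copy of $S$), so for $x \in E(S)$ a clopen neighborhood basis of $x$ in $E(\Sigma)$ is the same as a clopen neighborhood basis of $x$ in $E(S)$. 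This means the relation $\preccurlyeq$ restricted to a single copy of $E(S)$ agrees with the relation $\preccurlyeq$ computed intrinsically in $E(S)$.

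First I would fix a maximal end $x$ of $S$ and suppose, for contradiction, that $x$ is not maximal in $E(\Sigma)$ and is not an immediate predecessor of a maximal end of $\Sigma$. Then by Fact~\ref{max_exist} there is a maximal end $z$ of $\Sigma$ with $x \prec z$, and by the "not an immediate predecessor" assumption we can interpose a third end: there is $w$ with $x \prec w \prec z$. Now the point is to push $z$ (or $w$) down into the copy of $S$ containing $x$. Since $z \succcurlyeq x$, every clopen neighborhood of $z$ contains a clopen copy of some neighborhood of $x$; but a neighborhood of $x$ may be taken inside the copy of $S$ that contains $x$, hence inside a copy of $S$. I would use the structure of $E(\Sigma)$ as a union of translates of $E(S)$ to find, inside the copy of $S$ containing $x$, an end $y$ that is $\sim$-equivalent to $z$ (or at least satisfies $x \prec y \preccurlyeq z$ with $y \in E(S)$). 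The mechanism: take a small clopen neighborhood $U$ of $x$ contained in one copy $E(h^i(S))$; then $z \succcurlyeq x$ gives a clopen copy $U'$ of a neighborhood of $x$ inside any neighborhood of $z$; but $U'$, being a clopen subset of $E(\Sigma)$ lying near $z$, can be arranged to lie in some single copy $E(h^j(S))$ (ends near $z$ that are not $e_\pm$ themselves lie in finitely-accumulating copies of $E(S)$); translating by $h^{i-j}$ carries this into the copy containing $x$. Iterating or composing these maps yields an end $y$ in the copy of $S$ with $x \prec y$, contradicting the maximality of $x$ in $E(S)$ (which, by the first paragraph, is the same as maximality in that copy of $E(\Sigma)$ restricted to $E(S)$).

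The main obstacle I anticipate is the bookkeeping in the previous step: making precise that an end $z$ with $z \succ x$ has, arbitrarily close to it, clopen sets that are homeomorphic copies of neighborhoods of $x$ and that sit inside a single translate $E(h^j(S))$ — rather than straddling several translates or clustering at $e_+$ or $e_-$. One has to rule out the possibility that $z$ "dominates $x$ only through the accumulation structure at $e_\pm$," but this cannot happen: $e_+$ and $e_-$ are fixed (up to the transposition of Lemma~\ref{rotation}) by all the relevant homeomorphisms, and a clopen neighborhood of $z \ne e_\pm$ can always be shrunk to miss $e_\pm$ and hence, being compact and covered by the clopen pieces $E(h^i(S))$, to lie in finitely many of them; a further shrink isolates one. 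Once that is in hand, the translation $h$ does the rest of the work, and the contradiction with $x$ being maximal in $E(S)$ closes the argument. I would also remark that the symmetric statement — that immediate predecessors of maximal ends of $S$ are immediate predecessors of maximal ends of $\Sigma$ — is not claimed here and is not needed; only the stated one-directional conclusion about maximal ends of $S$ is required for the construction of $\mc{S}$ that follows.
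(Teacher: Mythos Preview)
Your setup in the first paragraph is sound: $E(S)$ is clopen in $E(\Sigma)$ and the preorder restricts correctly. But the ``mechanism'' you describe in the second paragraph goes off course. Knowing $z \succcurlyeq x$ tells you that neighborhoods of $z$ contain homeomorphic copies of neighborhoods of $x$; translating such a copy $U'$ back into $E(S)$ does not produce an end strictly above $x$---it just reproduces a patch of $E(S)$ that looks like a patch near $x$. There is no reason the resulting end should satisfy $y \succ x$, and the vague ``iterating or composing these maps'' does not repair this.

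The fix is much simpler than what you describe, and you already name the right object: the intermediate end $w$ with $x \prec w \prec z$. The step you are missing is to first show that $e_+$ and $e_-$ are themselves maximal---indeed \emph{global} maxima---in $E(\Sigma)$; this follows because any clopen neighborhood of $e_+$ contains an entire translated copy of $E(S)$. Granting that, $w \prec z$ forces $w$ to be non-maximal, hence $w \ne e_\pm$, so $w$ already lies in some $E(h^j(S))$. Then $h^{-j}(w) \in E(S)$ with $h^{-j}(w) \sim w \succ x$, contradicting maximality of $x$ in $E(S)$. No clopen-set bookkeeping is needed, and the difficulties you anticipate in your third paragraph never arise.

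The paper's own proof is shorter still: having established that $e_\pm$ are global maxima, it avoids contradiction and verifies the definition of immediate predecessor directly. For $x$ maximal in $S$ and any $y$ with $x \preccurlyeq y \preccurlyeq e_+$, either $y \in \{e_+, e_-\}$ (so $y \sim e_+$) or $y$ lies in some copy of $S$, in which case translating gives $y \preccurlyeq x$ and hence $y \sim x$. A brief case split on whether $E(e_+)$ equals $\{e_+, e_-\}$ then yields the dichotomy in the statement.
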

\begin{proof}
	It follows from the decomposition given in Proposition \ref{segments} that $e_+$ and $e_-$ are maximal ends of $\Sigma$, and that $e_+ \sim e_-$. In fact, they are global maxima of the partial preorder: for any end $x$ of $\Sigma$, $x \preccurlyeq e_+$. If $E(e_+)$ contains some point $y$ distinct from $e_+$ and $e_-$, then $y$ is in the end space of some copy of $S$. In particular, since it is still true that for every end $x$ of $\Sigma$, $x \preccurlyeq e_+ \sim y$, $E(e_+)$ contains \emph{all} the maximal ends of $S$.\footnote{This implies, by Proposition 4.8 of \cite{mr}, that the end space of $\Sigma$ is self-similar and thus $\MCG(\Sigma)$ is coarsely bounded, and we will in fact see that $\scg(\Sigma, \mc{S})$ has finite diameter under these circumstances.}

	If, on the other hand, $E(e_+) = \set{e_+, e_-}$, then let $x$ be maximal in $S$. We know that $x \prec e_+$. Suppose we have an end $y$ such that $x \preccurlyeq y \preccurlyeq e_+$. If $y$ is an end of some copy of $S$, then by maximality $y \sim x$. But if $y$ is not an end of any copy of $S$, the only other possibility is that $y = e_\pm$, in which case $y \sim e_+$. Thus $x$ is an immediate predecessor to $e_+$.
\end{proof}

\begin{cor}
	If the end space of $\Sigma$ is tame, then every maximal end of $S$ has a stable neighborhood.
\end{cor}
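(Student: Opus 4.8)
The plan is to feed the preceding lemma straight into the definition of tameness (Definition \ref{tame_defn}); the only work beyond that is checking that a stable neighborhood inside $E(\Sigma)$ descends to one inside $E(S)$. First I would invoke the preceding lemma: every maximal end $x$ of $S$ is either a maximal end of $\Sigma$ or an immediate predecessor of a maximal end of $\Sigma$. Since $E(\Sigma)$ is assumed tame, Definition \ref{tame_defn} then hands us, in either case, a stable neighborhood $U$ of $x$ inside $E(\Sigma)$.

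Next I would note that $E(S)$ is clopen in $E(\Sigma)$: the subsurface $S$ is cut out by two separating curves, each of which divides $E(\Sigma)$ into two clopen pieces, and $E(S)$ is the intersection of the appropriate two such pieces. Consequently $U \cap E(S)$ is a clopen neighborhood of $x$ contained in $U$. I would then use the routine fact that a clopen sub-neighborhood of a stable neighborhood is again stable: if $U'$ is a clopen neighborhood of $x$ with $U' \subseteq U$, and $U'' \subseteq U'$ is any clopen neighborhood of $x$, stability of $U$ yields a clopen $W \subseteq U''$ with $W \cong U$, and transporting the inclusion $U' \subseteq U$ through this homeomorphism produces a clopen subset of $U''$ homeomorphic to $U'$. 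So after replacing $U$ by $U \cap E(S)$ we may assume $U \subseteq E(S)$.

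Finally, since $U \subseteq E(S)$, a subset of $U$ is a clopen neighborhood of $x$ relative to $E(S)$ exactly when it is one relative to $E(\Sigma)$, and the homeomorphism type of such a set does not depend on which surface's end space it is viewed inside; hence the condition defining ``$U$ is a stable neighborhood of $x$'' is literally the same statement read in $E(S)$ or in $E(\Sigma)$, and we already know it holds in $E(\Sigma)$. This gives the corollary. I expect no genuine obstacle here: the mathematical content lies entirely in the preceding lemma together with Mann and Rafi's notion of tameness, and the only points needing (minor) care are the clopenness of $E(S)$ in $E(\Sigma)$ and the stability of sub-neighborhoods.
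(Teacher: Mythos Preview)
Your proposal is correct and follows the same approach the paper intends: the paper states this as an immediate corollary with no proof, relying implicitly on the one-line deduction ``maximal ends of $S$ are maximal or immediate predecessors in $\Sigma$, hence tame gives stable neighborhoods.'' You reproduce this and then add the verification---glossed over in the paper but genuinely needed for the subsequent Lemmas \ref{finite_max} and \ref{just_stable}, where stable neighborhoods are taken inside $E(T)$---that the stable neighborhood can be intersected down into $E(S)$ and remains stable there; this extra paragraph is correct and a worthwhile clarification.
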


In general, a surface might have infinitely many equivalence classes of maximal ends. With tameness, however, the possibilities are much more limited.

\begin{lemma}\label{finite_max}
	If every maximal end of $T \subseteq \Sigma$ has a stable neighborhood, then the end space $E(T)$ has finitely many equivalence classes of maximal ends.
\end{lemma}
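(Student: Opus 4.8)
The plan is to argue by contradiction, using compactness of $E(T)$ to produce a limit of maximal ends coming from infinitely many distinct equivalence classes, and then using the stable neighborhood of a maximal end lying above that limit point to collapse all but finitely many of those classes together.

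The one nontrivial ingredient I would establish first is the following observation about stable neighborhoods of maximal ends: \emph{if $V$ is a stable neighborhood of a maximal end $m$, then $m \succcurlyeq z$ for every end $z \in V$; consequently every maximal end lying in $V$ is $\sim m$.} Indeed, given any clopen neighborhood $U$ of $m$ we may assume $U \subseteq V$ (replace $U$ by $U \cap V$); stability then yields a clopen set $W \subseteq U$ and a homeomorphism $\phi \colon V \to W$, and $\phi$ carries a clopen neighborhood of $z$ onto a clopen neighborhood of $\phi(z) \in W \subseteq U$, witnessing $m \succcurlyeq z$. For the consequence, if $z \in V$ is maximal then $m \succcurlyeq z$ rules out $m \succ z$, so $z \sim m$. (One also checks routinely that $\sim$ is compatible with $\preccurlyeq$, so any end $\sim$ a maximal end is itself maximal; this is used below.)

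Now suppose $E(T)$ has infinitely many $\sim$-classes of maximal ends, and choose maximal ends $m_1, m_2, \dots$ in pairwise distinct classes. Since $E(T)$ is compact and second countable, after passing to a subsequence we may assume $m_i \to y$ for some $y \in E(T)$. Every end of $E(T)$ lies below some maximal end (part of the order theory of \cite{mr}), so fix a maximal end $m^*$ with $y \preccurlyeq m^*$; by hypothesis $m^*$ has a stable neighborhood $V^*$. Applying the definition of $y \preccurlyeq m^*$ to the clopen neighborhood $V^*$ of $m^*$ gives a clopen neighborhood $V_0$ of $y$ and a homeomorphism from $V_0$ onto a clopen set $W \subseteq V^*$. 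Since $m_i \to y$, all but finitely many $m_i$ lie in $V_0$; let $m_i'$ denote the image of such an $m_i$. Then $m_i' \sim m_i$, so $m_i'$ is again a maximal end of $E(T)$, and $m_i' \in W \subseteq V^*$. The observation above, applied to the stable neighborhood $V^*$ of $m^*$, gives $m^* \succcurlyeq m_i'$; since $m_i'$ is maximal this forces $m^* \sim m_i'$, i.e.\ $E(m^*) = E(m_i)$. As this holds for cofinitely many $i$, it contradicts the assumption that the classes $E(m_i)$ are pairwise distinct, completing the proof.

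The main obstacle is essentially the bookkeeping in the middle step: one must check carefully that maximality and $\sim$-equivalence are genuinely transported by the homeomorphisms furnished by the definitions of $\preccurlyeq$ and of stability (and, as throughout this setting, that these homeomorphisms respect the set of ends accumulated by genus). A minor additional input is that every end of $E(T)$ lies below a maximal one, which is established in \cite{mr} and in any case follows from a Zorn's-lemma argument using compactness of $E(T)$.
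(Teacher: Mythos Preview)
Your argument is correct, but the paper takes a different (and somewhat more direct) route. Rather than arguing by contradiction through a limit point, the paper runs an open-cover compactness argument: for each end $y \in E(T)$ it chooses a maximal end $x$ with $y \preccurlyeq x$ and a clopen neighborhood $U_y$ of $y$ that embeds (as a clopen subset) into the stable neighborhood $V_x$; then compactness of $E(T)$ yields a finite subcover $U_1,\dots,U_n$ with associated maximal ends $x_1,\dots,x_n$, and stability of each $V_{x_i}$ forces every end in $U_i$---in particular every maximal end of $E(T)$---to satisfy $y \preccurlyeq x_i$. Both proofs rest on exactly the observation you isolate (everything in a stable neighborhood of a maximal end $m$ is $\preccurlyeq m$), but the paper's version gives an explicit finite list of representatives and avoids the bookkeeping of transporting $\sim$ and maximality through the homeomorphism $V_0 \to W$. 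Your approach, on the other hand, makes the role of accumulation more visible and would adapt well to settings where one only has sequential compactness.
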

\begin{proof}
	For each maximal end $x$ of $T$, let $V_x$ be a stable neighborhood of $x$. For every end $y$, pick a clopen neighborhood $U_y$ of $y$ such that $U_y$ is homeomorphic to a clopen subset of $V_x$ for some maximal end $x$.

	Since the neighborhoods $U_y$ cover the end space $E(T)$, and $E(T)$ is compact, there is a finite set $U_1, \dotsc, U_n$ covering $E(T)$, where each $U_i$ is homeomorphic to a clopen subset of $V_{x_i}$.

	Now for each $y \in E(T)$, $y \in U_i$ for some $i$. Let $V$ be an arbitrary clopen neighborhood of $x_i$; by stability, $V$ contains a homeomorphic copy of $V_{x_i}$, which in turn contains a homeomorphic copy of $U_i$ by construction. Thus $y \preccurlyeq x_i$. It follows that the set $\set{x_1, \dotsc, x_n}$ contains a representative of every equivalence class of maximal ends, so there are at most $n$ such equivalence classes.
\end{proof}

While there are finitely many equivalence classes of maximal ends, a priori the non-maximal ends might contribute meaningfully to the topology of a subsurface. However, this is not the case.

\begin{lemma}\label{just_stable}
	Given a subsurface $T \subseteq \Sigma$ with end space $E(T)$ and such that every maximal end of $T$ has a stable neighborhood, $E(T)$ can be written as the disjoint union $\bigsqcup_{i=1}^k V_i$ where each $V_i$ is a stable neighborhood of a maximal end of $T$.
\end{lemma}
\begin{proof}
	Let $\mc{M}(T)$ be the set of maximal ends of $E(T)$. We start by constructing a subset $M \subseteq \mc{M}(T)$ such that $M$ contains every $x \in \mc{M}(T)$ where $\mc{M}(T) \cap E(x)$ is finite, and one representative of $\mc{M}(T) \cap E(x)$ when this intersection is infinite. By Lemma \ref{finite_max} $M$ is finite and thus discrete, so we can pick disjoint stable neighborhoods $V_x \subseteq E(T)$ for each $x \in M$. Let $V = \bigsqcup_{x \in M} V_x$.

	For each $y \in E(T) \setminus V$, there is by maximality some $x \in M$ such that $y \preccurlyeq x$. If $y \prec x$, then $x$ is an accumulation point of $E(y)$; and if $y \sim x$, then since $y \not\in V$ the set $E(x) \cap E(T)$ must be infinite, and thus a Cantor set by Fact \ref{max_exist}, and so again $x$ is an accumulation point of $E(y) = E(x)$. Thus in either case we can apply Fact \ref{add_in} to find some clopen $U_y \ni y$ such that $U_y \subseteq E(T) \setminus V$ and $U_y \cup V_x$ is homeomorphic to $V_x$. The set $E(T) \setminus V$ is clopen and thus compact, and is covered by the neighborhoods $U_y$, so there is a finite set of these neighborhoods covering $E(T) \setminus V$; since they are all clopen we can ensure they are disjoint.

	For each $U_y$ in this finite set, pick $V_x$ so that $V_x \sqcup U_y$ is homeomorphic to $V_x$, and then replace $V_x$ with $V_x \sqcup U_y$. After finitely many steps, the entire end space $E(T)$ is contained in the disjoint union $\bigsqcup_{x \in M} V_x$. See Figure \ref{just_stable_fig} for an example.
\end{proof}

\begin{figure}
	\includegraphics[width=\textwidth]{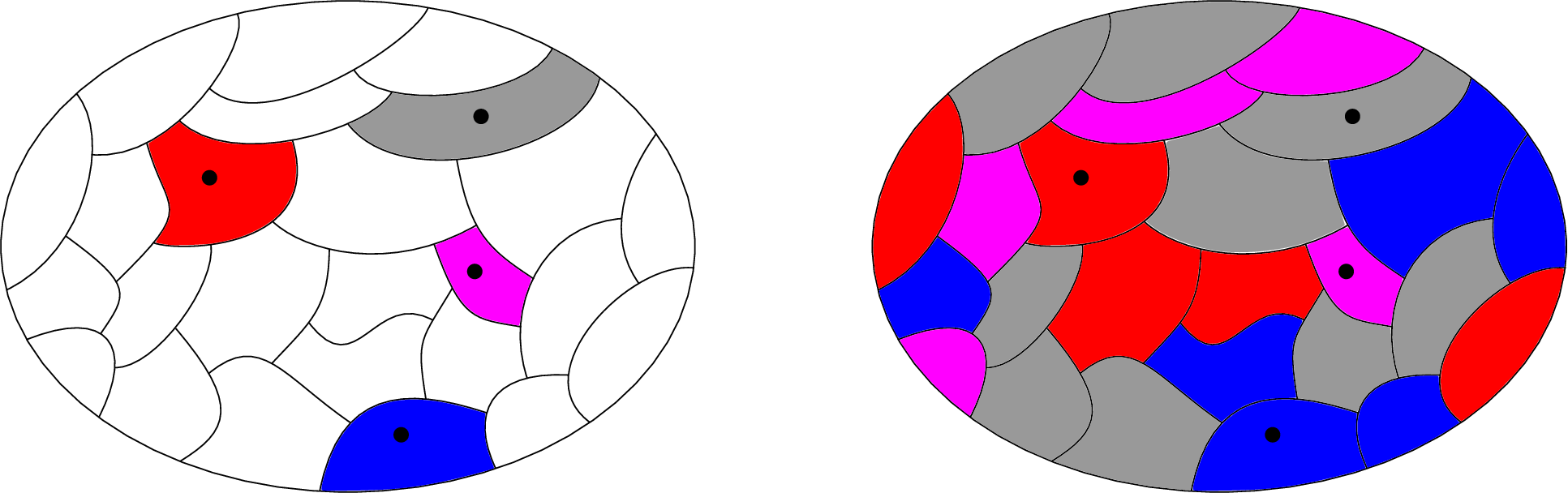}
	\caption{From the proof of Lemma \ref{just_stable}: on the left, the end space $E(T)$ is divided into finitely many disjoint regions by compactness. Each shaded region is a stable neighborhood of a maximal point, shown as a black dot. Each unshaded region satisfies Fact \ref{add_in} with respect to one of the shaded regions. On the right, the shaded regions have been expanded by repeated application of Fact \ref{add_in} so that they now cover the whole surface. Note that each shaded region is still a stable neighborhood of the maximal point shown as a dot; in fact, each large shaded region on the right is homeomorphic to the similarly-colored small region on the left.\label{just_stable_fig}}
\end{figure}

We are now ready to define the canonical collection $\mc{S}$ of subsurfaces that will be used in the definition of the graph $\scg(\Sigma, \mc{S})$. The following construction assumes $\Sigma$ is tame. Let $\set{f_1, \dotsc, f_n, c_1, \dotsc, c_m}$ be representatives of the equivalence classes of maximal ends of $S$, with each $E(f_i)$ intersecting the end space of $S$ finitely many times, and each $E(c_i)$ intersecting the end space of $S$ in a Cantor set. Pick disjoint stable neighborhoods $V_{f_i}$ and $V_{c_i}$ of each representative.

For each $1 \le i \le n$, let $T_i$ be a surface with two boundary components and end space homeomorphic to $V_{f_i} \sqcup \bigsqcup_{j=1}^m V_{c_j}$. If $f_i$ or any of the $c_j$ is accumulated by genus, then by construction $T_i$ will have infinite genus; if none of them are, we further specify that $T_i$ have genus zero. If none of the $f_i$ or $c_i$ are accumulated by genus, but the surface $\Sigma$ has positive genus---in other words, if $S$ has finite positive genus---then let $T_{n+1}$ be the surface with two boundary components, genus $1$, and end space homeomorphic to $\bigsqcup_{j=1}^m V_{c_j}$. Let $\mc{S} = \set{T_1, \dotsc, T_n, (T_{n+1})}$, including $T_{n+1}$ if it has been defined.

We need to handle an edge case: if $n=0$ and $S$ has $0$ or infinite genus, the above construction will give $\mc{S} = \emptyset$, which is not desirable; so in this case we let $\mc{S} = \set{S}$, noting that $S$ is in fact a surface with two boundary components and end space homeomorphic to $\bigsqcup_{j=1}^m V_{c_j}$ by Lemma \ref{just_stable}. It can be seen without too much trouble that in this case $\scg(\Sigma, \mc{S})$ has diameter $2$; this is consistent with the fact that under these conditions the end space of $\Sigma$ is either self-similar (if the maximal ends are all equivalent to $e_+$ and $e_-$) or \emph{telescoping} with respect to $e_+$ and $e_-$ (if they are predecessors), and so by Proposition 3.5 of \cite{mr} the mapping class group $\MCG(\Sigma)$ is coarsely bounded.

From here on we assume $\mc{S}$ is the set of subsurfaces just constructed, and write $\scg(\Sigma)$ to mean $\scg(\Sigma, \mc{S})$.

Before proving connectedness we introduce the following construction, which allows us to produce subsurfaces of $\Sigma$ with nearly arbitrary genus and end space.

\begin{lemma}\label{construct}
	Let $\Sigma$ be a translatable surface with a curve $\alpha$ separating $e_+$ and $e_-$. Then for any clopen subset $V \subseteq E(\alpha_+)$, there is a curve $\beta$ also separating $e_+$ and $e_-$ and such that $E((\alpha, \beta)) = V$. Furthermore, if no end of $V$ is accumulated by genus, but $e_+$ is, there is for every $n \in \nat$ a choice of $\beta$ such that $(\alpha, \beta)$ has genus $n$.
\end{lemma}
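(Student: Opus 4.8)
The plan is to obtain $\beta$ by an application of the change-of-coordinates principle — i.e.\ the classification of surfaces — to the one-holed subsurface $\alpha_+$, treating the two assertions uniformly by keeping track of the genus of the piece cut off by $\beta$.

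First I would record the constraints. Since $(\alpha,\beta)$ is only defined when $\beta$ separates $\alpha$ from $e_+$, the end $e_+$ lies in $\beta_+$, so necessarily $e_+ \notin V$; this is implicit in the statement and holds in all applications, so I assume it. Then $\alpha_+$ is a connected surface with the single boundary curve $\alpha$, and $V$ is a clopen — hence compact — subset of its end space with $V \ne E(\alpha_+)$. If $V = \emptyset$ one takes $\beta$ parallel to $\alpha$, or, for the genus statement, cobounding with $\alpha$ a compact genus-$n$ surface; so assume $V \ne \emptyset$.

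The construction then runs as follows. Fix a genus $g_\ast \in \nat \cup \{\infty\}$ for the would-be subsurface $(\alpha,\beta)$: necessarily $g_\ast = \infty$ when $V \cap E_G \ne \emptyset$; take $g_\ast = 0$ for the first assertion and $g_\ast = n$ for the second, where the hypothesis that $e_+$ — and hence $\alpha_+$ — is accumulated by genus forces $\alpha_+$ to have infinite genus, leaving room for the remaining infinitely many handles on the complementary side. Build an abstract surface $X_0$ with exactly two boundary circles, end space $V$, ends accumulated by genus $V \cap E_G$, and genus $g_\ast$; build an abstract surface $Y_0$ with exactly one boundary circle, end space $E(\alpha_+) \setminus V$ (which contains $e_+$), ends accumulated by genus $(E(\alpha_+)\setminus V) \cap E_G$, and genus chosen so that the genera of $X_0$ and $Y_0$ add up to the genus of $\alpha_+$. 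Glue one boundary circle of $X_0$ to the boundary of $Y_0$: the result has one boundary circle, end space $E(\alpha_+)$, the correct set of ends accumulated by genus, and genus equal to that of $\alpha_+$, hence is homeomorphic to $\alpha_+$ by the classification, via a homeomorphism respecting the boundary. Transporting the gluing circle through this homeomorphism yields a simple closed curve $\beta \subseteq \alpha_+$ with $(\alpha,\beta) \cong X_0$: thus $\beta$ separates $\alpha$ from $e_+$ (hence $e_+$ from $e_-$), one has $E((\alpha,\beta)) = V$, and $\beta$ cuts off genus $g_\ast$. Taking $g_\ast = 0$ proves the first claim, and $g_\ast = n$ the second.

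I expect the only real friction to be in two bookkeeping points: (i) checking that the prescribed genus and $E_G$ data on the two sides are consistent so that $X_0$ and $Y_0$ actually exist — which reduces to the elementary fact that a surface has infinite genus exactly when some end is accumulated by genus — and (ii) making sure we really obtain $(\alpha,\beta)$ with exactly two boundary components and $\beta_+$ with exactly one, rather than merely a multicurve separating $V$ from its complement. Both are handled cleanly by passing through abstract models and invoking the classification, rather than by cutting $\Sigma$ in place.
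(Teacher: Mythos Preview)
Your argument is correct, and it takes a genuinely different route from the paper's. The paper works entirely inside $\Sigma$: it uses compactness to write $V$ as a finite union of clopen sets each bounded by a single curve, tubes these together into one curve $\eta$ bounding $V$, then bands $\eta$ to $\alpha$ along an arc to produce $\beta$; for the genus statement it appeals to the same banding trick a second time, attaching one handle at a time from $\beta_+$. Your approach instead builds abstract models $X_0$, $Y_0$ with the prescribed invariants, glues, and invokes the Ker\'ekj\'art\'o--Richards classification (for surfaces with compact boundary) to identify the result with $\alpha_+$, then transports the gluing circle.

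What each buys: the paper's construction is more hands-on and picture-friendly, and avoids citing the classification for bordered surfaces, but it is slightly ad hoc on the genus side --- one has to check that the initial $(\alpha,\beta)$ can be made planar before adding handles, a point the paper leaves implicit. Your version is cleaner and treats both assertions uniformly via the single parameter $g_\ast$; the only extra input is the bordered classification, which is a mild and standard extension of what the paper already quotes. Your identification of the two bookkeeping points (existence of $X_0$, $Y_0$ with consistent $E_G$/genus data; $\beta$ a single curve rather than a multicurve) is exactly right, and both are indeed resolved by working with abstract models.
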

\begin{proof}
	Recall that for any separating curve $\gamma$ on $\Sigma$, the end sets of the two components of $\Sigma \setminus \gamma$ are both clopen subsets of $E(\Sigma)$, and that these clopen subsets form a basis for the topology of $E(\Sigma)$.

	By picking clopen neighborhoods of this kind for each end in $V$ and applying compactness, we can describe $V$ as a disjoint union of clopen sets, each of which is bounded by a curve. These can then be combined so that $V$ is bounded by a single curve, as in Figure~\ref{many_to_one}. Call this curve $\eta$. Draw an arc $\lambda$ connecting the curves $\alpha$ and $\eta$, and let $\beta$ be the curve following along $\alpha$, $\lambda$, and $\eta$ as in Figure~\ref{following_curve}. By construction, $\beta$ separates $e_+$ and $e_-$, and $E((\alpha, \beta)) = V$.

	If no end of $V$ is accumulated by genus but $e_+$ is, then $(\alpha, \beta)$ must have finite genus and $\beta_+$ must have infinite genus. By picking a curve $\zeta$ separating a single handle from the rest of $\Sigma$ and then applying the construction of Figure \ref{following_curve} with $\beta$ replacing $\alpha$ and $\zeta$ replacing $\eta$, we get a new curve $\beta'$ such that $(\alpha, \beta)$ and $(\alpha, \beta')$ have the same end space but genus differing by $1$. Doing this finitely many times lets us achieve arbitrary genus for $(\alpha, \beta)$ in this case.
\end{proof}

\begin{figure}
	\includegraphics[width=\textwidth]{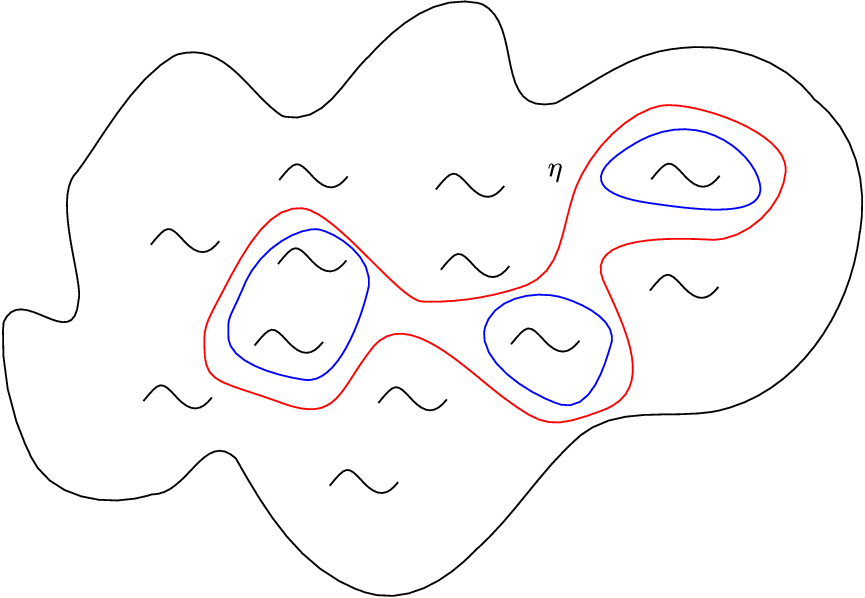}
	\caption{If $V$ is the union of finitely many sets bounded by blue curves, we can find a single red curve $\eta$ bounding $V$. Each squiggle represents a possibly complicated clopen set of ends.\label{many_to_one}}
\end{figure}
\begin{figure}
	\includegraphics[width=\textwidth]{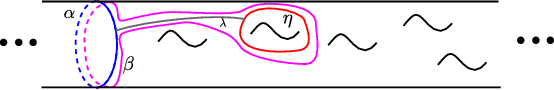}
	\caption{Given $\alpha$ in blue, $\eta$ in red, and $\lambda$ in gray, we draw $\beta$ in magenta. Again, each squiggle represents a possibly complicated clopen set of ends.\label{following_curve}}
\end{figure}

Now that we can construct appropriate subsets, we will be able to build paths between curves in $\scg(\Sigma)$.

\begin{lemma}\label{connected}
	If $\Sigma$ is a translatable surface with tame end space then $\scg(\Sigma)$ is connected.
\end{lemma}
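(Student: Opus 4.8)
The plan is to show that any two vertices $\alpha,\beta$ of $\scg(\Sigma)$ lie in a common connected component, using Lemma \ref{construct} as the principal construction tool; we may assume we are not in the edge case $\mc S=\set S$ handled above, where $\scg(\Sigma)$ already has diameter $2$. Fix the decomposition $\Sigma=S^{\natural\zah}$ of Proposition \ref{segments}. Applying Lemma \ref{just_stable} to the tame surface $S$, each ``tile'' $E(S)$ contains a clopen copy of the end space of every $T_i$, and of $T_{n+1}$; hence so does every neighbourhood of $e_+$ in $E(\Sigma)$, since such a neighbourhood contains all but finitely many tiles. Finally, since $\alpha\cup\beta$ lies in a compact subsurface there is a curve $\gamma$ separating $e_+$ from $e_-$, disjoint from $\alpha\cup\beta$, with $e_-\in\gamma_-$, so that $\alpha,\beta\subseteq\gamma_+$ and in particular $\gamma\subseteq\alpha_-\cap\beta_-$. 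It therefore suffices to connect two disjoint separating curves $\gamma\subseteq\delta_-$ by a path.

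Given such $\gamma\subseteq\delta_-$, I first use Lemma \ref{construct} repeatedly to build a path $\delta=\eta_0,\eta_1,\dotsc,\eta_p=\eta$ in $\scg(\Sigma)$ with each $\eta_{q+1}\subseteq(\eta_q)_+$ and $(\delta,\eta)\cong T_1\natural\dotsb\natural T_n\natural(T_{n+1})^{\natural K}$ for a large integer $K$ chosen below (with $K=0$ if $T_{n+1}$ is undefined): at each step $E((\eta_q)_+)$ is a neighbourhood of $e_+$ and so contains a clopen copy of the end space of the next prescribed piece, while the genus of each piece is arranged using the second clause of Lemma \ref{construct} --- whose hypothesis, that $e_+$ is accumulated by genus, holds precisely when $\Sigma$ has positive genus --- together with the classification of Richards \cite{richards}. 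Now $(\gamma,\eta)=(\gamma,\delta)\natural T_1\natural\dotsb\natural T_n\natural(T_{n+1})^{\natural K}$, and the heart of the proof is the following absorption claim: writing $P=(\gamma,\delta)$ and taking $K$ at least the genus of $P$ (which is finite whenever $T_{n+1}$ is defined), the surface $(\gamma,\eta)$ is homeomorphic to $T_{j_1}\natural\dotsb\natural T_{j_l}$ for some finite sequence $j_1,\dotsc,j_l$. Granting this, the homeomorphism carries the curves along which the right-hand connected sum is formed to a chain $\gamma=\mu_0,\mu_1,\dotsc,\mu_l=\eta$ of separating curves realising the decomposition, so $\gamma,\mu_1,\dotsc,\mu_{l-1},\eta$ is a path in $\scg(\Sigma)$; concatenating with the reverse of $\eta_0,\dotsc,\eta_p$ connects $\gamma$ to $\delta$, and applying this with $\delta=\alpha$ and $\delta=\beta$ connects $\alpha$ to $\beta$.

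To prove the absorption claim, observe that $E(P)$ is clopen in $E(\Sigma)$ and misses $e_\pm$, so by compactness it is covered by finitely many tiles, whence every end of $P$ is $\preccurlyeq$ some $f_i$ or some $c_j$ (these being the maximal ends of $S$, which dominate every end of $S$, and the order on $E(S)$ passing to $E(\Sigma)$). Cover $E(P)$ by small clopen neighbourhoods each homeomorphic to a clopen subset of some $V_{f_i}$ or $V_{c_j}$, and pass to a finite, pairwise disjoint subcover. In the end space of $P\natural T_1\natural\dotsb\natural T_n\natural(T_{n+1})^{\natural K}$: each cover piece centred at an end $\prec f_i$ or $\prec c_j$ is absorbed into the corresponding $V_{f_i}$ or $V_{c_j}$ carried by one of the attached pieces, by Fact \ref{add_in} (valid since $y\prec x$ forces $x$ to accumulate $E(y)$); each cover piece centred at an end $\sim f_i$ is homeomorphic to $V_{f_i}$ by Fact \ref{stable_homeo} and merely increases the number of $V_{f_i}$-summands; and all copies of $V_{c_j}$, together with any clopen subsets of $V_{c_j}$ contributed by $E(P)$, merge into a single $V_{c_j}$, because $c_j$ is a maximal end with $E(c_j)$ a Cantor set (Fact \ref{max_exist}) and hence accumulates its own equivalence class, so Facts \ref{add_in} and \ref{stable_homeo} apply again. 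What is left is $\bigsqcup_i a_iV_{f_i}\sqcup\bigsqcup_j V_{c_j}$ with every $a_i\ge1$, which is the end space of $T_1^{\natural a_1}\natural\dotsb\natural T_n^{\natural a_n}\natural(T_{n+1})^{\natural(g+K)}$, where $g$ is the genus of $P$; the genera match by the choice of $K$, and when $T_{n+1}$ is undefined the total genus is $0$ or infinite on both sides, so in all cases Richards' classification yields the homeomorphism (the $E_G$-subset being preserved throughout).

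The main obstacle is the absorption claim, and within it the genus bookkeeping. Ends of $P$ equivalent to a finite-type maximal end $f_i$ cannot be swallowed --- in $\Sigma$ such an end need not accumulate its own equivalence class --- so they must be paid for by using $T_i$ several times; and any positive genus carried by $P$ must be soaked up by extra copies of $T_{n+1}$, which is exactly why $\delta$ is first pushed past a large number of copies of $T_{n+1}$ before the region between $\gamma$ and $\eta$ is retiled. Verifying that every homeomorphism invoked above respects the subset $E_G$ of ends accumulated by genus, so that the retiling is a genuine homeomorphism of surfaces rather than merely of end spaces, is the step demanding the most care.
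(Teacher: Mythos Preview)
Your argument is correct and shares the paper's overall skeleton: reduce to showing that the region between two disjoint separating curves decomposes as a $\natural$-sum of pieces from $\mc S$, then realise that decomposition by curves via Lemma~\ref{construct} and the classification of surfaces. The tactical details, however, differ. The paper chooses the auxiliary curve on the $e_+$ side so that $(\alpha,\gamma)$ already contains a representative of every $E(f_i)$ and $E(c_i)$, applies Lemma~\ref{just_stable} to write $E((\alpha,\gamma))$ as a finite union of stable neighbourhoods, and then \emph{splits} each Cantor piece $V_{c_i}$ into $\ell+g$ homeomorphic copies which it redistributes among the $V_{f_j}$'s and the genus; the intermediate curves are then built in one pass with Lemma~\ref{construct}. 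You instead take $\gamma$ on the $e_-$ side and first \emph{pad} by explicitly constructing a path $\delta=\eta_0,\dotsc,\eta_p=\eta$ realising $T_1\natural\dotsb\natural T_n\natural T_{n+1}^{\natural K}$, and only then argue an \emph{absorption} claim for the combined region $(\gamma,\eta)$ using Facts~\ref{stable_homeo} and~\ref{add_in} directly rather than via Lemma~\ref{just_stable}. Your route costs an extra step (the padding) but makes the decomposition mechanical, since every stable neighbourhood needed as an absorption target is already present; the paper's route is shorter but requires the clever redistribution of the Cantor pieces. One small remark: the bound $K\ge g$ is not actually what is needed---any $K\ge 1$ suffices when $T_{n+1}$ is defined (the final exponent $g+K$ just records the total genus), and the care you flag about making the clopen cover of $E(P)$ disjoint while retaining the hypotheses of Fact~\ref{add_in} is exactly the same care the paper takes in the proof of Lemma~\ref{just_stable}.
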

\begin{proof}
	Given $\alpha, \beta \in \scg(\Sigma)$, let $\gamma$ be a curve in $\alpha_+ \cap \beta_+$ such that the subsurfaces $(\alpha, \gamma)$ and $(\beta, \gamma)$ both have end spaces containing representatives of each $E(f_i)$ and $E(c_i)$, using the representatives $\set{f_1, \dotsc, f_n, c_1, \dotsc, c_m}$ defined above; such a curve $\gamma$ can always be found by looking in a small enough neighborhood of $e_+$. We will show that $\alpha$ is connected to $\gamma$; by symmetry, this will imply that $\beta$ is connected to $\gamma$ and so $\alpha$ is connected to $\beta$.

	Let $g$ equal the genus of the subsurface $(\alpha, \gamma)$ if it has finite genus, and $0$ if it has infinite genus. By Lemma \ref{just_stable} the end space of $(\alpha, \gamma)$ can be written as $\bigsqcup_{i=1}^k U_i$ where each $U_i$ is a stable neighborhood of a maximal end. Some of the $U_i$ are stable neighborhoods of an end equivalent to some $f_j$. Let us write these as $\set{X_j}_{j=1}^\ell$, where $\ell$ is a positive integer less than or equal to $k$. The rest of the $U_i$ are stable neighborhoods of some $c_j$. Note in particular that, since $\gamma$ was chosen so that $(\alpha, \gamma)$ contains a representative of each $E(c_i)$, there is at least one $U_j$ that is a stable neighborhood of each $c_j$. Since the disjoint union of two stable neighborhoods of an end equivalent to $c_j$ is itself a stable neighborhood of an end equivalent to $c_j$, we can combine the remaining $U_i$ to get $\set{V_i}_{i=1}^m$, where for each $i$, $V_i$ is a stable neighborhood of an end equivalent to $c_i$. Then $\bigsqcup_{i=1}^k U_i = \big( \bigsqcup_{i=1}^m V_i \big) \sqcup \big( \bigsqcup_{j=1}^\ell X_j \big)$.

	For each $1 \le i \le m$, $E(c_i) \cap V_i$ is a Cantor set, so we can identify $\ell + g$ elements of $E(c_i)$ inside $V_i$ and split $V_i$ into $\bigsqcup_{j=1}^{\ell + g} V_{i,j}$ where each $V_{i,j}$ is a stable neighborhood of an end in $E(c_i) \cap V_i$ and is thus homeomorphic to $V_i$. Then for each $1 \le j \le \ell$, let $W_j = X_j \sqcup \bigsqcup_{i=1}^m V_{i,j}$, and for $\ell + 1 \le j \le \ell + g$ let $W_j = \bigsqcup_{i=1}^m V_{i,j}$. By construction, the end space of $(\alpha, \gamma)$ is $\bigsqcup_{j=1}^{\ell+g} W_j$.

	Finally, define $\set{\alpha_0 = \alpha, \alpha_1, \dotsc, \alpha_{\ell + g} = \gamma}$ such that the end space of $(\alpha_{j-1}, \alpha_j)$ is $W_j$ and such that the genus of $(\alpha_{j-1}, \alpha_j)$ is $0$ or infinite for $j \le \ell$ and $1$ for $j \ge \ell$, which is possible by Lemma \ref{construct}. Then each $(\alpha_{j-1}, \alpha_j)$ is homeomorphic to one of the subsurfaces $T_i \in \mc{S}$, and so $\alpha_{j-1}$ and $\alpha_j$ are adjacent in $\scg(\Sigma)$.
\end{proof}

This leaves us quite close to fulfilling all the hypotheses of Lemma \ref{ms}. We need one more ingredient:

\begin{lemma}\label{dist_one}
	Let $\Sigma$ be a translatable surface with tame end space. Then for any vertex $\alpha$ of $\scg(\Sigma)$, the set $H = \set{f \in \MCG(\Sigma) \mid d(f(\alpha), \alpha) \le 1}$ is coarsely bounded.
\end{lemma}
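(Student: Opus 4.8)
The plan is to reduce this to Corollary~\ref{stabilizer}, which already tells us that the genuine stabilizer $H_0 = \set{f \mid f(\alpha) \simeq \alpha}$ is coarsely bounded, plus a finite correction. The set $H = \set{f \mid d(f(\alpha),\alpha) \le 1}$ is the preimage under the orbit map $f \mapsto f(\alpha)$ of the closed ball of radius $1$ around $\alpha$ in $\scg(\Sigma)$. So it suffices to show two things: first, that the ball of radius $1$ around $\alpha$ is ``finite up to the stabilizer'', i.e.\ there are finitely many $\MCG(\Sigma)$-orbit representatives of edges incident to $\alpha$; and second, that for each such representative the set of mapping classes realizing it differs from $H_0$ by a single left coset. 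If $H = \bigcup_{i=1}^{k} g_i H_0$ for finitely many $g_i$, then by Fact~\ref{cb_nbhd} (a finite union of coarsely bounded sets is coarsely bounded, since $g_i H_0 \subseteq (FV)^j$ whenever $H_0$ does, after enlarging $F$ by the $g_i$) we are done.

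First I would set up the combinatorics of edges at $\alpha$. A neighbor of $\alpha$ in $\scg(\Sigma)$ is a curve $\beta$ disjoint from $\alpha$, separating $e_+$ and $e_-$, with $(\alpha,\beta)$ or $(\beta,\alpha)$ homeomorphic to one of the finitely many $T_i \in \mc{S}$. Using Lemma~\ref{rotation} to swap the two sides if necessary, it is enough to understand neighbors $\beta \subseteq \alpha_+$ with $(\alpha,\beta) \cong T_i$. The key claim is: \emph{any two such neighbors $\beta, \beta'$ with $(\alpha,\beta) \cong (\alpha,\beta') \cong T_i$ lie in the same $H_0$-orbit}, i.e.\ there is $f$ fixing $\alpha$ (up to homotopy) with $f(\beta) = \beta'$. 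This is proved exactly as in Lemma~\ref{transitive}: write $\Sigma$ via Proposition~\ref{segments} using a curve deep in $\alpha_+$, observe that $\beta_+ \cong \beta'_+$ (both are $\alpha_+$ with a copy of $T_i$ removed, and $\alpha_+ \cong T_i \natural (\text{tail})$ which is self-similar enough in the tail), and glue a homeomorphism of $(\alpha,\beta) \to (\alpha,\beta')$ fixing $\alpha$ to a homeomorphism $\beta_+ \to \beta'_+$. Combined with the finitely many choices of $T_i$ and the two sides, this gives finitely many $H_0$-orbits of neighbors.

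Then the argument concludes as follows. Pick orbit representatives $\beta_1, \dots, \beta_k$ for the neighbors of $\alpha$ (including both ``$(\alpha,\beta_j)\cong T_i$'' and ``$(\beta_j,\alpha)\cong T_i$'' cases). For each $j$ choose $g_j \in \MCG(\Sigma)$ with $g_j(\alpha) = \beta_j$. If $d(f(\alpha),\alpha) \le 1$, then either $f(\alpha) \simeq \alpha$, so $f \in H_0$; or $f(\alpha) = \beta$ for some neighbor $\beta$, and $\beta$ is in the $H_0$-orbit of some $\beta_j$, say $\beta = h(\beta_j) = h g_j(\alpha)$ with $h \in H_0$; then $g_j^{-1} h^{-1} f$ fixes $\alpha$, so $f \in h g_j H_0 \subseteq H_0 g_j H_0$. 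Hence $H = H_0 \cup \bigcup_{j=1}^k H_0 g_j H_0$. Now fix an identity neighborhood $V$; by Corollary~\ref{stabilizer} there is a finite $F_0$ and $p \in \nat$ with $H_0 \subseteq (F_0 V)^p$, and setting $F = F_0 \cup \set{g_1, \dots, g_k}$ we get $H_0 g_j H_0 \subseteq (F_0 V)^p F (F_0 V)^p \subseteq (FV)^{2p+1}$ (absorbing the bare $g_j \in F$ as $g_j = g_j e \in FV$ after noting $e \in V$). Thus $H \subseteq (FV)^{2p+1}$, and Fact~\ref{cb_nbhd} gives that $H$ is coarsely bounded.

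The main obstacle I expect is the orbit-finiteness claim for neighbors of $\alpha$: one must check carefully that ``$(\alpha,\beta) \cong T_i$'' pins down $\beta$ up to the $\alpha$-stabilizer, which requires knowing that the complementary piece $\beta_+$ is determined up to homeomorphism by $\alpha_+$ and $T_i$ — this uses that $\alpha_+$, as the half of a translatable surface, has a self-similar end space dominated by $e_+$, so that removing a $T_i$ from it and gluing it back in a different position yields a homeomorphic surface, essentially the $X \natural Y$ versus $Y \natural Z$ trick of Lemma~\ref{transitive}. Everything else is bookkeeping with Fact~\ref{cb_nbhd} and the finiteness of $\mc{S}$.
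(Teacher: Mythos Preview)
Your proposal is correct and follows essentially the same strategy as the paper: both arguments reduce to Corollary~\ref{stabilizer} by showing that $H$ decomposes as a finite union $H_0 \cup \bigcup_j H_0 g_j H_0$, one double coset for each $T_i \in \mc{S}$ (and each side of $\alpha$), the key step in both being that two neighbors $\beta,\beta'$ with $(\alpha,\beta)\cong(\alpha,\beta')\cong T_i$ lie in the same $H_0$-orbit since $\beta_+\cong\beta'_+$ by Lemma~\ref{transitive}. The paper is slightly more explicit---it builds specific shift maps $h_i$ with $(\alpha,h_i(\alpha))\cong T_i$ and takes the correcting element $g$ in $V_{\alpha_-}$ rather than all of $H_0$, obtaining the concrete bound $(FV)^8$---but the underlying idea is identical to yours.
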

\begin{proof}
	We will start by defining some helpful mapping classes. Using Proposition \ref{segments}, let $\Sigma = S^{\natural \zah} = \bigntrl_{j \in \zah} S_j$ where all the $S_j$ are homeomorphic and $S_0 = (\alpha, h^N(\alpha))$ for some $N$. As above let $\set{f_1, \dotsc, f_n, c_1, \dotsc, c_m}$ be representatives of the equivalence classes of maximal ends of $S_0$. We may choose these so that each $f_i$ and $c_i$ is actually an end of $S_0$ itself. Let $\set{V_{f_1}, \dotsc, V_{f_n}, V_{c_1}, \dotsc, V_{c_m}}$ be a set of disjoint stable neighborhoods of these ends, also contained in the end space of $S_0$. For each $x$ equal to some $f_i$ or $c_i$ and each $j \in \zah$, let $V_{x,j}$ be a homeomorphic copy of $V_x$ in the end space of the subsurface $S_j$; for instance, we can let let $V_{x, j} = h^{jN} V_x$.

	Note that the sequence of sets $V_{{f_i},j}$ converges to $e_\pm$ as $j$ goes to $\infty_\pm$, and likewise for $V_{{c_i},j}$. That means that for each maximal end $x$ equal to some $f_i$ or $c_i$ there is a homeomorphism of the end space of $\Sigma$ taking each $V_{x,j}$ to $V_{x, j+1}$ and fixing the rest of the end space pointwise. This homeomorphism of the end space extends to a homeomorphism $h_x$ of $\Sigma$; if the end $x$ is not accumulated by genus we may also construct $h_x$ so that $(\alpha, h_x(\alpha))$ has genus $0$. For each $1 \le i \le n$ let $h_i = h_{f_i} \circ \prod_{k=1}^m h_{c_k}$. If $S_0$ has finite positive genus, let $h_\text{genus}$ be a map that moves a single handle from each $S_j$ to $S_{j+1}$ and fixes the end space of $\Sigma$, and then let $h_{n+1} = h_\text{genus} \circ \prod_{k=1}^m h_{c_k}$.

	Observe that, for $1 \le i \le n$, the end space of the surface $(\alpha, h_i(\alpha))$ is homeomorphic to $V_{f_i} \sqcup \bigsqcup_{i=1}^m V_{c_m}$, and has either $0$ or infinite genus depending on whether $f_i$ or any of the $c_i$ is accumulated by genus. Thus $(\alpha, h_i(\alpha))$ is homeomorphic to the subsurface $T_i \in \mc{S}$, where $\mc{S}$ is the canonical set of subsurfaces used to define $\scg(\Sigma) = \scg(\Sigma, \mc{S})$. The same is true of $(h^{-1}_i(\alpha), \alpha)$. If $S_0$ has finite positive genus, then $(\alpha, h_{n+1}(\alpha)$ has genus $1$ and end space homeomorphic to $\bigsqcup_{i=1}^m V_{c_m}$, so $(\alpha, h_{n+1}(\alpha))$ is homeomorphic to $T_{n+1} \in \mc{S}$. Again, the same is true of $(h_{n+1}^{-1}(\alpha),\alpha)$.

	We are now ready to prove that $H$ is coarsely bounded using Fact \ref{cb_nbhd}. Let $V$ be an identity neighborhood in $\MCG(\Sigma)$, and find $n \in \nat$ as in Lemma \ref{shift_enough} so that $V_{\alpha_-} \subseteq h^{-n}Vh^n$ and $V_{\alpha_+} \subseteq h^nVh^{-n}$. Let $F = \set{r^{-1}, h^{\pm n}, h_1^{\pm 1}, \dotsc, h_n^{\pm 1}, (h_{n+1}^{\pm 1})}$, where $r$ is the mapping class defined in Lemma \ref{rotation} and including the maps $h_{n+1}^{\pm 1}$ if they are defined. We claim that $H \subseteq (FV)^8$.

	Let $f \in H$. If $d(\alpha, f(\alpha)) = 0$, then $f \in (FV)^5$ as shown in Corollary \ref{stabilizer}. If not, then $d(\alpha, f(\alpha)) = 1$ and so $\alpha$ and $f(\alpha)$ have disjoint representatives. Assume $f(\alpha) \subseteq \alpha_+$---if not, we will merely have to reverse some signs. By the definition of adjacency in $\scg(\Sigma) = \scg(\Sigma, \mc{S})$, we know that $(\alpha, f(\alpha))$ is homeomorphic to some $T_i \in \mc{S}$. Since $(\alpha, f(\alpha))$ is homeomorphic to $(\alpha, h_i(\alpha))$ by construction, and $f(\alpha)_+$ is homeomorphic to $h_i(\alpha)_+$ by Lemma \ref{transitive}, there is a map $g$ taking $h_i(\alpha)$ to $f(\alpha)$ and restricting to the identity on $\alpha_-$---that is, $g(h_i(\alpha)) = f(\alpha)$ and $g \in V_{\alpha_-} \subseteq (FV)^2$.

	Let $f_0 = h_i^{-1} g^{-1} f$. By construction, $f_0(\alpha) = \alpha$, so by Corollary \ref{stabilizer} $f_0 \in (FV)^5$. Then we have $f = g h_i f_0$, where $g \in (FV)^2$, $h_i \in F$, and $f_0 \in (FV)^5$, so $f \in (FV)^8$.
\end{proof}

Putting this all together gives us

\begin{theorem}\label{shifty_qi}
	If $\Sigma$ is a translatable surface with tame end space, then $\scg(\Sigma)$ equipped with the edge metric is quasi-isometric to $\MCG(\Sigma)$.
\end{theorem}
\begin{proof}
	The group $\MCG(\Sigma)$ is locally coarsely bounded by Corollary \ref{local_cb}. The graph $\scg(\Sigma)$ is connected by Lemma \ref{connected}. The action of $\MCG(\Sigma)$ on it is transitive by Lemma \ref{transitive}. Finally, the set of mapping classes that move a vertex a distance at most $1$ is coarsely bounded by Lemma \ref{dist_one}. Thus by Lemma \ref{ms} the action induces a quasi-isometry.
\end{proof}

\section{Equivalent definitions of translatability}\label{other_graphs}

We have just seen that a translatable surface $\Sigma$ with tame end space is quasi-isometric to the translatable curve graph $\scg(\Sigma)$, which is a graph whose vertices are curves. This section establishes that the existence of such a graph is nearly unique to translatable surfaces.

We say ``nearly unique'' because there is one other example: if $\MCG(\Sigma)$ is coarsely bounded, then it is quasi-isometric to any finite-diameter graph. In particular, the curve graph $\curveg(\Sigma)$ of an infinite-type surface always has diameter $2$, giving a trivial quasi-isometry. For this reason, coarsely bounded mapping class groups are excluded in the hypothesis of Theorem \ref{only_shifty}.

Another condition which we show to be equivalent to translatability is the following, due to Horbez, Qing, and Rafi \cite{hqr}.

\begin{defn}[Definition 1.8 of \cite{mr}]
	A connected, finite-type subsurface $S$ of a surface $\Sigma$ is called \emph{nondisplaceable} if $f(S) \cap S \ne \emptyset$ for each $f \in \MCG(\Sigma)$. A non-connected surface is nondisplaceable if, for every $f \in \MCG(\Sigma)$ and every connected component $S_i$ of $S$, there is a connected component $S_j$ of $S$ such that $f(S_i) \cap S_j \ne \emptyset$.
\end{defn}

\begin{defn}[Definition 4.4 of \cite{hqr}]\label{avenue}
	An \emph{avenue surface} is a connected, orientable surface $\Sigma$ which does not contain any nondisplaceable finite-type subsurfaces, whose end space is tame, and whose mapping class group $\MCG(\Sigma)$ admits a coarsely bounded generating set but is not itself coarsely bounded.
\end{defn}

\begin{theorem}\label{only_shifty}
	Let $\Sigma$ be an infinite-type surface with tame end space such that $\MCG(\Sigma)$ is locally coarsely bounded and admits a coarsely bounded generating set---and thus has a well-defined quasi-isometry type---but is not itself coarsely bounded. Then the following are equivalent:
	\begin{enumerate}
		\item There exists a connected graph $\Gamma$ whose vertices are curves, such that the action of $\MCG(\Sigma)$ on $\Gamma$ is defined and induces a quasi-isometry.
		\item $\Sigma$ is translatable.
		\item $\Sigma$ is an avenue surface.
	\end{enumerate}
\end{theorem}

\begin{proof}
	$2 \implies 1$: This is Theorem \ref{shifty_qi}.

	$2 \implies 3$: We are already assuming that $\Sigma$ has tame end space and that $\MCG(\Sigma)$ admits a coarsely bounded generating set but is not itself coarsely bounded. By the definition of a translation map, a translatable surface cannot have any finite-type nondisplaceable surfaces, and so $\Sigma$ is an avenue surface.

	$3 \implies 2$: Lemma 4.5 of \cite{hqr} says that an avenue surface has zero or infinite genus and exactly two maximal ends, while Lemma 4.6 of \cite{hqr} says that every nonmaximal end of an avenue surface precedes both maximal ends under the standard ordering. It follows by Lemma \ref{almost_two_ends} that $\Sigma$ is translatable.

	$1 \implies 2$: We divide our work into three cases, depending on the genus and maximal ends of $\Sigma$:
	\begin{enumerate}
		\item If $\Sigma$ has zero or infinite genus and one or a Cantor set of equivalent maximal ends, then by Corollary \ref{self_sim} the group $\MCG(\Sigma)$ is coarsely bounded, which is excluded by the hypothesis of the theorem.
		\item If $\Sigma$ has zero or infinite genus and two equivalent maximal ends, then by Proposition \ref{two_ends} it is translatable.
		\item If $\Sigma$ has finite positive genus or any other structure of maximal ends, then by Proposition \ref{no_luck} there is no graph whose vertices are curves and on which the action of $\MCG(\Sigma)$ induces a quasi-isometry, contradicting our assumption.
	\end{enumerate}
	Thus the only remaining possibility is that $\Sigma$ is translatable.
\end{proof}

The following three subsections correspond to the three cases in the last step of the proof of Theorem \ref{only_shifty}.

\subsection{Coarsely bounded mapping class groups}

The first case is essentially a rehash of the following facts. Recall that $\mc{M}(\Sigma)$ is the set of maximal ends of $\Sigma$.

\begin{fact}[Proposition 3.1 of \cite{mr}]
	If $\Sigma$ has zero or infinite genus and self-similar end space, then $\MCG(\Sigma)$ is coarsely bounded.
\end{fact}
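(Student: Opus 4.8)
The plan is to verify Rosendal's criterion (Fact~\ref{cb_nbhd}) with $A = \MCG(\Sigma)$: it suffices to show that for every identity neighborhood $V$ there are a finite set $F \subseteq \MCG(\Sigma)$ and a $k \in \nat$ with $\MCG(\Sigma) = (FV)^k$. Since the sets $V_K$ with $K$ a connected compact subsurface-with-boundary are cofinal among identity neighborhoods, I would fix such a $K$ and take $V = V_K$, allowing $F$ to depend on $K$; this dependence is exactly what makes the argument tractable.

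The first substantive step is to unpack self-similarity. Using that $(E(\Sigma), E_G)$ is self-similar, one can enlarge $K$ so that some complementary component $R$ of $\Sigma \setminus K$ is a one-holed copy of $\Sigma$: there is a separating curve $\gamma = \partial R$ with $\gamma_+ = R \cong \Sigma \setminus \mathring D$ and $\gamma_-$ a finite-type surface containing $K$. The genus hypothesis is precisely what lets this be arranged compatibly: if $\Sigma$ has infinite genus then $E_G \neq \emptyset$, so a clopen set carrying a copy of $E(\Sigma)$ automatically carries genus-accumulating ends and $R$ inherits infinite genus, while if $\Sigma$ is planar there is nothing to track. Because $R$ is itself a one-holed self-similar surface, I would iterate to produce a nested sequence $R = R_0 \supset R_1 \supset R_2 \supset \cdots$ of one-holed copies of $\Sigma$ with $\bigcap_n R_n = \emptyset$ and each shell $\overline{R_{n-1} \setminus R_n}$ homeomorphic to a fixed two-holed surface $B$, exhibiting $R$ as an infinite one-sided connected sum of copies of $B$. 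Along this axis lives a mapping class $\sigma$ that shifts the shells; together with finitely many change-of-coordinates classes $c_1,\dots,c_m$ (the analogues of Lemmas~\ref{transitive} and~\ref{rotation} in the translatable setting) these should act on the copies of $K$ inside $\Sigma$ with bounded coarse-transitivity relative to $V_K$ — i.e.\ for the fixed finite set $F_0 = \{\sigma^{\pm1}, c_i^{\pm1}\}$, every copy of $K$ equals $(F_0 V_K)^j K$ for a bounded $j$.

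Granting that, the proof concludes by a swindle. Given $f \in \MCG(\Sigma)$, coarse transitivity on copies of $K$ provides $w \in (F_0 V_K)^j$ with $wf \in V_K$; composing with a further power of $\sigma$ we may assume the support of $wf$ lies in $R$ and is pushed to infinity by $\sigma$, so that the conjugates $\sigma^n(wf)\sigma^{-n}$, $n \ge 0$, are supported in the pairwise-disjoint shells. The Mazur/infinite-swindle identity $wf = [g,\sigma]$ with $g = \prod_{n \ge 0}\sigma^n(wf)\sigma^{-n} \in V_K$ — the product converges in $\MCG(\Sigma)$ and its ``tail'' would live in $\bigcap_n R_n = \emptyset$ — then writes $wf$ as a product of boundedly many elements of $V_K$ and $\{\sigma^{\pm1}\}$. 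Collecting factors gives $\MCG(\Sigma) = (FV_K)^k$ with $F = F_0$ and an absolute $k$, so by Fact~\ref{cb_nbhd} the group $\MCG(\Sigma)$ is coarsely bounded.

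The main obstacle is the structural bookkeeping of the second step: producing $\gamma$ with $\gamma_+ \cong \Sigma\setminus\mathring D$ from self-similarity while correctly placing genus, and then arranging the tower $R_n$ and the shift $\sigma$ so that $\sigma$ is an honest homeomorphism of $\Sigma$ (the boundary-circle counts of successive shells must match up) and so that $\MCG(\Sigma)$ really is coarsely transitive, relative to $F_0 \cup V_K$, on the copies of $K$. In the translatable setting the paper performs exactly this flavor of bookkeeping via Proposition~\ref{segments}; here the analogous input — that a self-similar surface of genus $0$ or $\infty$ self-embeds as the complement of any of its large compact subsurfaces, with a well-behaved shift along the induced axis — is where essentially all of the work lies.
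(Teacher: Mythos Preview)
The paper does not prove this statement; it is quoted verbatim as Proposition~3.1 of Mann--Rafi \cite{mr} and used as a black box. There is therefore no proof in the present paper against which to compare your attempt.

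On your sketch itself, two remarks. First, as written the outline is internally redundant. You assert that ``coarse transitivity on copies of $K$'' produces, for a \emph{bounded} $j$, some $w \in (F_0 V_K)^j$ with $wf \in V_K$. But if that holds you are already finished: $f = w^{-1}(wf)$ lies in $(V_K F_0^{-1})^j V_K$, and the entire swindle paragraph does no work. The sentence you wave your hands at is essentially the full content of the proposition, while the step you write out carefully is unnecessary once you have it. If what you intended was the weaker conclusion $wf(K) \cap K = \emptyset$ (or $wf(K) \subset R$), say so; then the swindle has a role, but you must also explain why that weaker statement can be achieved with $w$ of bounded length, and that is again where the real difficulty lies.

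Second, the one-sided shift $\sigma$ cannot simultaneously be a homeomorphism of $\Sigma$, send each $R_n$ onto $R_{n+1}$, and be the identity outside $R_0$: it would fail to surject onto the first shell $R_0 \setminus R_1$. The workable construction is to choose any $\sigma \in \MCG(\Sigma)$ with $\sigma(R_0) \subsetneq R_0$, \emph{define} $R_n := \sigma^n(R_0)$, and then check that $\bigcap_n R_n$ contains no point of the open surface (only an end), so that the infinite product converges. You flag this concern yourself in your final paragraph, correctly; it is not merely bookkeeping but the place where the self-similarity hypothesis on the end space, together with the zero-or-infinite genus assumption, is genuinely used.
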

\begin{fact}[Proposition 4.8 of \cite{mr}]
	If $\Sigma$ has no nondisplaceable finite-type subsurfaces and $\mc{M}(\Sigma)$ consists of either a singleton or a Cantor set of equivalent ends, then its end space is self-similar.
\end{fact}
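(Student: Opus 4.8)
The plan is to unwind the definition of self-similarity and reduce everything to a single local statement about a maximal end. Recall that $(E, E_G)$ is self-similar if for every partition $E = E_1 \sqcup \dots \sqcup E_n$ into clopen pieces, some $E_i$ contains a clopen subset $D$ with $(D, D \cap E_G)$ homeomorphic to $(E, E_G)$. Since a clopen piece containing a given point is in particular a clopen neighborhood of that point, it suffices to produce a maximal end $x$ with the property that \emph{every} clopen neighborhood of $x$ contains such a clopen copy of $(E, E_G)$. In the singleton case $x$ is forced; in the Cantor case we may take any $x \in \mc{M}(\Sigma)$, since this property transfers across the equivalence class $E(x)$ (if $y \sim x$ then small clopen neighborhoods of $y$ contain copies of clopen neighborhoods of $x$, which contain copies of $E$). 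The first real observation, used in both cases, is that \emph{every} end of $\Sigma$ is $\preccurlyeq x$: every end lies weakly below a maximal end (a standard consequence of compactness of $E$, cf.\ Fact \ref{max_exist}), and all maximal ends are equivalent to $x$.

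The two ingredients I would feed into the argument are: (a) a displaceability statement coming from the hypothesis on finite-type subsurfaces --- for every nonempty clopen $U \subsetneq E$ there is a homeomorphism $\phi$ of $(E, E_G)$ with $\phi(U) \cap U = \emptyset$ --- obtained by taking an annular neighborhood of a curve cutting off the clopen set $U$ and invoking that this finite-type subsurface is displaceable (the resulting homeomorphism automatically preserves $E_G$, and in the singleton case it fixes $x$, since homeomorphisms preserve maximal ends); and (b) compactness: given a clopen neighborhood $U$ of $x$, the fact that every end is $\preccurlyeq x$ lets us cover $E$ by finitely many clopen sets each admitting a clopen embedding into $U$, and after refining we obtain a partition $E = W_1 \sqcup \dots \sqcup W_k$ with $x \in W_1 \subseteq U$ and each $W_j$ clopen-embeddable in $U$.

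The assembly step: writing $R = W_2 \sqcup \dots \sqcup W_k = E \setminus W_1$, I would use (a) to push a disjoint copy of $R$ into $W_1$; chasing the resulting homeomorphism yields a decomposition $W_1 \cong R \sqcup W_1'$ with $W_1'$ a strictly smaller clopen neighborhood of $x$. Iterating this down a nested sequence of neighborhoods of $x$ and running an Eilenberg-swindle-type argument should then exhibit a clopen copy of $E \cong R \sqcup W_1$ inside $U$, which completes the reduction and hence the proof.

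The step I expect to be the main obstacle is precisely this assembly --- converting "arbitrarily many pairwise disjoint copies of $R$ sitting inside $U$" into "one copy of $E$ inside $U$". This requires controlling the nested intersection of the shrinking neighborhoods of $x$, and, in the Cantor case, choosing the displacing homeomorphisms compatibly with the homogeneous Cantor set $\mc{M}(\Sigma)$ (a homeomorphism need not fix a prescribed maximal end when there are many). An alternative that sidesteps the bare-hands swindle, and is closer in spirit to the proof of Lemma \ref{just_stable}, would be to first use the displaceability input to show that each maximal end has a stable neighborhood, then decompose $(E, E_G)$ into finitely many stable pieces and repeatedly absorb the non-stable remainder into the piece containing $x$ via Facts \ref{stable_homeo} and \ref{add_in}, finally checking that the enlarged piece around $x$ is homeomorphic to all of $(E, E_G)$; there the crux shifts to establishing stability of maximal ends \emph{without} assuming tameness, which is exactly where the ``no nondisplaceable finite-type subsurface'' hypothesis must be doing its work.
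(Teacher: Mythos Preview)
The paper does not prove this statement: it is recorded as a \emph{Fact} and attributed to Proposition~4.8 of \cite{mr}, with no argument given. So there is no ``paper's own proof'' against which to compare your proposal; any assessment here is of your outline on its own terms and, implicitly, against the Mann--Rafi argument you would have to look up.

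On your outline itself, two remarks. First, your ingredient~(a) is not quite what displaceability of an annulus gives you. If $\alpha$ bounds the clopen set $U$ and $f$ displaces a regular neighborhood of $\alpha$, then $f(\alpha)$ is disjoint from $\alpha$, but this only tells you that one of the four containments $f(U)\subseteq U$, $U\subseteq f(U)$, $f(U)\subseteq U^c$, $U^c\subseteq f(U)$ holds---you do not automatically get $\phi(U)\cap U=\emptyset$. What you actually obtain (after possibly replacing $f$ by $f^{-1}$ and $U$ by $U^c$) is a proper nesting, which is still useful but changes the shape of the assembly step. Second, you have correctly identified the assembly/swindle as the nontrivial part; your alternative route via first establishing stability of maximal ends and then absorbing pieces as in Lemma~\ref{just_stable} is closer to how Mann--Rafi organize things, and is probably the cleaner path. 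Either way, what you have written is a plan rather than a proof, and the paper you are reading expects you to take the result as a black box from \cite{mr}.
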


To link these two facts together we need to add the assumption of tameness:

\begin{cor}\label{self_sim}
	If $\Sigma$ has zero or infinite genus and tame end space, and $\mc{M}(\Sigma)$ consists of either a singleton or a Cantor set of equivalent ends, then $\MCG(\Sigma)$ is coarsely bounded.
\end{cor}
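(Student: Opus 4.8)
The plan is to reduce the corollary to the first Fact of this subsection (Proposition 3.1 of \cite{mr}) by showing that the end space of $\Sigma$ is self-similar; since $\Sigma$ has genus $0$ or $\infty$, that Fact then immediately yields coarse boundedness. So the whole task is to promote the hypothesis on $\mc{M}(\Sigma)$, together with tameness, into self-similarity of $E(\Sigma)$.

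First I would apply tameness: by Definition \ref{tame_defn} every maximal end of $\Sigma$ has a stable neighborhood, so Lemma \ref{just_stable} (with $T = \Sigma$) writes $E(\Sigma) = \bigsqcup_{i=1}^{k} V_i$ with each $V_i$ a stable neighborhood of a maximal end. The point is that the pieces of this decomposition correspond to equivalence classes of maximal ends: inspecting the proof of Lemma \ref{just_stable}, the indexing set $M$ consists of a single representative of each $\sim$-class of maximal ends meeting $E(\Sigma)$ in an infinite (hence Cantor) set, together with every maximal end whose class is finite, and the final decomposition is $\bigsqcup_{x \in M} V_x$. Under our hypothesis $\mc{M}(\Sigma)$ is a single $\sim$-class --- a singleton or a Cantor set of equivalent ends --- so $|M| = 1$ and $k = 1$; that is, $E(\Sigma)$ is itself a stable neighborhood of its (essentially unique) maximal end $x$. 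Self-similarity is then automatic: given any finite clopen partition $E(\Sigma) = E_1 \sqcup \cdots \sqcup E_n$, the part $E_i$ containing $x$ is a clopen neighborhood of $x$ inside the stable neighborhood $E(\Sigma)$, hence contains a clopen subset homeomorphic (respecting $E_G$) to $E(\Sigma)$.

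The only genuinely nontrivial step is establishing that $k = 1$, i.e.\ that all of $E(\Sigma)$ can be seen as one stable neighborhood; this is exactly where ``a singleton or a Cantor set of equivalent ends'' is used, and it forces one to look inside the proof of Lemma \ref{just_stable} rather than quote its statement alone. If one prefers to avoid that, the same conclusion can be obtained by rerunning the absorption argument directly: start from a stable neighborhood $U \ni x$ furnished by tameness and absorb each remaining end $y$ into $U$ via Fact \ref{add_in}, which applies because $y \prec x$ makes $x$ an accumulation point of $E(y)$, while $y \sim x$ with $y \notin U$ forces $E(x) = E(y)$ to be an infinite --- hence, by Fact \ref{max_exist}, Cantor --- set accumulating at $x$.
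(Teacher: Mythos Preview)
Your argument is correct, but it takes a genuinely different route from the paper's. The paper does not directly verify self-similarity; instead it shows that $\Sigma$ has no nondisplaceable finite-type subsurfaces (by explicitly building, for any finite-type $S$, a mapping class that pushes $S$ off itself using a stable neighborhood of a maximal end and Fact~\ref{add_in}), and then invokes Proposition~4.8 of \cite{mr} to conclude self-similarity. Your approach bypasses nondisplaceability entirely: you use Lemma~\ref{just_stable} (or rather its proof) to exhibit $E(\Sigma)$ as a single stable neighborhood of the unique maximal $\sim$-class, from which self-similarity is immediate. This is shorter and stays purely at the level of the end space, whereas the paper's route produces an actual displacing homeomorphism---a more concrete payoff, and one that does not require peeking inside the proof of Lemma~\ref{just_stable}. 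Your alternative sketch (absorbing directly via Fact~\ref{add_in}) removes even that dependence and is essentially the specialization of that lemma's proof to $|M|=1$.
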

\begin{proof}
	Given the previous facts, we need only show that $\Sigma$ has no nondisplaceable finite-type subsurfaces. Let $S$ be a finite-type subsurface of $\Sigma$. By expanding $S$ we may assume that $S$ is connected with all its boundary curves essential and separating. Let $E_1, \dotsc, E_n$ be the end spaces of the complementary components of $S$, and $E_0$ the end space of $S$ itself, which may be empty or contain a finite set of punctures. Since $E_0 \sqcup \dotsb \sqcup E_n = E$, there is a maximal end $x$ in some $E_i$; without loss of generality we may assume $x \in E_n$. Let $\alpha$ be the boundary component of $S$ corresponding to $E_n$.

	Since $\Sigma$ has tame end space, $E_n$ contains a stable neighborhood $U$ of $x$. For every end $y \in E \setminus E_n$, $x$ is an accumulation point of $E(y)$, so by Fact \ref{add_in} there is a clopen neighborhood $V_y \subseteq E \setminus E_n$ of $y$ such that $U$ is homeomorphic to $U \sqcup V_y$. By compactness, $E \setminus E_n$ can be covered by finitely many such clopen neighborhoods, meaning that there is a clopen subset $F \subseteq E_n$ homeomorphic to $E \setminus E_n$. Let $\beta$ be a separating curve whose complementary components have end spaces $F$ and $E \setminus F$, and such that the component of $\Sigma \setminus \beta$ with end space $F$ has the same genus as the component of $\Sigma \setminus \alpha$ containing $S$.

	The complementary components of the curves $\alpha$ and $\beta$ have the same genus and end space by construction, so we can find some $f \in \MCG(\Sigma)$ exchanging $\alpha$ and $\beta$. Then $f(S)$ and $S$ are in distinct components of $\Sigma \setminus \alpha$, and so the subsurface $S$ is not nondisplaceable; thus $\Sigma$ has no nondisplaceable finite-type subsurfaces. It follows that the end space of $\Sigma$ is self-similar, and so $\MCG(\Sigma)$ is coarsely bounded.
\end{proof}

\subsection{Translatable surfaces}

\begin{prop}\label{two_ends}
	Suppose $\Sigma$ has tame end space, zero or infinite genus, and exactly two equivalent maximal ends, $e_+$ and $e_-$. If $\MCG(\Sigma)$ is locally coarsely bounded and has a coarsely bounded generating set, then $\Sigma$ is translatable with respect to the ends $e_+$ and $e_-$.
\end{prop}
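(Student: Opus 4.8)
The plan is to prove that $\Sigma\cong S^{\natural\zah}$ for some surface $S$ with two compact boundary components, so that the shift carrying each copy of $S$ to the next is a translation whose repelling and attracting ends are the two ``ends at infinity'' of the chain; we will arrange these to be $e_-$ and $e_+$. A cheap first remark: since $e_+$ and $e_-$ are the only maximal ends and $e_+\sim e_-$, both are global maxima of $\preccurlyeq$ (recall every end lies below a maximal end, and $e_-\preccurlyeq e_+$), and an end equivalent to a maximal end is maximal, so every end other than $e_\pm$ strictly precedes both $e_+$ and $e_-$. If the ``two-ended'' criterion used in the proof of Theorem \ref{only_shifty} is taken as given, translatability follows immediately; otherwise I would argue directly as follows.

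By tameness $e_+$ and $e_-$ have stable clopen neighborhoods, and since $e_+\sim e_-$, Fact \ref{stable_homeo} lets me shrink them to disjoint stable neighborhoods $U_+\ni e_+$, $U_-\ni e_-$ with a homeomorphism $U_+\cong U_-$ preserving $E_G$ and swapping $e_+,e_-$. Every end $y$ in $R=E\setminus(U_+\sqcup U_-)$ has $y\prec e_+$, so the global maximum $e_+$ is an accumulation point of $E(y)$ (any clopen neighborhood of $e_+$ contains a homeomorphic copy of a neighborhood of $y$, hence a point of $E(y)\setminus\{e_+\}$), and Fact \ref{add_in} applies. Covering the compact set $R$ by finitely many disjoint, sufficiently small clopen pieces and absorbing them into $U_+$ by repeated use of Fact \ref{add_in} — each absorption replaces $U_+$ by a homeomorphic, hence stable, neighborhood, with the homeomorphism fixing $e_+$ since it is the unique maximal end on each side — I pass to a homeomorphic surface with $E=U_+\sqcup U_-$, where $U_\pm$ is stable with unique maximal end $e_\pm$, hence self-similar. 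If $\Sigma$ has infinite genus I note separately that then $e_+,e_-\in E_G$ automatically (a global maximal end lies in $\overline{E_G}=E_G$, as each of its neighborhoods contains a homeomorphic copy of a neighborhood of an end of $E_G$), so the genus can be distributed using the second part of Lemma \ref{construct}.

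The core step is to show that $U_+$ \emph{periodizes}: $U_+=\{e_+\}\sqcup\bigsqcup_{k\ge0}A_k$ with the $A_k$ clopen, pairwise homeomorphic compatibly with $E_G$, and converging to $e_+$. Granting this and fixing a curve $\alpha$ with $E(\alpha_+)=U_+$, Lemma \ref{construct} produces curves $\gamma_0,\gamma_1,\dots$ converging to $e_+$ (set $\gamma_{-1}=\alpha$) for which every $(\gamma_{k-1},\gamma_k)$ is homeomorphic to a single model surface $S$ with two boundary circles — adjusting the genus of $S$ by the second part of Lemma \ref{construct} when $\Sigma$ has infinite genus — so $\alpha_+\cong S\natural S\natural\cdots$. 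Transporting the construction through $U_+\cong U_-$ gives the same on the $e_-$ side with the same $S$, and gluing the two half-infinite chains along $\alpha$ yields $\Sigma\cong S^{\natural\zah}$, which is translatable with respect to $e_+$ and $e_-$.

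I expect the periodization to be the hard part, and this is exactly where tameness (making $U_+$ self-similar with a unique maximal end) and the coarsely bounded generating set are used. If $U_+$ failed to periodize, the obstruction yields a nested family of clopen neighborhoods of $e_+$ and pairwise inequivalent ends $z_0,z_1,\dots$ accumulating at $e_+$ whose equivalence classes sit in precisely the configuration defining limit type, with the copies of each $z_n$ required to lie outside a fixed neighborhood of $e_+$ being furnished by the mirror neighborhood $U_-\cong U_+$ — here both ``exactly two maximal ends'' and ``equivalent'' enter essentially. This contradicts Fact \ref{limit_type}, since $\MCG(\Sigma)$ has a coarsely bounded generating set. (No separate treatment is needed if $\MCG(\Sigma)$ is itself coarsely bounded: it is then its own coarsely bounded generating set, the hypotheses hold, and the conclusion follows, the translatable curve graph then having finite diameter.)
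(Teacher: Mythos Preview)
Your opening reduction is exactly the paper's proof: note that every end other than $e_\pm$ satisfies $y\preccurlyeq e_+\sim e_-$, then invoke Lemma~\ref{almost_two_ends}. That is the complete argument for Proposition~\ref{two_ends} in the paper.

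Your alternative direct argument is essentially an outline of the proofs of Lemmas~\ref{pred_exist} and~\ref{almost_two_ends} rolled together, reframed around the notion of ``periodization'' of $U_+$. The broad strokes are right---absorb the complement into stable neighborhoods via Fact~\ref{add_in}, then decompose $U_+\setminus\{e_+\}$ into homeomorphic shells---but the step you flag as hard really is the hard step, and your treatment of it is a gap rather than a proof. The assertion ``if $U_+$ failed to periodize, the obstruction yields \dots\ limit type'' hides two separate arguments that the paper carries out in Lemma~\ref{pred_exist}: first, that every $y\prec e_+$ lies below an \emph{immediate} predecessor of $e_+$ (built by a maximal-chain construction, with limit type ruling out an infinite strictly increasing sequence), and second, that there are only finitely many equivalence classes of such predecessors (again ruling out limit type, this time with a different configuration of the $z_n$). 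Neither implication is automatic from ``$U_+$ does not periodize,'' and producing the copies of $E(z_n)$ in $U_0^c$ requires the self-similarity coming from Fact~\ref{cb_class} rather than merely the existence of the mirror $U_-$. Once those predecessors $x_1,\dots,x_k$ are in hand, Lemma~\ref{just_stable} gives the periodic decomposition you want, and your endgame (building $S$ and $\Sigma\cong S^{\natural\zah}$ via Lemma~\ref{construct}) matches the paper's. So: your short proof is correct and identical to the paper's; your long proof is the right shape but needs the actual content of Lemma~\ref{pred_exist} to close.
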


Our first step towards proving Proposition \ref{two_ends} will be to find the immediate predecessors of the maximal ends of $\Sigma$.

\begin{lemma}\label{pred_exist}
	Let $\Sigma$ be a surface with tame end space and two maximal ends, $e_+$ and $e_-$. If $\MCG(\Sigma)$ is locally coarsely bounded and admits a coarsely bounded generating set, then there is a finite set of ends $x_1, \dotsc, x_n$ such that each $x_i$ is an immediate predecessor of $e_+$, and every end $y \prec e_+$ satisfies $y \preccurlyeq x_i$ for some $i$.
\end{lemma}
\begin{proof}
	Find $K$ as in Fact \ref{cb_class}, with complementary region $A$ containing the end $e_+$, and let $U_0$ be the end space of $A$. Fix $y \prec e_+$; by possibly replacing $y$ with an equivalent end, we may assume $y \in U_0$. Let $U_1$ be a clopen subset of $U_0 \setminus \set{y}$ containing $e_+$, and construct a neighborhood basis $U_0 \supseteq U_1 \supseteq U_2 \supseteq \dotsb$ of clopen sets such that $\bigcap_{n \in \nat} U_n = \set{e_+}$.

	The set $\set{x \in U_0 \setminus U_1 \mid y \preccurlyeq x}$ is nonempty, and is closed by Fact \ref{bigger_closed}, and so by Lemma \ref{closed_max_exist} it has a (not necessarily unique) maximal element which we will call $z_0$. To continue this construction we would like to let $z_n$ be a maximal element of the set $\set{x \in U_n \setminus U_{n+1} \mid z_{n-1} \preccurlyeq x}$ for each $n > 0$, but this set may be empty. Instead we pass to a subsequence, letting $k_0 = 0$ and $k_{n+1}$ be the least natural number greater than $k_n$ such that $(U_{k_n} \setminus U_{k_{n+1}}) \cap E(z_{n-1})$ is nonempty. Then we can set $z_n$ to be a maximal element of $\set{x \in U_{k_n} \setminus U_{k_{n+1}} \mid z_{n-1} \preccurlyeq x}$.

	We claim there is some $z$ such that $z_n \sim z$ for all sufficiently high $n$. If not, then up to taking a subsequence we may assume the $z_n$ are pairwise inequivalent. By construction each $z_n \in U_{k_n}$, so $E(z_n) \cap U_{k_n} \ne \emptyset$. For any $m < n$, $z_m \prec z_n$ but $z_m$ was maximal in $U_{k_m} \setminus U_{k_{m+1}}$, so $E(z_n) \cap (U_{k_m} \setminus U_{k_{m+1}}) = \emptyset$, and thus in general $E(z_n) \cap (U_0 \setminus U_{k_n}) = \emptyset$, or in other words $E(z_n) \subseteq (U_{k_n} \cup U_0^c)$. Finally, let $B$ be a subsurface of $A$ with end space $U_{k_{n+1}}$. By Fact \ref{cb_class} there is a mapping class $f \in \MCG(\Sigma)$ so that $A \subseteq f(B)$. Then $f(z_n) \in U_0^c \cap E(z_n)$ so this set is not empty.

	Let $G = \set{f \in \MCG(\Sigma) \mid f(e_+) = e_+}$. Since the only end of $\Sigma$ that might be equivalent to $e_+$ is $e_-$, $G$ has index at most two in $\MCG(\Sigma)$, and the set $\set{e_+}$ is $G$-invariant. Thus we have fulfilled the definition of limit type, and so by Fact \ref{limit_type} $\MCG(\Sigma)$ cannot admit a coarsely bounded generating set. Since we assumed otherwise, this is a contradiction. This proves our claim that $z_n \sim z$ for all sufficiently high $n$. This $z$ must be an immediate predecessor of $e_+$ (otherwise it would not be maximal in some $U_{k_n} \setminus U_{k_{n+1}}$) and by construction $y \preccurlyeq z$.

	We now claim that there is a clopen subset $F \subseteq U_0$ such that $e_+ \not\in F$ but for every immediate predecessor $z$ of $e_+$, $F \cap E(z) \ne \emptyset$. Suppose not. Then we can pick a sequence of immediate predecessors $\set{z_n}_{n \in \nat}$ of $e_+$ and clopen sets $U_0 \supseteq U_1 \supseteq U_2 \supseteq \dotsb$ with $\bigcap_{n \in \nat} U_n = \set{e_+}$ such that each $z_n \in U_n$ but $E(z_n) \cap (U_0 \setminus U_n) = \emptyset$. As above we can use Fact \ref{cb_class} to find an element of $E(z_n)$ in $U_0^c$, so this would again show that $E$ has limit type, a contradiction by Fact \ref{limit_type}. This proves our claim.

	For each end $y \in F$, let $x_y \in E$ be an immediate predecessor of $e_+$ with $y \preccurlyeq x_y$. That means that for a stable neighborhood $V_{x_y}$ of $x_y$---which must exist because $E$ is tame---there is some clopen neighborhood $U_y$ of $y$ such that $U_y$ is homeomorphic to a clopen subset of $V_{x_y}$. The sets $\set{U_y}_{y \in F}$ cover the compact set $F$, so we can pick a finite collection $U_1, \dotsc, U_n$ with corresponding $x_1, \dotsc, x_n$ predecessors to $e_+$ so that the $U_i$ cover $F$ and each $U_i$ is homeomorphic to a clopen subset of $V_{x_i}$. In particular, by stability $z \preccurlyeq x_i$ for every $z \in U_i$ and so every end in $F$ is bounded above by one of the $x_i$. If $z$ is an immediate predecessor of $e_+$, then by construction there is some $z' \sim z$ in $F$, and so $z \sim z' \preccurlyeq x_i$ for some $i$. Since $z$ is an immediate predecessor of $e_+$, it follows that $z \sim x_i$, so there are only finitely many equivalence classes of immediate predecessors.
\end{proof}

The following lemma is nearly identical to Proposition \ref{two_ends}; we list it separately so that it can be used in other parts of this section.

\begin{lemma}\label{almost_two_ends}
	Let $\Sigma$ be a surface of zero or infinite genus with tame end space and two maximal ends, $e_+$ and $e_-$, with the property that for every end $y \in E \setminus \set{e_+, e_-}$, $y \preccurlyeq e_+$ and $y \preccurlyeq e_-$. If $\MCG(\Sigma)$ is locally coarsely bounded and admits a coarsely bounded generating set, then $\Sigma$ is translatable.
\end{lemma}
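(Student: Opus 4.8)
The plan is to build a translation $h \in \MCG(\Sigma)$ directly, by first decomposing $\Sigma$ into countably many identical ``slabs'' and then letting $h$ shift the slabs. The input we have is a surface of zero or infinite genus, with tame end space, two maximal ends $e_+$ and $e_-$, and with every other end $\preccurlyeq e_+$ and $\preccurlyeq e_-$. The first step is to identify a good family of curves separating $e_+$ from $e_-$. Apply Fact \ref{cb_class} to get a finite-type $K$ whose complementary region $A$ has end space $U_0 \ni e_+$ and is self-similar with $\mc{M}(A) \subseteq \mc{M}(E)$; likewise there is a complementary region $A'$ with end space $U_0' \ni e_-$. Since $e_+$ is maximal and tame it has a stable neighborhood; since the only end possibly equivalent to $e_+$ is $e_-$, and both $A$ and $A'$ carry maximal ends, we get that $A$ and $A'$ each contain stable neighborhoods of $e_+$ and $e_-$ respectively, and these two neighborhoods are homeomorphic by Fact \ref{stable_homeo}.

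Next I would nail down the local model near $e_+$. Using Lemma \ref{pred_exist}, there are finitely many equivalence classes of immediate predecessors $x_1, \dotsc, x_k$ of $e_+$, each with a stable neighborhood by tameness, and every $y \prec e_+$ has $y \preccurlyeq x_i$ for some $i$. A compactness argument in the spirit of Lemma \ref{just_stable} then lets me choose a clopen neighborhood $W$ of $e_+$, with $e_+ \notin \partial W$, whose end space is a stable neighborhood of $e_+$, and such that the ``collar'' end space $U_0 \setminus W$ is a finite disjoint union of stable neighborhoods of the $x_i$ together with (since $e_+$ is maximal and the genus is $0$ or infinite) whatever genus is present handled as in Lemma \ref{construct}. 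By Lemma \ref{construct} I can realize $W$ and $U_0 \setminus W$ by actual curves $\alpha \subseteq A$ separating $e_+$ and $e_-$. The point is that because the end space of $W$ is a \emph{stable} neighborhood of the maximal end $e_+$, and by Fact \ref{add_in} any smaller neighborhood of $e_+$ together with the collar is again homeomorphic to $W$, the subsurface ``between'' $\alpha$ and a sufficiently small pushed-in copy $\alpha'$ of $\alpha$ is homeomorphic to the subsurface between $\alpha'$ and an even smaller copy. Symmetrically do this near $e_-$.

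Now for the shift itself. I want a sequence of curves $\dotsc, \alpha_{-1}, \alpha_0, \alpha_1, \dotsc$ separating $e_+$ from $e_-$, pairwise disjoint, with $\alpha_n \to e_+$ as $n \to +\infty$ and $\alpha_n \to e_-$ as $n \to -\infty$, such that every ``slab'' $(\alpha_n, \alpha_{n+1})$ is homeomorphic to a single fixed surface $S$ with two boundary components. Starting from $\alpha_0 = \alpha$, I construct $\alpha_1, \alpha_2, \dotsc$ inside smaller and smaller neighborhoods of $e_+$ using Lemma \ref{construct}, arranging at each stage (via Fact \ref{add_in} and Fact \ref{stable_homeo} applied to the stable neighborhood of $e_+$ and of the finitely many predecessor classes) that each new slab has the same end space and genus type as $(\alpha_0,\alpha_1)$; the fact that all of $E(e_+)\cap U_0$, the collars, and the genus are ``absorbable'' into stable neighborhoods is what makes the slabs all homeomorphic. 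Going the other direction toward $e_-$ works the same way using the homeomorphism between the stable neighborhoods of $e_+$ and $e_-$. Once $\Sigma = \bigntrl_{n \in \zah} S_n$ with all $S_n \cong S$, the homeomorphism sending each $S_n$ to $S_{n+1}$ (compatibly with the gluing, which can be arranged by rigidity of $(E,E_G)$, i.e.\ the principal result of \cite{richards}) is a mapping class $h$; and since for any curve $\gamma$ separating $e_+$ and $e_-$ Lemma \ref{transitive} lets me move $\gamma$ into some slab $(\alpha_N, \alpha_{N+1})$, while an arbitrary curve $\gamma$ is in any case trapped between two of the $\alpha_n$, we get $h^n(\gamma) \to e_+$ and $h^{-n}(\gamma) \to e_-$. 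Hence $h$ is a translation and $\Sigma$ is translatable.

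The main obstacle I expect is the absorption bookkeeping in the two previous paragraphs: verifying that one can choose the curves $\alpha_n$ so that \emph{every} slab is homeomorphic to one fixed $S$. This requires simultaneously controlling (a) the stable neighborhood of $e_+$, (b) the finitely many equivalence classes of immediate predecessors of $e_+$ and their stable neighborhoods, (c) the genus, which is $0$ or infinite and so can be pushed along by Lemma \ref{construct}, and (d) the transition across $K$ to the $e_-$ side, where one uses that the stable neighborhoods of $e_+$ and $e_-$ are homeomorphic. Each ingredient is already available (Facts \ref{stable_homeo}, \ref{add_in}, \ref{max_exist}, Lemmas \ref{pred_exist}, \ref{just_stable}, \ref{construct}), so the work is in assembling them into a single inductive construction with a uniform slab; the hypothesis that $\MCG(\Sigma)$ has a coarsely bounded generating set enters precisely through Lemma \ref{pred_exist} (finiteness of the predecessor classes) and local coarse boundedness through Fact \ref{cb_class} (existence of the self-similar region $A$ and the maps $f_V$).
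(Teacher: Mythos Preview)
Your outline is close to the paper's and would ultimately work, but there is one real gap and one omission worth flagging.

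\textbf{The gap.} You invoke Fact~\ref{stable_homeo} to conclude that stable neighborhoods of $e_+$ and $e_-$ are homeomorphic. That fact requires $e_+ \sim e_-$, which is \emph{not} part of the hypothesis here: the lemma only assumes $e_+$ and $e_-$ are both maximal and that every other end lies below both. Indeed, $e_+ \sim e_-$ is a \emph{consequence} of the lemma (via translatability and Proposition~\ref{segments}), and the lemma is later applied in the proof of Lemma~\ref{two_diff_ends} precisely in a situation where $e_+ \not\sim e_-$ is still on the table. The paper avoids this circularity: it never compares neighborhoods of $e_\pm$ directly. Instead it uses only the hypothesis $y \preccurlyeq e_+$ for all $y \ne e_\pm$ to decompose \emph{every} slab---on both sides of $\alpha_0$---via Lemma~\ref{just_stable} in terms of the same finite list $x_1,\dots,x_k$ of immediate predecessors of $e_+$ from Lemma~\ref{pred_exist}. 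That yields a description of $E$ manifestly symmetric in $e_+$ and $e_-$, after which one invokes the classification of surfaces once to get $\Sigma \cong S^{\natural\zah}$.

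\textbf{The omission.} For that classification step (and for your direct construction of $h$) one also needs that $e_+$ is accumulated by genus if and only if $e_-$ is. The paper proves this first, using the maps $f_V$ from Fact~\ref{cb_class}. Your item (c) only records that the total genus is $0$ or infinite; it does not address whether genus accumulates symmetrically at the two maximal ends, and without that the slabs on the $e_-$ side need not match those on the $e_+$ side.

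Apart from these points, the difference is stylistic: you build the slabs inside $\Sigma$ and write down $h$ explicitly, whereas the paper describes $(E,E_G)$ and the genus abstractly and lets Richards' classification produce the homeomorphism $\Sigma \cong S^{\natural\zah}$. The paper's route sidesteps exactly the inductive ``absorption bookkeeping'' you identify as the main obstacle.
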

\begin{proof}
	First, note that $e_+$ is accumulated by genus if and only if $e_-$ is, by the following argument: suppose $e_+$ is accumulated by genus but $e_-$ is not. If some $y \prec e_+$ were accumulated by genus, then $e_-$ would have to be as well since it is an accumulation point of $E(y)$. So $e_+$ is the only end of $\Sigma$ accumulated by genus. Since $\MCG(\Sigma)$ is locally coarsely bounded, we can find a surface $K$ as in Fact \ref{cb_class}. Let $A$ be the component of $\Sigma \setminus K$ containing $e_+$, and note that $A$ is the only such component with nonzero genus. Pick a subsurface $V \subseteq A$ containing $e_+$ and such that $A \setminus V$ has positive genus. By Fact \ref{cb_class}, there is some $f \in \MCG(\Sigma)$ such that $A \subseteq f(V)$. But that would mean $\Sigma \setminus V$ has positive genus while $\Sigma \setminus f(V)$ has zero genus, a contradiction. Thus $e_+$ is accumulated by genus if and only if $e_-$ is.

	Let $x_1, \dotsc, x_k$ be the immediate predecessors to $e_+$ found via Lemma \ref{pred_exist}. Fix a curve $\alpha_0$ separating $e_+$ and $e_-$. Pick a sequence of pairwise disjoint curves $\alpha_1, \alpha_2, \dotsc$ such that $\lim_{n \to \infty} \alpha_n = e_+$---this is possible by definition for any end---and likewise $\alpha_{-1}, \alpha_{-2}, \dotsc$ such that $\lim_{n \to -\infty} \alpha_n = e_-$. By moving to a subsequence, we may assume that for each $n \in \zah$ the subsurface $(\alpha_n, \alpha_{n+1})$ has positive (possibly infinite) genus if $\Sigma$ has infinite genus, and furthermore that the end space of this subsurface includes an element of each $E(x_i)$.

	By Lemma \ref{just_stable}, write the end space of $(\alpha_n, \alpha_{n+1})$ as $\bigsqcup_{j=1}^p V_{n,j}$, where each $V_{n, j}$ is a stable neighborhood of an immediate predecessor of $e_+$. Note that each $V_{n,j}$ is homeomorphic to some $U_i$, a stable neighborhood of the immediate predecessor $x_i$.

	Thus we can describe the end space of $\Sigma$ as follows: for each $i \in \set{1, \dotsc, k}$, there is a countable collection $\set{U_{i,n}}_{n \in \zah}$ of disjoint homeomorphic copies of $U_i$, whose limit points are $e_+$ and $e_-$ as $n$ approaches $\infty$ and $-\infty$ respectively. That is,
	\[ E = \set{e_-, e_+} \sqcup \bigsqcup_{i=1}^k \left( \bigsqcup_{n \in \zah} U_{i,n} \right) = \set{e_-, e_+} \sqcup \bigsqcup_{n \in \zah} \left( \bigsqcup_{i=1}^k U_{i,n} \right)\]
	where each $U_{i, n}$ is a copy of $U_i$, and the only additional topology is given by the limits
	\[ \lim_{n \to \pm\infty} U_{i, n} = e_{\pm} \]
	for each $1 \le i \le k$.

	Let $S$ be a surface with end space $\bigsqcup_{i=1}^k U_i$ and genus defined as follows: if some $x_i$ is accumulated by genus, $S$ will have infinite genus by definition. If $\Sigma$ has zero genus, let $S$ also have zero genus. If $\Sigma$ has infinite genus but no $x_i$ is accumulated by genus---which implies that only $e_+$ and $e_-$ are accumulated by genus---then let $S$ have genus $1$. Then the surface $S^{\natural \zah}$, which is translatable by construction, has genus and end space matching that of $\Sigma$, and so they are homeomorphic. Thus $\Sigma$ is translatable.
\end{proof}

The proof of Proposition \ref{two_ends} follows directly:

\begin{proof}[Proof of Proposition \ref{two_ends}]
	Since $e_+$ and $e_-$ are the only maximal ends of $\Sigma$, and $e_+ \sim e_-$, every end $y \in E$ has $y \preccurlyeq e_+$ and $y \preccurlyeq e_-$. Then we can apply Lemma \ref{almost_two_ends}.
\end{proof}

\subsection{All other surfaces}\label{neg_results}

\begin{remark}
	Many of the proofs in this subsection are inspired by and to some extent duplicate the work in Mann and Rafi's proof of Fact \ref{cb_class}. They are included for completeness.
\end{remark}

We have covered the cases where $\Sigma$ has zero or infinite genus and either one maximal end, two equivalent maximal ends, or a Cantor set of equivalent maximal ends. We now show that if the maximal ends of $\Sigma$ have any other structure, or if $\Sigma$ has finite positive genus, there is no graph whose vertices are curves onto which the action of $\MCG(\Sigma)$ induces a quasi-isometry. Our main tool will be the following observation of Mann and Rafi:

\begin{fact}[Lemma 5.2 of \cite{mr}]
	Let $K \subseteq \Sigma$ be a finite-type subsurface. If there exists a finite-type, nondisplaceable (possibly disconnected) subsurface $S \subseteq \Sigma \setminus K$, then $V_K$ is not coarsely bounded.
\end{fact}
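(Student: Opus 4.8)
The plan is to locate inside $V_K$ elements of infinite order whose iterates cannot be absorbed into any set of the form $(FV)^k$, and to detect this via subsurface projection to $S$. Any mapping class supported on $S$ (the identity outside $S$) lies in $V_K$, since $S$ is disjoint from $K$. As $S$ has finite type, $\mathrm{MCG}(S)$ contains infinite-order elements acting loxodromically on the arc-and-curve graph $\mathcal{AC}(S)$: a pseudo-Anosov when $S$ has positive complexity, a Dehn twist when $S$ is an annulus; the case where the nondisplaceable piece is a pair of pants or is disconnected is dealt with by a minor modification (passing to a nondisplaceable multicurve, or to a single component). Fixing such a $\phi \in V_K$ and a curve $c_0$ filling $S$ (the core curve when $S$ is an annulus), we have $d_{\mathcal{AC}(S)}(c_0, \phi^n c_0) \to \infty$.

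Let $\mathcal{O} = \mathrm{MCG}(\Sigma)\cdot c_0$ and $\mathcal{W} = \mathrm{MCG}(\Sigma)\cdot S$, and recall that for a finite-type essential subsurface $W \subseteq \Sigma$ the subsurface projection $\pi_W \colon \mathcal{C}(\Sigma) \to \mathcal{AC}(W)$ is a coarsely well-defined, coarsely Lipschitz partial map, defined on those curves that cannot be isotoped off $W$; this works verbatim in the infinite-type setting because $W$ is finite type (cf.\ \cite{afp, dfv}). The first use of nondisplaceability is the observation that every element of $\mathcal{O}$ essentially meets every element of $\mathcal{W}$: if $f(c_0)$ could be isotoped off $W = g(S)$, then since $c_0$ fills $S$ the subsurface $f(S)$ could be isotoped off $g(S)$, i.e.\ $g^{-1}f(S)$ could be isotoped off $S$, contradicting nondisplaceability of $S$. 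Hence $\pi_W(\gamma)$ is defined for all $\gamma \in \mathcal{O}$ and all $W \in \mathcal{W}$.

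Now set
\[
\rho(g,h) \;=\; \sup_{W \in \mathcal{W}}\; d_{\mathcal{AC}(W)}\!\left(\pi_W(g c_0),\, \pi_W(h c_0)\right),
\]
which (choosing one point in each projection set) is well defined up to a universal additive error. Since $x^{-1}$ permutes $\mathcal{W}$ and subsurface projection is natural, $\rho$ is a left-invariant pseudometric on $\mathrm{MCG}(\Sigma)$. Let $L$ be a finite-type subsurface containing $c_0$; then every $v \in V_L$ fixes $c_0$, so $\rho(v,\mathrm{id})$ is bounded by a universal constant $C_0$, and $V = \{g : \rho(g,\mathrm{id}) < C_0 + 1\}$ is an identity neighbourhood. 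On the other hand, taking $W = S$ in the supremum and using that $\phi$ is supported on $S$ and $c_0 \subseteq S$, we get $\rho(\phi^n, \mathrm{id}) \geq d_{\mathcal{AC}(S)}(c_0, \phi^n c_0) \to \infty$. If $V_K$ were coarsely bounded, Fact \ref{cb_nbhd} would give $V_K \subseteq (FV)^k$ for some finite $F$ and some $k$; by left-invariance and the triangle inequality, $\rho(\,\cdot\,,\mathrm{id})$ is then bounded on $(FV)^k$ by roughly $k\big(C_0 + 1 + \max_{f\in F}\rho(f,\mathrm{id})\big)$, contradicting $\phi^n \in V_K$ and $\rho(\phi^n,\mathrm{id}) \to \infty$. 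Hence $V_K$ is not coarsely bounded.

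The step I expect to be the real obstacle is showing that $\rho$ is finite-valued, equivalently that for fixed $\alpha,\beta \in \mathcal{O}$ only a bounded total amount of projection distance accumulates as $W$ ranges over the single orbit $\mathcal{W}$. This is precisely where nondisplaceability carries its weight: all members of $\mathcal{W}$ pairwise overlap and all essentially meet the fixed finite-type pieces carrying $\alpha$ and $\beta$, so the amount of $\alpha$-versus-$\beta$ data any such $W$ can record is controlled, in parallel with the finitely-many-large-projections phenomenon of the finite-type theory. Establishing this bound (and dispatching the degenerate shapes of $S$) is the technical core, and follows the template of Mann and Rafi's analysis of nondisplaceable subsurfaces \cite{mr}; granting it, the argument above is bookkeeping.
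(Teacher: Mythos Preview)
The paper does not prove this statement at all: it is quoted as Lemma~5.2 of Mann--Rafi \cite{mr} and used as a black box. Your approach---building a left-invariant pseudometric out of subsurface projections to the $\MCG(\Sigma)$-orbit of $S$ and then exhibiting an element of $V_K$ with unbounded orbit---is essentially the argument Mann and Rafi give, so there is nothing to compare against in this paper.

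That said, two remarks on your write-up. First, the step you flag as ``the real obstacle'' is much softer than you suggest and needs no analogue of bounded geodesic image. For fixed simple closed (multi)curves $\alpha,\beta$ and any subsurface $W$ hitting both, the components of $\alpha\cap W$ and $\beta\cap W$ are arcs with pairwise intersection at most $i(\alpha,\beta)$, so $d_{\mathcal{AC}(W)}(\pi_W\alpha,\pi_W\beta)$ is bounded by a fixed function of $i(\alpha,\beta)$; for annular $W$ one checks directly in the annular cover that distinct intersections of chosen lifts $\tilde\alpha,\tilde\beta$ project to distinct points of $\alpha\cap\beta$, giving the same bound. Hence $\rho(g,h)$ is finite, bounded in terms of $i(gc_0,hc_0)$, and the rest of your argument goes through.

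Second, your handling of the degenerate shapes of $S$ is where the real care is needed, and your suggested fixes are not quite right. ``Passing to a single component'' of a disconnected nondisplaceable $S$ does \emph{not} yield a nondisplaceable subsurface in general. What does work is \emph{enlarging}: any connected finite-type $S'\subseteq\Sigma\setminus K$ containing $S$ is again nondisplaceable (since $f(S')\cap S'\supseteq f(S)\cap S\neq\emptyset$), so when the components of $S$ lie in a single component of $\Sigma\setminus K$ you can replace $S$ by a connected surface of positive complexity and run your argument verbatim; this also disposes of the pair-of-pants case. When the components of $S$ lie in distinct components of $\Sigma\setminus K$ you cannot do this, and one must instead project to the components of the translates of $S$ simultaneously (using the disconnected form of nondisplaceability to guarantee some component always sees the marking), which is exactly the extra bookkeeping Mann and Rafi carry out.
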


\begin{cor}\label{nondisp_nograph}
	Let $\Sigma$ be a surface. If for every curve $\alpha$ on $\Sigma$, $\Sigma \setminus \alpha$ contains a finite-type nondisplaceable surface, then there is no graph $\Gamma$ whose vertices are curves on $\Sigma$ such that the orbit map $\MCG(\Sigma) \to \Gamma$ is a quasi-isometry.
\end{cor}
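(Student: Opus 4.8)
The plan is to argue by contradiction, playing the quoted Lemma 5.2 of \cite{mr} (a finite-type nondisplaceable subsurface in the complement of $K$ forces $V_K$ to be non-coarsely-bounded) against the observation that, whenever $\MCG(\Sigma)$ is quasi-isometric to a graph of curves, curve stabilizers must be coarsely bounded. This is essentially Corollary \ref{stabilizer} read in reverse: there we used a translation to show the stabilizer is coarsely bounded, whereas here we deduce coarse boundedness directly from the assumed quasi-isometry and then derive a contradiction.

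Concretely, I would suppose for contradiction that $\Gamma$ is a graph whose vertices are curves on $\Sigma$ and that, with respect to some vertex $\alpha$, the orbit map $\phi \colon \MCG(\Sigma) \to \Gamma$, $f \mapsto f(\alpha)$, is a quasi-isometry. The first step is the standard fact that a quasi-isometry pulls coarsely bounded sets back to coarsely bounded sets; applied to the single vertex $\{\alpha\}$, this shows that $\phi^{-1}(\{\alpha\})$ — which is exactly the stabilizer $\Stab(\alpha) = \set{f \in \MCG(\Sigma) \mid f(\alpha) \text{ is homotopic to } \alpha}$ — is coarsely bounded.

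Next I would invoke the hypothesis: $\Sigma \setminus \alpha$ contains a finite-type nondisplaceable subsurface $S$. Since $S$ is disjoint from $\alpha$ up to isotopy, after an isotopy I may assume $S$ is disjoint from a fixed closed annular neighborhood $K$ of $\alpha$; this $K$ is of finite type and contains $\alpha$. Every $f \in V_K$ restricts to the identity on $K \supseteq \alpha$, so $V_K \subseteq \Stab(\alpha)$, and hence $V_K$ is coarsely bounded by the previous step. On the other hand, $S$ is a finite-type nondisplaceable subsurface contained in $\Sigma \setminus K$, so by Lemma 5.2 of \cite{mr} the set $V_K$ is \emph{not} coarsely bounded — a contradiction, completing the proof.

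I do not expect a genuine obstacle here; the argument is immediate once the two quoted facts are in place. The only points needing a word of care are: (i) the fact that a quasi-isometry pulls bounded sets back to coarsely bounded sets, which for a topological group is part of the definition of quasi-isometry relative to its left-coarse structure (and, when $\MCG(\Sigma)$ is locally coarsely bounded with a coarsely bounded generating set, merely says that balls in the resulting word metric are coarsely bounded); and (ii) producing a finite-type subsurface that contains $\alpha$ and is disjoint from the nondisplaceable subsurface $S$, for which a sufficiently thin annular neighborhood of $\alpha$ works after isotoping $S$ off $\alpha$.
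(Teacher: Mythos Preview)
Your proof is correct and follows the same approach as the paper's. The paper's own proof is terser---it records only that a quasi-isometry forces curve stabilizers to be coarsely bounded and leaves the reader to combine this with the preceding Fact (Lemma~5.2 of \cite{mr})---whereas you spell out explicitly the intermediate step of passing to an annular neighborhood $K$ of $\alpha$ so that $V_K \subseteq \Stab(\alpha)$ and the Fact applies.
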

\begin{proof}
	For the orbit map to be a quasi-isometry, the preimage of every bounded set in $\Gamma$ must be coarsely bounded in $\MCG(\Sigma)$. In particular, the stabilizer of a curve is the preimage of a single vertex, so it must be coarsely bounded.
\end{proof}

Mann and Rafi give three basic examples (Examples 2.4 and 2.5 of \cite{mr}) of nondisplaceable surfaces, all of which we will use:
\begin{enumerate}
	\item If $\Sigma$ has finite positive genus, then any subsurface of $\Sigma$ with the same genus as $\Sigma$ is nondisplaceable.
	\item If $X$ is a $\MCG(\Sigma)$-invariant, finite set of ends of $\Sigma$ of cardinality at least $3$, then any surface that separates the elements of $X$ into different complementary components is nondisplaceable.
	\item If $X$ and $Y$ are disjoint, closed $\MCG(\Sigma)$-invariant sets of ends of $\Sigma$ with $X$ homeomorphic to a Cantor set, then a subsurface homeomorphic to a pair of pants containing elements of $X$ in two complementary components, and all of $Y$ in the third, is nondisplaceable.
\end{enumerate}

The easiest place to apply Corollary \ref{nondisp_nograph} is in the case of finite-genus surfaces:

\begin{lemma}\label{finite_genus}
	If $\Sigma$ has finite positive genus, then for any graph $\Gamma$ whose vertices are curves on $\Sigma$, the orbit map $\MCG(\Sigma) \to \Gamma$ is not a quasi-isometry.
\end{lemma}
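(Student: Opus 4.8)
The plan is to reduce to Corollary \ref{nondisp_nograph}: it is enough to show that for \emph{every} simple closed curve $\alpha$ on $\Sigma$ the complement $\Sigma \setminus \alpha$ contains a finite-type nondisplaceable subsurface, since this rules out any graph of curves being quasi-isometric to $\MCG(\Sigma)$. Throughout let $g < \infty$ be the positive genus of $\Sigma$.

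The separating case is the easy one. If $\alpha$ separates $\Sigma$ into $\Sigma_+$ and $\Sigma_-$ of genera $g_+$ and $g_-$ with $g_+ + g_- = g$, then in each side $\Sigma_\eps$ with $g_\eps > 0$ I choose a connected finite-type subsurface $S_\eps \subseteq \Sigma_\eps$, disjoint from $\alpha$, with a single boundary curve, genus $g_\eps$, and planar complement in $\Sigma_\eps$; let $S$ be the disjoint union of these. Then $S \subseteq \Sigma \setminus \alpha$ is finite-type of total genus $g$, and $\Sigma \setminus S$ is planar. For any $f \in \MCG(\Sigma)$ and any component $S_\eps$ of $S$, the image $f(S_\eps)$ has positive genus and hence cannot be contained in the planar surface $\Sigma \setminus S$; so $f(S_\eps)$ meets some component of $S$, and $S$ is nondisplaceable. (When a side is planar, $S$ is connected of genus $g$ and this is just the first of the three examples of Mann and Rafi.)

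The non-separating case is the real content. Here $\Sigma \setminus \alpha$ is connected of genus $g - 1$. When $g \ge 2$ I would pass to the surface $\widehat\Sigma$ obtained by cutting $\Sigma$ along $\alpha$ and capping the two resulting boundary circles with once-punctured disks, creating new punctures $p_1, p_2$; note $\widehat\Sigma$ has genus $g - 1 \ge 1$, and cutting and capping defines a continuous surjection from $\Stab(\alpha)$ onto the subgroup $G'$ of $\MCG(\widehat\Sigma)$ stabilizing $\{p_1, p_2\}$. If the orbit map $\MCG(\Sigma) \to \Gamma$ were a quasi-isometry then $\Stab(\alpha)$, being the preimage of a vertex, would be coarsely bounded, hence so would its continuous image $G'$; but $\widehat\Sigma$ contains a genus-$(g-1)$ subsurface disjoint from $p_1$ and $p_2$ whose complement is planar, and this subsurface is nondisplaceable, so by the argument of Lemma 5.2 of \cite{mr} the group $G'$ is not coarsely bounded --- a contradiction. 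The delicate remaining case is $g = 1$ with $\alpha$ non-separating, where $\widehat\Sigma$ is planar and there is no genus to exploit; here one must use the standing hypothesis that $\MCG(\Sigma)$ admits a coarsely bounded generating set, which by the structure theory behind Lemma \ref{pred_exist} and the exclusion of limit type (Fact \ref{limit_type}) forces $E(\Sigma)$ to carry either an $\MCG(\Sigma)$-invariant set of at least three ends or an invariant Cantor set of maximal ends together with a disjoint invariant closed set of ends; in either case the second or third example of Mann and Rafi produces a nondisplaceable pair of pants that can be isotoped off the single handle, hence off $\alpha$. I expect this $g = 1$ case to be the main obstacle.
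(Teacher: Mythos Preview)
Your separating case is fine and parallels the paper. The nonseparating case, however, is where you diverge from the paper and run into trouble.

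The paper never leaves the framework of Corollary~\ref{nondisp_nograph}. It fixes a single connected finite-type $S\subseteq\Sigma$ of full genus $g$, so that $\Sigma\setminus S$ is planar, and then for \emph{every} curve $\alpha$ exhibits a nondisplaceable piece of $S\setminus\alpha$. When $\alpha$ is nonseparating in $S$, the paper simply takes the connected surface $S\setminus\alpha$ itself. The justification (left implicit in the paper) is short and uniform in $g$: the complement $\Sigma\setminus(S\setminus\alpha)$ is the disjoint union of the planar surface $\Sigma\setminus S$ and the annulus $N(\alpha)$, so if some $f$ carried $S\setminus\alpha$ off itself, then symmetrically $S\setminus\alpha$---and with it the adjacent annulus $N(\alpha)$, hence all of $S$---would lie in a single component of $f(\Sigma\setminus S)\sqcup f(N(\alpha))$; but every such component is planar, contradicting that $S$ has genus $g\ge 1$. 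No capping, no end-space analysis, and no split between $g=1$ and $g\ge 2$ is needed.

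Your $g=1$ sketch, by contrast, has a genuine gap. Consider a genus-$1$ surface whose end space is a Cantor set of pairwise equivalent planar ends. The only $\MCG(\Sigma)$-invariant subsets of $E$ are $\emptyset$ and $E$, so neither Example~2 nor Example~3 of Mann--Rafi produces a nondisplaceable pair of pants, and your proposed mechanism yields nothing in $\Sigma\setminus\alpha$. (You are also invoking a ``standing hypothesis'' of a coarsely bounded generating set that is not part of the statement of Lemma~\ref{finite_genus}.) The paper's $S\setminus\alpha$ in this situation is a pair of pants two of whose boundary circles cobound an annulus with nonseparating core, and the genus argument above shows it cannot be displaced. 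Your capping route for $g\ge 2$ may be salvageable, but it relies on coarse boundedness of $\Stab(\alpha)$ in $\MCG(\Sigma)$ passing to coarse boundedness of its image under a homomorphism defined only on that subgroup, which at minimum needs justification; in any case it is far more elaborate than the direct observation that $S\setminus\alpha$ is already nondisplaceable.
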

\begin{proof}
	Fix a curve $\alpha$, and let $S$ be a connected, finite-type subsurface of $\Sigma$ containing $\alpha$ and with the same genus as $\Sigma$. If $\alpha$ is nonseparating in $S$, $S \setminus \alpha$ is still connected and nondisplaceable. If $\alpha$ separates $S$ into two components, one of which has the same genus as $\Sigma$, then that component is connected and nondisplaceable.

	Finally, if $\alpha$ separates $S$ into two components, both of which have positive genus, consider the surface $S \setminus \alpha$. For any $f \in \MCG(\Sigma)$, both components of $f(S \setminus \alpha)$ contain nonseparating curves, and every nonseparating curve on $\Sigma$ intersects $S \setminus \alpha$. Therefore both components of $f(S \setminus \alpha)$ intersect $S \setminus \alpha$, making it a nondisplaceable surface. The result follows by Corollary \ref{nondisp_nograph}.
\end{proof}

Next consider the case where $\Sigma$ has at least three---but finitely many---maximal ends.

\begin{lemma}\label{finite_ends}
	If $\Sigma$ has at least $3$ but finitely many maximal ends, then for any graph $\Gamma$ whose vertices are curves on $\Sigma$, the orbit map $\MCG(\Sigma) \to \Gamma$ is not a quasi-isometry.
\end{lemma}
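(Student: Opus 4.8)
The plan is to invoke Corollary \ref{nondisp_nograph}: it suffices to show that for every curve $\alpha$ on $\Sigma$ the complement $\Sigma \setminus \alpha$ contains a finite-type nondisplaceable subsurface. The $\MCG(\Sigma)$-invariant set of ends I will use is $X = \mc{M}(\Sigma)$, the set of maximal ends: every homeomorphism of $\Sigma$ induces a homeomorphism of $E$ respecting the preorder $\preccurlyeq$, so $X$ is $\MCG(\Sigma)$-invariant, and by hypothesis it is finite with $\abs{X} \ge 3$. For each curve $\alpha$ I will build a finite-type subsurface disjoint from $\alpha$ that separates the ends of $X$ into distinct complementary components of $\Sigma$; by the second of the three examples of nondisplaceable surfaces recalled above (a surface separating a finite $\MCG(\Sigma)$-invariant set of at least three ends into distinct complementary components), such a subsurface is nondisplaceable.

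First I would handle the easy cases. If $\alpha$ is nonseparating, then $\Sigma \setminus \alpha$ is connected and contains arbitrarily small neighborhoods of every end of $X$; choosing for each of the finitely many ends of $X$ a small clopen neighborhood bounded by a curve missing $\alpha$, connecting these curves by disjoint arcs in $\Sigma \setminus \alpha$, and taking a regular neighborhood of the resulting graph produces a finite-type $S \subseteq \Sigma \setminus \alpha$ with each end of $X$ in its own complementary component. If instead $\alpha$ separates $\Sigma$ into $\Sigma_1 \sqcup \Sigma_2$, write $X_i = X \cap E(\Sigma_i)$, and suppose $\abs{X_2} \le 1$; then $\abs{X_1} = \abs{X} - \abs{X_2} \ge 2$, and running the same construction inside $\Sigma_1$ with one extra boundary curve parallel to $\alpha$ gives a finite-type $S \subseteq \Sigma_1 \subseteq \Sigma \setminus \alpha$ whose complementary components are a neighborhood of each end of $X_1$ together with one component containing all of $\Sigma_2$, hence the at most one end of $X_2$. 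In either case $S$ separates all of $X$ and so is nondisplaceable.

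The main obstacle is the remaining case: $\alpha$ separates $\Sigma$ with $\abs{X_1} \ge 2$ and $\abs{X_2} \ge 2$ (so $\abs{X} \ge 4$), where no connected subsurface contained in one side can separate all of $X$. Here I would use a disconnected subsurface $S = S_1 \sqcup S_2$, with $S_i \subseteq \Sigma_i$ a finite-type subsurface such that each end of $X_i$ lies in its own complementary component in $\Sigma_i$ while the component containing a collar of $\alpha$ contains no end of $X_i$. Then the complementary components of $S$ in $\Sigma$ are a neighborhood of each end of $X$ together with a ``middle'' component (the two $\alpha$-collars joined across $\alpha$, plus any leftover components) containing no end of $X$, so $S$ separates $X$ into distinct complementary components. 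To see $S$ is nondisplaceable, suppose $f \in \MCG(\Sigma)$ with $f(S_1) \cap S = \emptyset$. The connected set $f(S_1)$ then lies in a single complementary component $D$ of $S$, and $D$ meets $X$ in at most one point, so at least $\abs{X} - 1 \ge 2$ ends of $X$ lie in the connected set $\Sigma \setminus D$, which is disjoint from $f(S_1)$; thus these ends all lie in one complementary component of $f(S_1)$. But $S_1 \subseteq \Sigma_1$ separates each end of $X_1$ from everything else while leaving all of $X_2$ in a single complementary component, so $f(S_1)$ separates two distinct ends $y, y' \in X$ unless $f^{-1}(y), f^{-1}(y') \in X_2$, i.e.\ unless $y, y' \in f(X_2)$. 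Hence that subset of $X$, of size at least $\abs{X} - 1$, lies inside $f(X_2)$, forcing $\abs{X_2} \ge \abs{X} - 1$, i.e.\ $\abs{X_1} \le 1$, which contradicts $\abs{X_1} \ge 2$. The symmetric argument (using $\abs{X_2} \ge 2$) rules out $f(S_2) \cap S = \emptyset$, so $S$ is nondisplaceable.

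Combining the three cases, every $\Sigma \setminus \alpha$ contains a finite-type nondisplaceable subsurface, so Corollary \ref{nondisp_nograph} gives the lemma. The only genuinely new ingredient beyond the recycled Mann and Rafi machinery is the counting argument in the balanced-separating case; I would state but not dwell on the standard construction of a finite-type subsurface with prescribed complementary ends. The point worth stating carefully is that the nondisplaceability criterion being applied requires separating the \emph{entire} invariant set $X$ into distinct complementary components — this is exactly why a disconnected $S$ is forced in the last case, and why nondisplaceability is verified there by hand rather than by quoting the criterion for a possibly disconnected surface.
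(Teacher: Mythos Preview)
Your overall strategy matches the paper's---fix $X = \mc{M}(\Sigma)$, produce for each curve $\alpha$ a nondisplaceable finite-type subsurface in $\Sigma \setminus \alpha$, and invoke Corollary \ref{nondisp_nograph}---but the counting argument in your balanced case has a gap. The claim that $\Sigma \setminus D$ is connected fails exactly when $D$ is the middle component (the one containing $\alpha$): removing it disconnects the $S_1$-side from the $S_2$-side, so $X_1$ and $X_2$ may a priori lie in \emph{different} complementary components of $f(S_1)$, and you cannot conclude that all of $X \setminus (X \cap D)$ lies in a single one. The fix is short: each of the two pieces of $\Sigma \setminus D$ is connected and disjoint from $f(S_1)$, hence lies in one complementary component of $f(S_1)$; since $|X_1|, |X_2| \ge 2$, neither can land in a singleton component, so both land in the unique component containing $f(X_2)$, forcing $X \subseteq f(X_2)$ and the same contradiction as before.

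The paper handles the balanced case differently and avoids this issue. It fixes one connected $S$ separating $X$ in advance and, for each $\alpha$, uses $S \setminus \alpha$; in the balanced case its two components share $\alpha$ as a common boundary curve. Nondisplaceability is then argued via $f(\alpha)$: since $f(\alpha)$ also separates $X$ with at least two maximal ends on each side, it cannot sit in any complementary component of $S$ (each contains at most one end of $X$), so $f(\alpha)$ either equals $\alpha$ or crosses the interior of $S \setminus \alpha$; in either event both components of $f(S \setminus \alpha)$ meet $S \setminus \alpha$ locally near $f(\alpha)$. No end-counting is needed.
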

\begin{proof}
	Fix a curve $\alpha$, and let $S$ be a finite-type surface containing $\alpha$ and separating the maximal ends of $\Sigma$ into distinct complementary components. If $\alpha$ is nonseparating in $S$, then $S \setminus \alpha$ is still connected and nondisplaceable. If $\alpha$ separates $S$ into two components, one of which still separates the maximal ends of $\Sigma$ into distinct complementary components, then that component is connected and nondisplaceable.

	Otherwise, there are at least two maximal ends of $\Sigma$ in both components of $\Sigma \setminus \alpha$. Fix $f \in \MCG(\Sigma)$. Since there are at least two maximal ends of $\Sigma$ in both components of $\Sigma \setminus f(\alpha)$, either $f(\alpha) = \alpha$ or $f(\alpha)$ intersects $S \setminus \alpha$. Since $f(\alpha)$ is a boundary component of both components of $S \setminus \alpha$, it follows that $S \setminus \alpha$ is nondisplaceable. The result follows by Corollary \ref{nondisp_nograph}.
\end{proof}

Now we move to the case of infinitely many maximal ends:

\begin{lemma}\label{inf_ends}
	If $\Sigma$ has infinitely many maximal ends, not all equivalent, then for any graph $\Gamma$ whose vertices are curves on $\Sigma$, the orbit map $\MCG(\Sigma) \to \Gamma$ is not a quasi-isometry.
\end{lemma}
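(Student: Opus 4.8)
The plan is to invoke Corollary \ref{nondisp_nograph}: it suffices to show that for \emph{every} curve $\alpha$ on $\Sigma$, the complement $\Sigma \setminus \alpha$ contains a finite-type nondisplaceable subsurface. The nondisplaceable surfaces I will produce are all pairs of pants of the third type on Mann and Rafi's list of examples: a pair of pants with two of its complementary components each containing points of an $\MCG(\Sigma)$-invariant Cantor set of ends $X$, and the third component containing all of a disjoint, nonempty, closed, $\MCG(\Sigma)$-invariant set $Y$.

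The first step is to manufacture the invariant sets $X$ and $Y$. Since $\Sigma$ has infinitely many maximal ends, Fact \ref{max_exist} forces at least one equivalence class of maximal ends to be infinite, hence a Cantor set. Let $\mathcal{M}_\infty$ be the union of all Cantor-set equivalence classes of maximal ends: this set is $\MCG(\Sigma)$-invariant, and since each such class has no isolated points it is dense in itself, so its closure $X := \overline{\mathcal{M}_\infty}$ is a nonempty, closed, $\MCG(\Sigma)$-invariant Cantor set. (When $\Sigma$ is tame one gets this more directly: Lemma \ref{finite_max} applied with $T = \Sigma$ shows there are only finitely many equivalence classes of maximal ends, so $\mathcal{M}_\infty$ is already a finite union of Cantor sets and hence closed.) For $Y$ one uses that the maximal ends are ``not all equivalent'': the union of the finite equivalence classes of maximal ends is invariant, and if it is nonempty and disjoint from $X$ it serves as $Y$; otherwise every maximal end lies in a Cantor class, there are at least two such classes, and one argues that some Cantor class is itself invariant (and a second one provides $Y$), handling the fully symmetric configuration separately. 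This is the delicate point of the argument, since individual equivalence classes of maximal ends need not be $\MCG(\Sigma)$-invariant.

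Now fix a curve $\alpha$, and recall the elementary observation that a nonempty clopen subset of a Cantor set is again a Cantor set. If $\alpha$ is separating, write the components of $\Sigma \setminus \alpha$ as $\alpha_1 \sqcup \alpha_2$; then $X$ is partitioned by the clopen sets $E(\alpha_1)$ and $E(\alpha_2)$, so at least one of $X \cap E(\alpha_1)$, $X \cap E(\alpha_2)$ is a nonempty clopen subset of $X$, hence a Cantor set, say $X \cap E(\alpha_i)$. Since $X \cap E(\alpha_i)$ and $Y \cap E(\alpha_i)$ are disjoint compact subsets of $E$, I can choose two disjoint clopen (again Cantor) subsets of $X \cap E(\alpha_i)$ that avoid $Y$, and then a pair of pants $P \subseteq \alpha_i$ with boundary curves $\gamma_1,\gamma_2,\gamma_3$ so that the three components of $\Sigma \setminus P$ are: a region behind $\gamma_1$ containing the first Cantor subset, a region behind $\gamma_2$ containing the second, and a region behind $\gamma_3$ equal to everything else --- in particular this last region contains $\alpha_{3-i}$ together with the part of $\alpha_i$ that was avoided, hence all of $Y$. (All genus can be kept inside the complementary components, never in $P$ itself.) By the third of Mann and Rafi's examples, $P$ is nondisplaceable, and $P \subseteq \alpha_i \subseteq \Sigma \setminus \alpha$. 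If $\alpha$ is non-separating, then $\Sigma \setminus \alpha$ is connected and contains a neighborhood of every end of $\Sigma$, so the same construction can be performed directly inside $\Sigma \setminus \alpha$. In either case $\Sigma \setminus \alpha$ contains a finite-type nondisplaceable subsurface, and Corollary \ref{nondisp_nograph} completes the proof.

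I expect the main obstacle to be precisely the extraction of genuinely $\MCG(\Sigma)$-invariant sets $X$ and $Y$ from the hypothesis: the Cantor set $X$ comes out cleanly, but producing an invariant $Y$ disjoint from $X$ requires ruling out (or separately treating) the degenerate configuration in which all maximal ends lie in Cantor classes that $\MCG(\Sigma)$ permutes transitively. Tameness, when available, streamlines this substantially via Lemma \ref{finite_max}. The subsequent pair-of-pants construction is routine once one is careful with genus bookkeeping.
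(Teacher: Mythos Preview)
Your argument has a genuine gap and a misconception that together derail the proof.

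First, the inference ``since $\Sigma$ has infinitely many maximal ends, Fact~\ref{max_exist} forces at least one equivalence class of maximal ends to be infinite'' is false. Fact~\ref{max_exist} only says that each equivalence class is finite or a Cantor set; nothing prevents there being infinitely many \emph{finite} classes and no Cantor class at all. In that situation your set $\mathcal{M}_\infty$ is empty and the whole construction of $X$ collapses. You note that tameness would rescue this via Lemma~\ref{finite_max}, but the lemma as stated carries no tameness hypothesis, and the paper's proof does not use one.

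Second, your ``delicate point'' is not delicate: every equivalence class $E(x)$ \emph{is} $\MCG(\Sigma)$-invariant. The preorder $\preccurlyeq$ is defined entirely in terms of the topology of the pair $(E,E_G)$, and mapping classes act on $(E,E_G)$ by homeomorphisms, so $f(x)\sim x$ for every $f\in\MCG(\Sigma)$ and every end $x$. Once you know this, there is no need for closures, unions, or handling ``fully symmetric configurations''.

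These two observations are exactly what drive the paper's much shorter argument: split into two cases. If every maximal class is finite, there are at least three inequivalent maximal ends; the union of their (finite, invariant) classes is a finite invariant set of size $\ge 3$, and a finite-type surface separating them is nondisplaceable by the \emph{second} Mann--Rafi example. If some maximal class $E(x)$ is a Cantor set, pick any inequivalent maximal end $z$; then $E(x)$ and $E(z)$ are disjoint closed invariant sets, and a pair of pants with pieces of $E(x)$ in two complementary components and all of $E(z)$ in the third is nondisplaceable by the third example. In either case one checks, as you do, that removing a single curve $\alpha$ leaves a component of $S$ still separating the three sets, so the nondisplaceable surface survives in $\Sigma\setminus\alpha$ and Corollary~\ref{nondisp_nograph} applies.
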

\begin{proof}
	Fix a curve $\alpha$. By Fact \ref{max_exist} the equivalence class of every maximal end is either finite or a Cantor set. If every such equivalence class is finite then there are infinitely many of them; in particular let $x, y, z$ be three nonequivalent maximal ends with $E(x)$, $E(y)$, and $E(z)$ all finite. Then let $X = E(x)$, $Y = E(y)$, and $Z = E(z)$. If on the other hand there is some maximal end $x$ such that $E(x)$ is a Cantor set, pick a maximal end $z$ not equivalent to $x$, and let $X \sqcup Y$ be nonempty sets partitioning $E(x)$, and let $Z = E(z)$.

	In either case above, let $S$ be a finite-type surface containing $\alpha$ and with $X$, $Y$, and $Z$ in distinct complementary components. One component of $S \setminus \alpha$ still has $X$, $Y$, and $Z$ in distinct complementary components, so this component is connected and nondisplaceable. The result follows by Corollary \ref{nondisp_nograph}.
\end{proof}

There is only one more case, which requires a bit more subtlety as well as the condition of tameness:

\begin{lemma}\label{two_diff_ends}
	If $\Sigma$ has tame end space and two non-equivalent maximal ends $e_+$ and $e_-$, and if $\MCG(\Sigma)$ has a well-defined quasi-isometry type, then for any graph $\Gamma$ whose vertices are curves on $\Sigma$, the orbit map $\MCG(\Sigma) \to \Gamma$ is not a quasi-isometry.
\end{lemma}
\begin{proof}
	Since $\MCG(\Sigma)$ has a well-defined quasi-isometry type, we can apply Lemma \ref{pred_exist} to find immediate predecessors to $e_+$ and $e_-$. If $e_+$ and $e_-$ had the same predecessors, $\Sigma$ would be translatable by Lemma \ref{almost_two_ends}, which would imply $e_+ \sim e_-$, a contradiction. Thus without loss of generality we may assume there is some immediate predecessor $x$ of $e_+$ such that $x \not\preccurlyeq e_-$.

	Let $V$ be a stable neighborhood of $x$. Since $x$ is maximal in $V$, $E(x) \cap V$ is either a singleton or a Cantor set. We claim it is in fact a Cantor set. Suppose by contradiction that $E(x) \cap V$ is discrete; since $x$ is an immediate predecessor of $e_+$ and $x \not\preccurlyeq e_-$, this means that $E(x)$ is countable, with a unique accumulation point at $e_+$. Find a subsurface $K$ as in Fact \ref{cb_class}, with complementary components $A_+$ and $A_-$ containing $e_+$ and $e_-$ respectively. Note that all but finitely many elements of $E(x)$ are in the end set of $A_+$. Let $B$ be a subsurface of $A_+$ containing $e_+$, and such that the end space of $A_+ \setminus V$ contains a single element of $E(x)$. Then there is some $f \in \MCG(\Sigma)$ such that $A_+ \subseteq f(B)$. But $\Sigma \setminus A_+$ and $\Sigma \setminus B$ have a different number of elements of $E(x)$. This contradiction proves our claim.

	Since $E(x) \cap V$ is a Cantor set, $x$ is an immediate predecessor of $e_+$, and $x \not\preccurlyeq e_-$, $E(x)$ must be a countable sequence of disjoint Cantor sets converging to $e_+$. Let $X \sqcup Y$ be a partition of $E(x) \cup \set{e_+}$ into nonempty clopen sets, and let $Z = \set{e_-}$. Then a finite-type surface $S$ that has $X$, $Y$, and $Z$ in distinct complementary components will be nondisplaceable. As in the proof of Lemma \ref{inf_ends}, we can construct such an $S$ so that it avoids a fixed curve $\alpha$, and so the result follows by Corollary \ref{nondisp_nograph}.
\end{proof}

These lemmas together give the main result of this subsection:

\begin{prop}\label{no_luck}
	If $\Sigma$ has tame end space and $\MCG(\Sigma)$ has a well-defined quasi-isometry type, and if $\Sigma$ has either finite positive genus, two or infinitely many maximal ends that are not all equivalent, or at least three but finitely many maximal ends, then for any graph $\Gamma$ whose vertices are curves on $\Sigma$, the orbit map $\MCG(\Sigma) \to \Gamma$ is not a quasi-isometry.
\end{prop}
\begin{proof}
	If $\Sigma$ has finite positive genus, this is Lemma \ref{finite_genus}. If it has two maximal ends, this is Lemma \ref{two_diff_ends}. If it has at least three but finitely many maximal ends, this is Lemma \ref{finite_ends}. If it has infinitely many maximal ends, this is Lemma \ref{inf_ends}.
\end{proof}

\section{The plane minus a Cantor set}\label{cantor_sec}

We now turn from the general case of translatable surfaces, of which there are uncountably many examples only a few of which have received specific notice, to a much more specific but more well-studied case. In this section we focus exclusively on the surface $\Sigma = \rea^2 \setminus C$, where $C$ is a Cantor set embedded in the plane. In this instance we will not have to go looking for a suitable graph, as one has been provided for us in the form of the \emph{loop graph} defined by Bavard \cite{bavard}. We will show in this section that the mapping class group of this surface is quasi-isometric to its loop graph.

Note that the surface $\Sigma$ has a unique isolated end, usually called $\infty$ because it is the ``point at infinity'' of $\rea^2$.

\begin{defn}
	A \emph{loop} in $\Sigma$ is an embedded line in $\Sigma$ with both ends approaching $\infty$, considered up to isotopy and orientation reversal. The \emph{loop graph} $\loopg(\Sigma)$ of $\Sigma$ is the graph whose vertices are loops in $\Sigma$, with two loops connected by an edge if they have disjoint representatives.
\end{defn}

It was shown by Bavard \cite{bavard} that the loop graph\footnote{Many papers deal interchangeably with the loop graph as defined here and the \emph{ray graph}, whose vertices are embedded lines with one end at $\infty$ and the other in the Cantor set. These are shown by Bavard \cite{bavard} to be quasi-isometric.} is connected and Gromov-hyperbolic. A subsequent paper of Bavard and Walker \cite{bw} characterized the Gromov boundary of $\loopg(\Sigma)$. The high degree of symmetry possessed by $\Sigma$ also makes the following transitivity lemma possible.

\begin{lemma}\label{cantor_trans}
	If $\alpha$, $\beta$, and $\gamma$ are loops in $\Sigma$, with $\beta$ and $\gamma$ both in the same component of $\Sigma \setminus \alpha$, then there is a mapping class $f \in \MCG(\Sigma)$ such that $f(\alpha) = \alpha$, $f(\beta) = \gamma$, and $f$ restricts to the identity on the component of $\Sigma \setminus \alpha$ not containing $\beta$ and $\gamma$.
\end{lemma}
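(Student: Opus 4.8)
The plan is to reduce the statement to a change-of-coordinates argument inside the component of $\Sigma \setminus \alpha$ containing $\beta$ and $\gamma$. First I would fix a representative of $\alpha$ and let $\Sigma_\beta$ denote the component of $\Sigma \setminus \alpha$ containing (representatives of) both $\beta$ and $\gamma$; the other component, call it $\Sigma_{\bar\beta}$, we will leave untouched, so it suffices to produce a homeomorphism of $\Sigma_\beta$ that is the identity near the boundary curve $\alpha$ and carries $\beta$ to $\gamma$. Note that $\beta$ and $\gamma$ are loops, so they are lines with both ends escaping to $\infty$; since $\alpha$ is itself a loop based at $\infty$, the isolated end $\infty$ lies ``on'' the boundary of both components in the sense that a neighborhood of $\infty$ meets $\Sigma_\beta$, and $\beta$ and $\gamma$ each approach $\infty$ from within $\Sigma_\beta$. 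I would first arrange the representatives of $\beta$ and $\gamma$ to be disjoint from $\alpha$ except that their ends run alongside $\alpha$ out to $\infty$, or more cleanly, cut $\Sigma$ along $\alpha$ and work in the resulting surface-with-boundary whose boundary contains a copy of the line $\alpha$ together with (half of) a neighborhood of $\infty$.

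The key step is the following. A loop in a surface homeomorphic to $\Sigma$ with a unique non-planar-... — more precisely, since $C$ is a Cantor set, $\Sigma_\beta$ is itself homeomorphic to a surface obtained from the plane by removing a Cantor set and whose end structure is: one ``boundary'' line coming from $\alpha$, the isolated end $\infty$, and a Cantor set of ends. Both $\beta$ and $\gamma$ are properly embedded lines in $\Sigma_\beta$ with both ends at $\infty$. Cutting $\Sigma_\beta$ along $\beta$ (resp.\ $\gamma$) produces a disjoint union of two surfaces; the standard change-of-coordinates principle for surfaces (the classification of Kerékjártó and Richards, Fact cited above, together with the fact that any homeomorphism of the end-pair extends) then says that $\beta$ and $\gamma$ are in the same mapping class group orbit of the relevant subsurface provided the cut surfaces have matching genus and end spaces. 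Here genus is zero on everything in sight, so I only need to check that cutting along $\beta$ and cutting along $\gamma$ produce homeomorphic pairs of pieces with homeomorphic end spaces: each piece is planar, each contains a Cantor set of ends (or possibly a clopen piece of the Cantor set, but a clopen subset of a Cantor set is again a Cantor set), and the end $\infty$ is shared. Since a loop separates $\Sigma_\beta$ into two planar pieces each with a Cantor set of ends and each with a ``half'' of the isolated end $\infty$ on its frontier, the two cut-configurations are abstractly homeomorphic, and the homeomorphism can be taken to fix the boundary curve $\alpha$ pointwise (indeed to fix a collar of $\alpha$, since nothing interesting happens near $\alpha$). Extending by the identity on $\Sigma_{\bar\beta}$ gives the desired $f \in \MCG(\Sigma)$.

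The main obstacle I anticipate is bookkeeping around the isolated end $\infty$: because loops are based at $\infty$ rather than at an interior point or a curve, cutting along $\beta$ does not split $\infty$ into two ends in the naive way — one has to be careful about how the two ``sides'' of the cut near $\infty$ are distributed among the two complementary pieces, and about ensuring that the constructed homeomorphism respects orientation and the distinguished status of $\infty$ as an isolated end. I would handle this by passing to the ray graph picture (Bavard shows the ray and loop graphs are quasi-isometric, and more importantly a loop based at $\infty$ can be split at $\infty$ into two rays), or equivalently by puncturing $\infty$ into a boundary circle, so that $\beta$ becomes an honest arc between two boundary points and the classification of arcs up to homeomorphism is transparent. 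Once $\infty$ is opened into a boundary component, both pieces of $\Sigma_\beta \setminus \beta$ are manifestly planar surfaces with one boundary circle and a Cantor set of ends, hence homeomorphic to each other and to the corresponding pieces for $\gamma$, and the change-of-coordinates argument goes through verbatim. The remaining details — that the collar of $\alpha$ can be fixed and that the whole construction is orientation-preserving — are routine.
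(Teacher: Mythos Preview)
Your proposal is correct and follows essentially the same route as the paper: cut along the loops, observe that every complementary piece is planar with end space a nonempty clopen subset of the Cantor set (hence itself a Cantor set), and invoke the Ker\'ekj\'art\'o--Richards classification to build the desired homeomorphism piece by piece, fixing the $\alpha$-side. The paper's proof is terser---it cuts along $\alpha$ and $\beta$ simultaneously into three pieces and simply calls the cut loops ``boundary components'' without comment---whereas you are more explicit about the bookkeeping at $\infty$, proposing to blow $\infty$ up to a boundary circle so that loops become honest arcs; this is exactly the maneuver that makes the paper's informal language rigorous, and it is a reasonable thing to spell out.
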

\begin{proof}
	Observe that every loop on $\Sigma$ is separating. If we cut $\Sigma$ along $\alpha$ and $\beta$, we get three subsurfaces: one whose only boundary component is $\alpha$, one whose only boundary component is $\beta$, and one with both boundary components. The end space of each of these subsurfaces is a nonempty clopen subset of a Cantor set, which must be itself a Cantor set. Since the surface $\Sigma$ has no genus, this is a complete description of the subsurfaces. The same argument applies when cutting the surface along $\alpha$ and $\gamma$, so we can fix the surface bounded by $\alpha$, map the surface bounded by $\beta$ to that bounded by $\gamma$, and map the surface bounded by both $\alpha$ and $\beta$ to that bounded by $\alpha$ and $\gamma$.
\end{proof}

The following lemmas are analogs of Lemmas \ref{rotation} and \ref{shift_enough} in the setting of $\Sigma$:

\begin{lemma}\label{cantor_rot}
	Let $\alpha$ be a loop on $\Sigma$ and $\alpha_-$ and $\alpha_+$ the two components of $\Sigma \setminus \alpha$. Then there is a mapping class $r \in \MCG(\Sigma)$ such that after an isotopy $r(\alpha_+) = \alpha_-$, $r(\alpha_-) = \alpha_+$, and $r_{|\alpha}$ is orientation-reversing.
\end{lemma}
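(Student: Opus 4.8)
The plan is to construct $r$ by hand as a composition of two natural symmetries of $\Sigma = \rea^2 \setminus C$: one that swaps the two sides of $\alpha$ and reverses its orientation, and one that corrects the resulting action on the Cantor set. First I would set up the local picture exactly as in Lemma \ref{rotation}: inside a tubular (annular) neighborhood $N$ of the loop $\alpha$, define $r$ to be the rotation by $\pi$ about a diameter of the core circle $\alpha$, so that $r$ reverses the orientation of $\alpha$ and interchanges the two boundary circles of $N$, hence (locally) interchanges the side of $N$ in $\alpha_+$ with the side in $\alpha_-$. The key topological input is that $\alpha_+$ and $\alpha_-$ are homeomorphic: each is an open subsurface of the plane with zero genus, a single isolated puncture coming from $\infty$ (for one side) — wait, both sides share $\infty$? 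No: $\alpha$ separates $\Sigma$, and since both ends of $\alpha$ go to $\infty$, the isolated end $\infty$ lies on exactly one side. So the two sides are \emph{not} a priori homeomorphic, and this is the first thing the proof must address.

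Let me therefore reorganize. The loop $\alpha$ cuts $\Sigma$ into two pieces; after filling in the boundary each is a planar surface, one carrying the isolated end $\infty$ together with a nonempty clopen piece of the Cantor set, the other carrying only a nonempty clopen piece of the Cantor set (again necessarily a Cantor set, by the argument in Lemma \ref{cantor_trans}). These are \emph{not} homeomorphic, so a naive rotation does not extend. The fix is to recall that what we actually want is a mapping class of $\Sigma$ that swaps the two \emph{sides of the curve $\alpha$ as subsurfaces-with-boundary after a homeomorphism} — i.e.\ the correct statement is that $r(\alpha) = \alpha$ with $r$ orientation-reversing on $\alpha$, $r(\alpha_+) = \alpha_-$, $r(\alpha_-) = \alpha_+$, which forces $r$ to move the end $\infty$. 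But $\infty$ is the unique isolated end, hence $\MCG(\Sigma)$-invariant; no mapping class can move it. So I expect the intended reading is the weaker one already used implicitly in Lemma \ref{cantor_rot}'s statement in the source: "$r(\alpha_+) = \alpha_-$" holds only \emph{up to isotopy in $\Sigma$}, which is possible because $\alpha$ is not isotopic to a fixed curve but a whole isotopy class — concretely, there is a homeomorphism of $\Sigma$ fixing $\infty$, sending $\alpha$ to itself with reversed orientation, and swapping the two \emph{complementary Cantor pieces up to re-choosing the curve}. Hmm — this is exactly the subtle point.

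So the main obstacle, and the step I would spend the most care on, is precisely pinning down what "$r(\alpha_+) = \alpha_-$" can mean given that $\infty$ is invariant. The workable route: build $r$ so that it fixes $\alpha$ setwise, acts on $\alpha$ by an orientation-reversing involution, fixes the end $\infty$, and on the two sides acts by \emph{any} homeomorphism compatible with the boundary reversal — using that the side \emph{not} containing $\infty$ (a Cantor-punctured disk) has a reflection involution, and the side containing $\infty$ also has such a reflection since it is a twice-punctured-type planar surface with end set $\{\infty\} \sqcup (\text{Cantor set})$, which is homeomorphic to itself via a reflection fixing $\infty$. Concretely: pick the reflection of the plane across a line meeting $\alpha$ in two points and passing through $\infty$; after an isotopy of $C$ this reflection preserves $C$ (since any two Cantor subsets of the plane are ambiently equivalent, and we may isotope $C$ to be symmetric across this line) and hence descends to $r \in \MCG(\Sigma)$. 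This $r$ reverses the orientation of $\alpha$, fixes $\infty$, and swaps the two half-planes, i.e.\ swaps $\alpha_+$ and $\alpha_-$ in the only sense compatible with $\infty$ being fixed — namely it is the literal geometric reflection. Once $C$ is arranged symmetrically this is immediate; the only work is the standard fact that any embedded Cantor set in $\rea^2$ can be isotoped to one symmetric across a given line, which follows from the uniqueness of the embedded Cantor set up to ambient homeomorphism. I would present $r$ as this reflection and verify the three required properties directly.
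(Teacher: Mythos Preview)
Your central claim---that one component of $\Sigma \setminus \alpha$ carries the isolated end $\infty$ and the other does not, so that the two sides are not homeomorphic---is wrong, and this error drives the rest of the proposal off course. Since both ends of the loop $\alpha$ converge to $\infty$, in the one-point compactification $S^2 \setminus C$ the loop closes up to a simple closed curve $\alpha \cup \{\infty\}$; the point $\infty$ lies \emph{on} this curve, not in either complementary region. Each component $\alpha_\pm$ is therefore an open disk minus a nonempty clopen piece of $C$, i.e.\ an open disk minus a Cantor set, and the two sides are homeomorphic. Equivalently: cutting along the proper arc $\alpha$ splits the end $\infty$ into two isolated ends, one in each component. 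There is no obstruction of the kind you describe, and no need to reinterpret the statement.

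This is exactly what the paper's proof uses. A tubular neighborhood of $\alpha$ in $\Sigma$ is a once-punctured annulus (the puncture is $\infty$); the map $r$ is the rotation by $\pi$ about the diameter of the core circle passing through that puncture, which fixes $\infty$, reverses the orientation of $\alpha$, and interchanges the two boundary circles. Because the two complementary pieces are homeomorphic, $r$ extends over all of $\Sigma$.

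Separately, your final construction does not yield an element of $\MCG(\Sigma)$: a reflection of the plane is orientation-reversing, while $\MCG(\Sigma) = \pi_0(\Homeo^+(\Sigma))$. Moreover, no single planar reflection simultaneously swaps the two sides of $\alpha$ \emph{and} reverses the orientation of $\alpha$: reflection in $\alpha$ swaps the sides but fixes $\alpha$ pointwise, while reflection in a transverse line reverses $\alpha$ but preserves each side. The map that does both is the rotation by $\pi$ about a point of $\alpha$, which is orientation-preserving---and is precisely the paper's $r$.
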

\begin{proof}
	In a tubular neighborhood of $\alpha$, which is a punctured annulus, $r$ is just a rotation by $\pi$ about the line running down the middle of that punctured annulus, as in Figure \ref{rot_fig}. Since $\alpha$ separates the end space of $\Sigma$ into two nonempty clopen sets, the end spaces of $\alpha_-$ and $\alpha_+$ are homeomorphic and so this $r$ can be extended to all of $\Sigma$.
\end{proof}

\begin{lemma}\label{cantor_shift}
	Let $\alpha$ be a loop on $\Sigma$ and $V$ an identity neighborhood in $\MCG(\Sigma)$. Let $\alpha_-$ and $\alpha_+$ be the two components of $\Sigma \setminus \alpha$. Then there are mapping classes $h_+, h_- \in \MCG(\Sigma)$ such that $V_{\alpha_-} \subseteq h_+^{-1} V h_+$ and $V_{\alpha_+} \subseteq h_-^{-1} V h_-$. In addition, $h_+(\alpha) \subseteq \alpha_+$ and $h_-(\alpha) \subseteq \alpha_-$. 
\end{lemma}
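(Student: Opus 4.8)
The plan is to mirror the proof of Lemma~\ref{shift_enough}, replacing the power of the translation (which is unavailable since $\Sigma$ is not translatable) by a homeomorphism built by hand that pushes $\alpha$ into $\alpha_+$ (resp.\ $\alpha_-$). I will produce $h_+$ with $h_+(\alpha)\subseteq\alpha_+$ and $V_{\alpha_-}\subseteq h_+^{-1}Vh_+$; the map $h_-$ is obtained by exchanging the roles of $\alpha_+$ and $\alpha_-$. As in Lemma~\ref{shift_enough}, fix a finite-type subsurface $T\subseteq\Sigma$ with $V_T\subseteq V$, and, enlarging $T$, assume it is connected, has essential boundary, and contains $\alpha$ in its interior (so its only end is $\infty$, and all complementary components have Cantor end spaces). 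It then suffices to find $h_+\in\MCG(\Sigma)$ with $h_+(\alpha)\subseteq\alpha_+$ and $T\subseteq h_+(\alpha_-)$, where $h_+(\alpha_-)$ denotes the component of $\Sigma\setminus h_+(\alpha)$ that is the image of $\alpha_-$: then $h_+V_{\alpha_-}h_+^{-1}=V_{h_+(\alpha_-)}\subseteq V_T\subseteq V$.

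The structural input, used exactly as in the proof of Lemma~\ref{cantor_trans}, is that every loop on $\Sigma$ separates it into two genus-zero pieces, each with a single boundary line and with end space equal to one isolated end (from $\infty$) together with a nonempty Cantor set; by the classification of Richards \cite{richards} such a piece is determined up to homeomorphism, so it will be enough to exhibit an appropriate loop $\beta$ to play the role of $h_+(\alpha)$. Writing $T_+=T\cap\alpha_+$, a finite-type subsurface of $\alpha_+$ having $\alpha$ as a boundary line, the components of $\Sigma\setminus T$ contained in $\alpha_+$ have Cantor end spaces that partition $C_+:=E(\alpha_+)\setminus\{\infty\}$. I will use the arc-following construction of Lemma~\ref{construct} (Figure~\ref{following_curve}) to build a loop $\beta\subseteq\alpha_+$, disjoint from $T$, which runs out to $\infty$, around $T_+$, and into one of these complementary pieces, so arranged that $T_+$ lies in the component of $\alpha_+\setminus\beta$ adjacent to $\alpha$ while a nonempty clopen (hence Cantor) subset of $C_+$ is separated onto the far side. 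Letting $\beta_-$ be the component of $\Sigma\setminus\beta$ containing $\alpha$, one then checks $T\subseteq\beta_-$: indeed $T\cap\alpha_-\subseteq\alpha_-\subseteq\beta_-$, $T_+\subseteq\beta_-$ by construction, and $\alpha$ itself lies in $\beta_-$.

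Finally, both components of $\Sigma\setminus\beta$ are genus-zero surfaces with one boundary line whose end space is an isolated end together with a nonempty Cantor set, hence homeomorphic respectively to $\alpha_-$ and $\alpha_+$ via homeomorphisms carrying $\beta$ to $\alpha$; gluing these along the boundary lines yields $h_+\in\MCG(\Sigma)$ with $h_+(\alpha)=\beta$ and $h_+(\alpha_-)=\beta_-$. This $h_+$ satisfies $h_+(\alpha)=\beta\subseteq\alpha_+$ and $T\subseteq\beta_-=h_+(\alpha_-)$, which is what was needed. Repeating the argument with $\alpha_+$ and $\alpha_-$ interchanged (routing the loop around $T\cap\alpha_-$ inside $\alpha_-$) produces $h_-$ with $h_-(\alpha)\subseteq\alpha_-$ and $V_{\alpha_+}\subseteq h_-^{-1}Vh_-$.

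The step I expect to require the most care is the construction of $\beta$: because $T$ and every loop $\beta$ both limit onto the single isolated end $\infty$, one cannot take $\beta$ simply to ``enclose'' $T_+$, and instead must route $\beta$ past $T_+$ within a neighborhood of $\infty$ — this is precisely the situation the explicit arc-following picture of Figure~\ref{following_curve} handles — while also taking care to leave Cantor-set ends on both sides of $\beta$, which is what makes the homeomorphism $h_+$ exist. Once $\beta$ is constructed, the remaining steps are formal consequences of Richards' classification and of the description of the topology of $\MCG(\Sigma)$ via the neighborhoods $V_S$.
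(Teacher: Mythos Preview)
Your overall strategy---choose a loop $\beta=h_+(\alpha)$ inside $\alpha_+$ and build $h_+$ from the classification of surfaces---is exactly the paper's, and the gluing step producing $h_+$ is fine. The gap is in the construction of $\beta$, and your final paragraph shows you sense the problem without actually resolving it.

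You ask for a loop $\beta\subseteq\alpha_+$ that is \emph{disjoint from $T$} and satisfies $T\subseteq\beta_-$. This is impossible. Since $T$ contains $\alpha$ in its interior and has only finitely many compact boundary curves, $T$ contains an entire neighborhood of the end $\infty$ in $\Sigma$; every loop $\beta$ limits to $\infty$ and therefore must eventually enter $T$. So $\beta\cap T\ne\emptyset$, and consequently $T\not\subseteq\beta_-$. Your proposed remedy, to ``route $\beta$ past $T_+$ within a neighborhood of $\infty$'', forces $\beta$ to cross $T$, directly contradicting the two requirements on which your chain $V_{\beta_-}\subseteq V_T$ rests. (Figure~\ref{following_curve} does not help: that construction is for simple closed curves, which need not approach $\infty$.)

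The paper supplies precisely the missing idea: one does not demand $T\subseteq\beta_-$, but instead chooses $\beta$ (called $\gamma$ there) so that $T\cap\beta_+$ is a single \emph{disk}---the half-disk that $\beta$ inevitably cuts off from $T$'s neighborhood of $\infty$. One then argues that any homeomorphism fixing $\beta_-$ pointwise can be isotoped to fix all of $T$, since the remaining piece $T\cap\beta_+$ is a disk meeting $\beta_-$ along an arc; hence $V_{\beta_-}\subseteq V_T\subseteq V$ still holds. With this correction your argument goes through and coincides with the paper's.
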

\begin{proof}
	Since the sets $\set{V_S \mid \text{$S \subseteq \Sigma$ has finite type}}$ form a neighborhood basis of the identity in $\MCG(\Sigma)$, there is some finite-type $S \subseteq \Sigma$ such that $V_S \subseteq V$. By growing $S$---and thus shrinking $V_S$---we can ensure that the loop $\alpha$ and its basepoint are included in $S$.

	$S$ is a finite-type surface of genus zero, with $n$ boundary components for some $n$. In particular, it must have at least one boundary component in $\alpha_-$ and at least one boundary component in $\alpha_+$. Pick two new arcs $\beta \subseteq \alpha_-$ and $\gamma \subseteq \alpha_+$ and such that both $\beta_- \cap S$ and $\gamma_+ \cap S$ are disks, as in Figure \ref{cantor_shift_fig}. Using Lemma \ref{cantor_trans}, let $h_+, h_- \in \MCG(\Sigma)$ such that $h_+$ fixes $\beta$ and maps $\alpha$ to $\gamma$, while $h_-$ fixes $\gamma$ and maps $\alpha$ to $\beta$.

	It is not quite true that $S \subseteq h_+(\alpha)_- = \gamma_-$ as in the proof of Lemma \ref{shift_enough}. However, the intersection $S \cap \gamma_+$ is a disk, and so any homeomorphism that restricts to the identity on $\gamma_-$ can be homotoped to one restricting to the identity on $S$, and thus $V_{(h_+(\alpha))_-} = V_{\gamma_-} \subseteq V_S \subseteq V$. It follows that $V_{\alpha_-} \subseteq h_+^{-1} V h_+$ and likewise $V_{\alpha_+} \subseteq h_-^{-1} V h_-$.
\end{proof}

\begin{figure}
	\center\includegraphics[width=\textwidth*2/3]{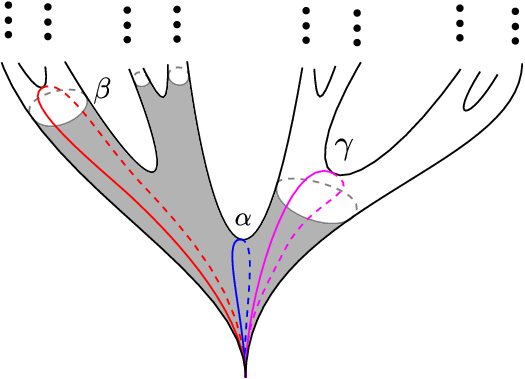}
	\caption{A finite-type subsurface (shaded), with the loop $\alpha$ in blue, the loop $\beta$ in red, and the loop $\gamma$ in magenta.\label{cantor_shift_fig}}
\end{figure}

These are enough ingredients to prove our main theorem for this section:

\begin{theorem}\label{cantor_qi}
	Let $\Sigma = \rea^2 \setminus C$ be the plane minus a Cantor set. Then $\MCG(\Sigma)$ is quasi-isometric to $\loopg(\Sigma)$.
\end{theorem}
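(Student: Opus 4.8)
The plan is to apply the Schwarz--Milnor-type Lemma \ref{ms} to the action of $\MCG(\Sigma)$ on $\loopg(\Sigma)$, just as Theorem \ref{shifty_qi} applied it to the action on $\scg(\Sigma)$. Three of the four hypotheses are essentially free: $\loopg(\Sigma)$ is connected by Bavard \cite{bavard}; $\MCG(\Sigma)$ acts on it by isometries; and the action is transitive on vertices, since cutting $\Sigma$ along any loop produces two genus-zero surfaces whose end spaces are complementary nonempty clopen subsets of the Cantor set $E(\Sigma) \setminus \set{\infty}$ (hence Cantor sets), so any two loops are interchanged by a homeomorphism of $\Sigma$ --- this is the non-relative version of the argument proving Lemma \ref{cantor_trans}. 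So, fixing a loop $\alpha$ as basepoint, everything reduces to showing that $A = \set{f \in \MCG(\Sigma) \mid d(\alpha, f(\alpha)) \le 1}$ is coarsely bounded. Note that $A$ contains $V_N$ for a (finite-type) regular neighborhood $N$ of $\alpha$, so $A$ is an identity neighborhood; hence its coarse boundedness also supplies the remaining hypothesis that $\MCG(\Sigma)$ be locally coarsely bounded.

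To prove $A$ is coarsely bounded I would use Fact \ref{cb_nbhd} (recall $\MCG(\Sigma)$ is Polish): given an identity neighborhood $V$, I must produce a finite set $F$ and a constant $k$ with $A \subseteq (FV)^k$. Apply Lemma \ref{cantor_shift} to get $h_+, h_-$ with $V_{\alpha_-} \subseteq h_+^{-1}Vh_+$, $V_{\alpha_+} \subseteq h_-^{-1}Vh_-$, $h_+(\alpha) \subseteq \alpha_+$, $h_-(\alpha) \subseteq \alpha_-$; let $r$ be the involution of Lemma \ref{cantor_rot}; fix once and for all a loop $\alpha' \subseteq \alpha_+$ disjoint from $\alpha$ and a map $h_0 \in \MCG(\Sigma)$ with $h_0(\alpha) = \alpha'$ (by transitivity); and set $F = \set{r^{\pm 1}, h_+^{\pm 1}, h_-^{\pm 1}, h_0^{\pm 1}}$. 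First, for $f \in \Stab(\alpha)$ I would argue exactly as in Corollary \ref{stabilizer}: after possibly replacing $f$ by $rf$, the map $f$ preserves each side of $\alpha$ and (being orientation-preserving) may be isotoped to the identity near $\alpha$, hence factors as $f = f_- f_+$ with $f_- \in V_{\alpha_+}$ and $f_+ \in V_{\alpha_-}$ (any power of the Dehn twist about $\alpha$ being absorbed into one factor), so $f \in (FV)^5$ via the two conjugation relations. Next, for $f \in A$ with $f(\alpha) \ne \alpha$, the loop $f(\alpha)$ is disjoint from $\alpha$ and lies in $\alpha_+$ or $\alpha_-$; after possibly replacing $f$ by $rf$ we may assume $f(\alpha) \subseteq \alpha_+$. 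Then Lemma \ref{cantor_trans}, applied to the loops $\alpha$, $\alpha'$, $f(\alpha)$ (the latter two both in $\alpha_+$), yields $g \in V_{\alpha_-}$ with $g(\alpha') = f(\alpha)$, so that $f_0 := h_0^{-1}g^{-1}f$ fixes $\alpha$ and thus $f = g h_0 f_0$ with $g \in V_{\alpha_-} \subseteq (FV)^2$, $h_0 \in F$, and $f_0 \in (FV)^5$; altogether $f \in (FV)^9$. Hence $A \subseteq (FV)^9$, so $A$ is coarsely bounded.

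Given this, Lemma \ref{ms} immediately shows the orbit map $f \mapsto f(\alpha)$ is a quasi-isometry $\MCG(\Sigma) \to \loopg(\Sigma)$, which is the claim. The only step that needs genuine care --- the rest being bookkeeping with word length --- is the coarse boundedness of $A$, and inside it the treatment of $\Stab(\alpha)$: one must check that a mapping class fixing $\alpha$ and each complementary side really is isotopic to one that is the identity on a neighborhood of $\alpha$, so that it splits as a product of a map supported in $\alpha_+$ and one supported in $\alpha_-$. This is the loop analogue of the corresponding point in Corollary \ref{stabilizer}; the extreme homogeneity of $\rea^2 \setminus C$, encoded in the strong transitivity of Lemma \ref{cantor_trans}, is precisely what lets this argument be far shorter than the proof of Lemma \ref{dist_one} in the general translatable case, as there is no need here to build a combinatorial family of model subsurfaces.
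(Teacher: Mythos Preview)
Your proposal is correct and follows essentially the same approach as the paper: apply Lemma \ref{ms} using connectedness (Bavard), transitivity (Lemma \ref{cantor_trans}), and coarse boundedness of $A$ via Fact \ref{cb_nbhd} together with Lemmas \ref{cantor_rot} and \ref{cantor_shift}, handling the $d=0$ and $d=1$ cases separately. The only cosmetic differences are that the paper reuses $h_+(\alpha)$ as the reference loop in $\alpha_+$ (rather than introducing a separate $h_0$ and $\alpha'$) and swaps $h_+/h_-$ rather than applying $r$ in the $d=1$ case, yielding $(FV)^8$ instead of your $(FV)^9$; your explicit remark that $A$ is itself an identity neighborhood, and hence supplies local coarse boundedness, is a point the paper leaves implicit.
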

\begin{proof}
	The loop graph is known to be connected by work of Bavard \cite{bavard}, and the action of $\MCG(\Sigma)$ on it is transitive by Lemma \ref{cantor_trans}. To apply Lemma \ref{ms} it remains to show that for $\alpha$ a loop on $\Sigma$, the set $A = \set{f \in \MCG(\Sigma) \mid d(\alpha, f(\alpha)) \le 1}$ is coarsely bounded. Fix such an $\alpha$, and refer to the components of $\Sigma \setminus \alpha$ as $\alpha_+$ and $\alpha_-$.

	We will of course be using Fact \ref{cb_nbhd}. Fix an identity neighborhood $V$ in $\MCG(\Sigma)$, and let $r$ and $h$ be as in Lemmas \ref{cantor_rot} and \ref{cantor_shift}. Let $F = \set{r^{-1},\allowbreak h_+,\allowbreak h_-,\allowbreak h_+^{-1},\allowbreak h_-^{-1}}$. We will show that $A \subseteq (FV)^8$.

	Fix $f \in A$. First consider the case where $d(\alpha, f(\alpha)) = 0$. After possibly replacing $f$ with $rf$, we may assume $f$ restricts to the identity on $\alpha$, and so it decomposes as $f = f_- f_+$, where $f_- \in V_{\alpha_-}$ and $f_+ \in V_{\alpha_+}$. Then $f = f_- f_+ \in V_{\alpha_-} V_{\alpha_+} \subseteq h_+^{-1} V h_+ h_-^{-1} V h_- \subseteq (FV)^4$. Since we may have replaced $f$ with $rf$, this gives $f \in (FV)^5$ in general when $d(\alpha, f(\alpha))= 0$.

	Now suppose $d(\alpha, f(\alpha)) = 1$. That means $\alpha$ and $f(\alpha)$ are disjoint. Without loss of generality we assume that $f(\alpha) \subseteq \alpha_+$; if not then we need merely replace $h_+$ with $h_-$ below. By Lemma \ref{cantor_trans} there is some $g \in V_{\alpha_-}$ such that $g(\alpha) = \alpha$ and $g(h_+(\alpha)) = f(\alpha)$. Let $f_0 = h_+^{-1} g^{-1} f$. By construction $f_0(\alpha) = \alpha$ so by the previous paragraph $f_0 \in (FV)^5$. Then $f = g h_+ f_0 \in V_{\alpha_-} h_+ (FV)^5 \subseteq h_+^{-1} V h_+ h_+ (FV)^5 \subseteq (FV)^8$.

	Thus $A \subseteq (FV)^8$, so $A$ is coarsely bounded, and then by Lemma \ref{ms} the action of $\MCG(\Sigma)$ on $\loopg(\Sigma)$ induces a quasi-isometry.
\end{proof}

\subsection{Some consequences of this quasi-isometry}

Theorem \ref{cantor_qi} has some interesting immediate consequences. The first is hyperbolicity; as mentioned in the introduction, $\loopg(\Sigma)$ is known to be $\delta$-hyperbolic.

\begin{cor}\label{hyp}
	Let $\Sigma = \rea^2 \setminus C$. Then $\MCG(\Sigma)$ is non-elementary $\delta$-hyperbolic.
\end{cor}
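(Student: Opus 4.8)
The plan is to derive the corollary directly from Theorem~\ref{cantor_qi} together with the known geometry of the loop graph, using the fact that Gromov hyperbolicity and the Gromov boundary are quasi-isometry invariants of geodesic spaces. First I would recall Bavard's results on $\loopg(\Sigma)$ for $\Sigma=\rea^2\setminus C$ from \cite{bavard}: this graph is connected, has infinite diameter, and is $\delta$-hyperbolic. Being a connected graph, $\loopg(\Sigma)$ is a geodesic metric space; and by Fact~\ref{qi_gen} the quasi-isometry type of $\MCG(\Sigma)$ is represented by the (connected, hence geodesic) Cayley graph with respect to a coarsely bounded generating set. Theorem~\ref{cantor_qi} gives a quasi-isometry between these two geodesic spaces, and since hyperbolicity passes across quasi-isometries of geodesic spaces, $\MCG(\Sigma)$ is $\delta$-hyperbolic; in particular, as $\loopg(\Sigma)$ has infinite diameter, $\MCG(\Sigma)$ is unbounded.

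It remains to check that $\MCG(\Sigma)$ is \emph{non-elementary}, i.e.\ that its Gromov boundary contains at least three points (this excludes the elementary cases of a bounded space and of a space quasi-isometric to a line, whose boundaries have zero and two points respectively). A quasi-isometry of geodesic hyperbolic spaces induces a homeomorphism of Gromov boundaries, so $\partial\MCG(\Sigma)\cong\partial\loopg(\Sigma)$, and it suffices to know that the loop graph has at least three boundary points. This again follows from \cite{bavard}: $\MCG(\Sigma)$ acts on $\loopg(\Sigma)$ by isometries with independent loxodromic elements (indeed the action is non-elementary, which is how Bavard produces an infinite-dimensional space of quasimorphisms on $\MCG(\Sigma)$), and the attracting and repelling fixed points of two such independent loxodromics already give four distinct points of $\partial\loopg(\Sigma)$. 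Hence $\partial\MCG(\Sigma)$ is infinite and $\MCG(\Sigma)$ is non-elementary $\delta$-hyperbolic.

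There is no real obstacle here beyond assembling the pieces: the corollary is a formal consequence of Theorem~\ref{cantor_qi} once one imports (a) hyperbolicity and infinite diameter of $\loopg(\Sigma)$ and (b) non-elementarity of the $\MCG(\Sigma)$-action on it, both from \cite{bavard}. The one point requiring a little care is that the quasi-isometry of Theorem~\ref{cantor_qi} need not be $\MCG(\Sigma)$-equivariant, so I would phrase non-elementarity purely in terms of the Gromov boundary — a genuine quasi-isometry invariant — transporting the "at least three boundary points" statement from $\loopg(\Sigma)$ to $\MCG(\Sigma)$ and only then concluding, rather than trying to argue directly about loxodromic mapping classes acting on the group's own geometry.
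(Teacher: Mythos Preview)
Your argument is correct and follows the same route as the paper: invoke Theorem~\ref{cantor_qi} and Bavard's result that $\loopg(\Sigma)$ is non-elementary $\delta$-hyperbolic, then transport these properties across the quasi-isometry. The paper's proof is a one-liner that cites Bavard directly, whereas you have unpacked why non-elementarity is a quasi-isometry invariant via the Gromov boundary; this extra care is fine but not strictly needed.
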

\begin{proof}
	The mapping class group is quasi-isometric to the loop graph, which was shown by Bavard \cite{bavard} to be non-elementary $\delta$-hyperbolic.
\end{proof}

Since the translatable surfaces are known to have non-hyperbolic mapping class groups, this proves that the mapping class groups are not quasi-isometric.

\begin{cor}\label{distinct}
	The mapping class group of $\rea^2 \setminus C$ is not quasi-isometric to that of any translatable surface.
\end{cor}
\begin{proof}
	The translatable curve graph is never non-elementary hyperbolic by the results of Horbez, Qing, and Rafi \cite{hqr}, and thus neither is the mapping class group of any translatable surface. Thus by Corollary \ref{hyp} the mapping class group of a translatable surface is not quasi-isometric to that of $\rea^2 \setminus C$.
\end{proof}

For the final interesting consequence, we introduce some concepts from the world of locally compact groups. A generating set $S$ for a group $G$ can be thought of as a homomorphism $\phi \colon F_S \to G$ from the free group on the set $S$ to $G$. A collection of words $R \subseteq F_S$ that normally generates the kernel of this map is called a \emph{set of relators} and we often write $G$ as a \emph{group presentation} $G = \langle S \mid R \rangle$. When the sets $S$ and $R$ are both finite, we say the group $G$ is \emph{finitely presented}. Cornulier and de la Harpe \cite{cdlh} introduce the following generalization of this notion.

\begin{defn}
	A group presentation $G = \langle S \mid R \rangle$ is a \emph{bounded presentation} if the words in $R$ have bounded length. In this case we say $G$ is \emph{boundedly presented} over the set $S$.
\end{defn}

Note that a finite presentation is simply a bounded presentation over a finite generating set. Cornulier and de la Harpe call a group \emph{compactly presented} if it has a bounded presentation over a compact generating set, and by analogy we might call a group \emph{coarse-boundedly presented} if it has a bounded presentation over a coarsely bounded generating set. Crucially, Cornulier and de la Harpe show the following close relationship between bounded presentations and word metrics.

\begin{fact}[Proposition 7.B.1 of \cite{cdlh}]\label{rips}
	Let $G$ be a group endowed with a generating set $S$. Then $G$ is boundedly presented over $S$ if and only if the Rips complex $\mathrm{Rips}_c(G, d_S)$ is simply connected for some $c$.
\end{fact}

It follows directly that the mapping class group of the plane minus a Cantor set has a coarsely bounded presentation.

\begin{cor}\label{cbp}
	Let $\Sigma = \rea^2 \setminus C$ be the plane minus a Cantor set. Then $\MCG(\Sigma)$ has a coarsely bounded presentation.
\end{cor}
\begin{proof}
	By Corollary \ref{hyp}, the mapping class group $\MCG(\Sigma)$ is $\delta$-hyperbolic with respect to (any) coarsely bounded generating set $S$. Then by a result credited to Rips by Gromov (Theorem 1.7.A of \cite{gromov}), when $c > 4\delta$ the Rips complex $\mathrm{Rips}_c(G, d_S)$ is contractible, and so by Fact \ref{rips}, the mapping class group $\MCG(\Sigma)$ has a bounded presentation over $S$.
\end{proof}

\bibliographystyle{halpha}
\bibliography{refs}{}

\end{document}